\def\co{\colon\thinspace} 
\newcommand{\Z}{\mathbb{Z}}
\newcommand{\CC}{\mathbb C}
\newcommand{\PO}{\operatorname{PO}}
\newcommand{\PGL}{\operatorname{PGL}}
\newcommand{\Lag}{{\mathrm{Lag}}}
\newcommand{\RR}{\mathbb R}
\newcommand{\Hom}{\operatorname{Hom}}
\newcommand{\Isom}{\operatorname{Isom}}
\newcommand{\CP}{\mathbb{CP}}
\newcommand{\KP}{\mathbb{KP}}
\def\co{\colon\thinspace}
\newcommand{\C}{{\mathbb C}}
\newcommand{\HH}{{\mathbb H}}
\newcommand{\K}{\mathbb{K}}
\newcommand{\N}{{\mathbb N}}
\newcommand{\R}{{\mathbb R}}
\newcommand{\bS}{{\mathbb S}}
\newcommand{\Sp}{{\mathrm{Sp}}}
\newcommand{\SL}{\mathrm{SL}}
\newcommand{\PSL}{\mathrm{PSL}}
\newcommand{\SO}{\mathrm{SO}}
\newcommand{\PU}{\mathrm{PU}}
\newcommand{\PSp}{\mathrm{PSp}}
\newcommand{\OG}{\mathrm{O}}
\def\co{\colon\thinspace}
\DeclareMathOperator{\Stab}{Stab}
\newcounter{notes}
\newcommand{\PP}{\mathbb{P}}
\newcommand{\PSU}{\mathrm{PSU}}
\newcommand{\ZZ}{\mathbb{Z}}
\newtheorem{Theorem}{Theorem}[section]
\newtheorem{Lemma}[Theorem]{Lemma}
\newtheorem{Proposition}[Theorem]{Proposition}
\newtheorem{Corollary}[Theorem]{Corollary}
\newtheorem{introthm}{Theorem}
\newtheorem{introlem}[introthm]{Lemma}
\newtheorem{introcor}[introthm]{Corollary}
\newtheorem{Definition}[Theorem]{Definition}
\newtheorem{Claim}[Theorem]{Claim}
\theoremstyle{remark}
\newtheorem{Remark}[Theorem]{Remark}
\newtheorem{Example}[Theorem]{Example}
\let\oldtocsection=\tocsection
\let\oldtocsubsection=\tocsubsection
\let\oldtocsubsubsection=\tocsubsubsection
\renewcommand{\tocsection}[2]{\hspace{0em}\oldtocsection{#1}{#2}}
\renewcommand{\tocsubsection}[2]{\hspace{1em}\oldtocsubsection{#1}{#2}}
\renewcommand{\tocsubsubsection}[2]{\hspace{2em}\oldtocsubsubsection{#1}{#2}}
\begin{document}


\title{Fiber bundles associated with Anosov representations}

\author[D. Alessandrini]{Daniele Alessandrini}
\address{Department of Mathematics, Columbia University}
\email{ daniele.alessandrini@gmail.com}
\urladdr{https://math.columbia.edu/$\sim$alessandrini}

\author[S. Maloni]{Sara Maloni}
\address{Department of Mathematics, University of Virginia}
\email{sm4cw@virginia.edu}
\urladdr{sites.google.com/view/sara-maloni/}

\author[N. Tholozan]{Nicolas Tholozan}
\address{D\'epartement de Math\'ematiques et Applications, \'{E}cole Normale Sup\'erieure -- PSL}
\email{nicolas.tholozan@ens.fr} 
\urladdr{https://www.math.ens.fr/$\sim$tholozan/}

\author[A. Wienhard]{Anna Wienhard}
\address{Max Planck Institute for Mathematics in the Sciences, Inselstr. 22, 04103 Leipzig, 
Germany}
\email{anna.wienhard@mis.mpg.de}
\urladdr{https://www.mathi.uni-heidelberg.de/$\sim$wienhard}

\thanks{We acknowledge support from U.S. National Science Foundation grants DMS 1107452, 1107263, 1107367 ``RNMS: GEometric structures And Representation varieties'' (the GEAR Network).  SM was partially supported by U.S. National Science Foundation grants DMS-1848346 (NSF CAREER). AW is supported by the European Research Council under ERC-Advanced Grant 101018839 and by the Deutsche Forschungsgemeinschaft (DFG, German Research Foundation) - Project-ID 281071066 - TRR 191. She thanks the Institute for Advanced Study for its hospitality when this work was completed.} 

\date{\today (v0)}

\begin{abstract}
Anosov representations of hyperbolic groups form a rich class of representations that are closely related to geometric structures on closed manifolds. Any Anosov representation $\rho:\Gamma \to G$ admits cocompact domains of discontinuity in flag varieties $G/Q$ \cite{guichard-wienhard:anosov, KLP2} endowing the compact quotient manifolds $M_\rho$ with a $(G,G/Q)$-structure. In general the topology of $M_\rho$ can be quite complicated. 

In this article, we will focus on the special case when $\Gamma$ is a the fundamental group of a closed (complex) hyperbolic manifold $N$ and $\rho$ is a deformation of a (twisted) lattice embedding $\Gamma \to \mathrm{Isom}^\circ (\mathbb{H}_{\mathbb{K}}) \to G$ through Anosov representations. In this case, we prove that $M_\rho$ is a smooth fiber bundle over $N$, we describe the structure group of this bundle and compute its invariants. This theorem applies in particular to most representations in higher rank Teichm\"uller spaces,  as well as convex divisible representations,  AdS-quasi-Fuchsian representations and $\mathbb{H}_{p,q}$-convex cocompact representations. 

Even when $M_\rho \to N$  is a fiber bundle, it is often very difficult to determine the fiber. In the second part of the paper  we focus on the special case when $N$ is a surface, $\rho$ a quasi-Hitchin representation into $\mathrm{Sp}(4,\C)$, and $M_\rho$ carries a $(\mathrm{Sp}(4,\C),\mathrm{Lag}(\C^4))$-structure. We show that in this case the fiber is homeomorphic to  $\C\mathbb{P}^2 \# \overline{\C\mathbb{P}}^2$. 

This fiber bundle $M_\rho \to N$ is of particular interest in the context of possible generalizations of Bers' double uniformization theorem in the context of higher rank Teichm\"uller spaces,  since for Hitchin-representations it contains two copies of the locally symmetric space associated to $\rho(\Gamma)$. Our result uses the classification of smooth $4$--manifolds, the study of the $\SL(2, \C)$--orbits of $\mathrm{Lag}(\C^4)$ and the identification of $\mathrm{Lag}(\C^4)$ with the space of (unlabelled) regular ideal hyperbolic tetrahedra and their degenerations. 
\end{abstract}

\maketitle

\tableofcontents

\section{Introduction} 
A $(G,X)$-manifold is a topological manifold that is locally modelled on a $G$-homogeneous space $X$. This means that the manifold is equipped with local charts with values in a model space $X$ and transition functions with values in a Lie group $G$ acting transitively on $X$. $(G,X)$-manifolds play an important role in Thurston's geometrization program, but extend far beyond it. In particular $(G,X)$-structures capture many geometric structures beyond Riemannian metrics, for example projective or affine structures on manifolds.
 
The simplest examples of $(G,X)$-manifolds are quotients of $X$ by a discrete subgroup of $G$ acting freely and properly discontinuously -- the \emph{complete} $(G,X)$-manifolds. A bigger class of examples arise more generally from quotients of an open domain $\Omega$ of $X$. Theses are sometimes called \emph{Kleinian} $(G,X)$-manifolds.

%

The terminology \emph{Kleinian} comes from the theory of \emph{Kleinian groups}. A Kleinian group $\Gamma$ is a (non elementary) discrete subgroup of isometries of the hyperbolic $3$-space $\HH^3$. Its action on the boundary at infinity $\partial_\infty \HH^3 \simeq \mathbb{CP}^1$ has a minimal invariant \emph{limit set} $\Lambda_\Gamma$ and is properly discontinuous on the complement $\Omega_\Gamma$, called the domain of discontinuity. The quotient $\Gamma \backslash \Omega_\Gamma$ is (at least when $\Gamma$ is torsion-free) a Riemann surface equipped with a \emph{complex projective structure}.
 The group $\Gamma$ is \emph{convex-cocompact} when the action of $\Gamma$ on $\HH^3 \sqcup \Omega_\Gamma$ is cocompact. The quotient then gives a conformal compactification of the hyperbolic $3$-manifold $\Gamma \backslash \HH^3$. Convex-cocompact Kleinian groups and the corresponding hyperbolic $3$-manifolds play a central role in Thurston's hyperbolization theorem.
 
In recent years, the theory of convex-cocompact subgroups in rank one Lie groups has been generalized to the higher rank setting through the theory of Anosov representations, introduced by Labourie \cite{labourie-anosov} and, more generally, by Guichard-Wienhard \cite{guichard-wienhard:anosov}. A strong connection between Ansosov representations and $(G,X)$-manifolds has been established through the construction of domains of discontinuity by Guichard--Wienhard \cite{guichard-wienhard:anosov} and then by Kapovich--Leeb--Porti \cite{KLP2}. 
%

Let $G$ be a semisimple Lie group and $P$ a parabolic subgroup of $G$. Informally, a \emph{$P$-Anosov representation} of a Gromov hyperbolic group $\Gamma$ into $G$ is a homomorphism $\rho:\Gamma \to G$ that admits a $\rho$-equivariant continuous embedding of the boundary at infinity $\partial_\infty \Gamma$ of $\Gamma$ into the flag variety $G/P$, which preserves the dynamics of the action of $\Gamma$ on its boundary (see Definition \ref{def - Anosov Representation}). $P$-Anosov representations are quasi-isometric embeddings and are stable under small perturbations. Guichard--Wienhard proved in \cite{guichard-wienhard:anosov} that, for some parabolic subgroup $Q$ (possibly different from $P$), a $P$-Anosov representation $\rho: \Gamma \to G$ defines a properly discontinuous and cocompact action of $\Gamma$ on an open subset $\Omega_\rho \subset G/Q$, which is the complement of a disjoint union of Schubert subvarieties parametrized by $\partial_\infty \Gamma$ (see Definition \ref{thm - Domains of discontinuity}). The precise parabolic subgroups $Q$ for which this construction works have been systematically described by Kapovich--Leeb--Porti \cite{KLP2}. We call the domains of discontinuity $\Omega_\rho$ obtained by their constructions \emph{flag domains of discontinuity}.

This associates to a torsion-free Anosov representation $\rho$ a closed manifold $M_\rho= \Gamma \backslash \Omega_\rho$ equipped with a Kleinian $(G,G/Q)$-structure. Even though the topological type of $M_\rho$ only depends on the connected component of $\rho$ in the space of $P$-Anosov representations \cite{guichard-wienhard:anosov}, it is very difficult to determine the topology of $M_\rho$. 
%
Even in the case of convex-cocompact subgroups of a hyperbolic space of dimension $\geq 4$, the topology of $M_\rho$  is still very mysterious, and many wild phenomena can occur (see Section \ref{subsec:GromovLawsonThurston} for an example with a surface group), suggesting that a systematic answer is impossible. Nonetheless, important classes of examples of Anosov representations arise from deformations of uniform lattices in Lie groups of real rank $1$ into a higher rank Lie group. For such representations, the topology of $M_\rho$ becomes more tractable.

\subsection*{Part 1: Deformations of rank 1 lattices}

In the first part of the paper, we prove a general fibration theorem for the quotients of flag domains of discontinuity associated to Anosov deformations of a rank $1$ lattice into a higher rank Lie group.

Let $H$ be a connected semisimple Lie group of real rank $1$ with finite center, and $\Gamma$ a torsion-free uniform lattice in $H$. Denote by $\rho_0\co\Gamma \to H$ the identity representation. Let $G$ be a connected semisimple Lie group with finite center, and $\iota\co H \to G$ a representation. Then $\iota \circ \rho_0$ is a $P$-Anosov representation of $\Gamma$ in $G$ for some parabolic subgroup $P$: we will call $\iota \circ\rho_0$ an $\iota$-lattice representation. The set $\mathrm{Anosov}_{P}(\Gamma,G)$ of $P$-Anosov representations of $\Gamma$ into $G$ is an open subset of $\Hom(\Gamma, G)$ containing the $\iota$-lattice representation $\iota\circ\rho_0$. We call a representation $\rho:\Gamma \to G$ a \emph{$P$-Anosov deformation of $\iota\circ\rho_0$} if $\rho$ belongs to the connected component of $\iota\circ\rho_0$ in $\mathrm{Anosov}_P(\Gamma,G)$.

Let us now fix any parabolic subgroup $Q$ of $G$ such that $P$-Anosov representations of $G$ admit a cocompact flag domain of discontinuity in $G/Q$. For $\rho \in \Hom_{P}(\Gamma,G)$, we denote the domain by $\Omega_\rho \subset G/Q$  and the closed quotient manifold by $M_\rho = \rho(\Gamma) \backslash \Omega_\rho$. Finally, we denote by $S_H$ the symmetric space of $H$. Our first main result is the following:

\begin{introthm}[see Theorem \ref{thm:TopologyQuotient}]\label{thm_intro_main}
For $H,\Gamma,G,\iota,P,Q$ as above, let $\rho:\Gamma \rightarrow G$ be a $P$-Anosov deformation of $\iota\circ\rho_0$. Then $\Omega_\rho$ admits a smooth $\Gamma$-equivariant fibration onto $S_H$. In particular, $M_\rho$ is a smooth fiber bundle over the negatively curved locally symmetric space $\Gamma \backslash S_H$, and $\Omega_\rho$ deformation retracts to a closed manifold of dimension $\dim(G/Q)-\dim(S_H)$.
\end{introthm}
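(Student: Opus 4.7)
The proof naturally splits into two stages: first establish the fibration in the \emph{model lattice case} $\rho = \iota\circ\rho_0$, and then propagate the result to an arbitrary $P$-Anosov deformation by a parametric continuity argument.

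\textbf{Step 1 (the model lattice case).} For $\rho = \iota\circ\rho_0$, the Lie group $H$ itself acts on $\Omega := \Omega_{\iota\circ\rho_0}\subset G/Q$ through $\iota$, and I would first verify that this action is \emph{proper and cocompact}. Cocompactness is automatic since $\Gamma\subset H$ already acts cocompactly, and properness follows from the properness of the $\Gamma$-action combined with the compactness of $\Gamma\backslash H$, by a Cartan-type decomposition of divergent sequences in $H$ as a bounded term times a sequence diverging in $\Gamma$. Since $H$ has real rank one, any point stabilizer is a compact subgroup of $H$, hence conjugate into the maximal compact $K$. I would then construct an $H$-equivariant smooth fibration $\pi_0:\Omega\to H/K=S_H$ geometrically as follows: fix a maximal compact subgroup $K_G\subset G$ with $\iota(K)\subset K_G$, giving a totally geodesic, $H$-equivariant embedding $\iota_\ast: S_H\hookrightarrow S_G=G/K_G$ whose image admits an $H$-equivariant orthogonal projection $S_G\to \iota_\ast(S_H)$. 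Produce any $H$-equivariant smooth map $\Omega\to S_G$ (e.g.\ by averaging a smooth $\Gamma$-equivariant map over a partition of unity on the cocompact quotient $\Gamma\backslash H$, using the contractibility and non-positive curvature of $S_G$) and compose with orthogonal projection to obtain $\pi_0$. Being $H$-equivariant onto a homogeneous space, $\pi_0$ is automatically a smooth submersion, and hence a smooth fiber bundle.

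\textbf{Step 2 (propagation to Anosov deformations).} Given a $P$-Anosov deformation $\rho$ of $\iota\circ\rho_0$, fix a continuous path $(\rho_t)_{t\in[0,1]}$ in $\mathrm{Anosov}_P(\Gamma,G)$ with $\rho_0=\iota\circ\rho_0$ and $\rho_1=\rho$. By the stability of Anosov representations, the boundary maps $\xi_{\rho_t}:\partial_\infty\Gamma\to G/P$ vary continuously with $t$, and hence so do the Schubert subvarieties of $G/Q$ whose union is the complement of $\Omega_{\rho_t}$. It follows that
\[
\tilde{\mathcal{E}}=\bigl\{(t,x)\in[0,1]\times G/Q \;\big|\; x\in\Omega_{\rho_t}\bigr\}
\]
is an open submanifold, the projection $\tilde{\mathcal{E}}\to [0,1]$ is a smooth submersion, and $\Gamma$ acts smoothly, freely, properly and fiberwise cocompactly on $\tilde{\mathcal{E}}$ (through $\rho_t$ on each slice). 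Ehresmann's fibration theorem applied to the proper submersion $\Gamma\backslash\tilde{\mathcal{E}}\to [0,1]$ then yields a $\Gamma$-equivariant diffeomorphism $\Phi:\Omega_{\rho_0}\to \Omega_\rho$ intertwining the actions through $\rho_0$ and $\rho_1$. Consequently $\pi := \pi_0\circ\Phi^{-1}:\Omega_\rho\to S_H$ is the desired $\Gamma$-equivariant smooth fibration, and its quotient $M_\rho\to\Gamma\backslash S_H$ is the smooth fiber bundle asserted in the theorem. Since $S_H$ is contractible, the fibration admits a section and $\Omega_\rho$ deformation retracts onto its fiber, a closed manifold of dimension $\dim(G/Q)-\dim S_H$.

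\textbf{Main obstacle.} The delicate step is Step~1: producing an \emph{$H$-equivariant} (not merely $\Gamma$-equivariant) smooth fibration $\pi_0:\Omega\to S_H$. Proper cocompact $H$-actions need not a priori be fiber bundles over $H/K$, since non-principal orbit types could obstruct the existence of a global equivariant projection; this is why the geometric construction via the totally geodesic inclusion $\iota_\ast: S_H\hookrightarrow S_G$ and $K_G$-invariant averaging plays an essential role. Once Step~1 is in place, Step~2 is a fairly standard application of the openness of the Anosov condition together with Ehresmann's theorem.
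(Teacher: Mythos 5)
Your proposal is correct and follows essentially the same architecture as the paper: reduce to the model lattice representation, manufacture an $H$-equivariant map to the symmetric space by averaging a $\Gamma$-equivariant one over $\Gamma\backslash H$ using non-positive curvature, and then observe that an $H$-equivariant submersion to the homogeneous space $H/K$ is automatically a fiber bundle; finally transport the conclusion to the deformation via topological invariance. The differences are minor and cosmetic: the paper (Lemma~\ref{lem:proper action}) averages \emph{directly} into $S_H$ via the barycenter of the pushed-forward Haar measure, instead of first mapping to $S_G$ and then composing with the nearest-point projection onto the totally geodesic copy $\iota_\ast(S_H)\subset S_G$ -- your detour works, but is not needed. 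The paper isolates the bundle-structure statement as a self-contained Lemma~\ref{lem:fiber bundle} (which does not require the fibers to be compact), whereas you invoke submersion-plus-Ehresmann, which does need compactness of the fibers (you get that from cocompactness, so this is fine, but you should say so). For the second step, the paper simply cites the Guichard--Wienhard topological-invariance theorem (Theorem~\ref{thm:ConstantTopology}); your argument with the parametrized family $\tilde{\mathcal{E}}$ and Ehresmann is essentially the standard proof of that cited result. Finally, the phrase ``averaging over a partition of unity'' is imprecise -- what is actually used is the barycenter (center of mass) of $F^x_*\mu$ where $\mu$ is Haar measure on $\Gamma\backslash H$, whose well-definedness and smoothness rest precisely on the strict convexity of the squared-distance function on the non-positively curved target; the paper is careful to prove smoothness of this barycentric average (Lemma~\ref{lem:smooth}), a point you should not wave at with ``partition of unity''.
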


This theorem comes with companion theorems (see Theorem~\ref{thm:BundleStructureQuotient} and Corollary~\ref{cor:TopologyBundleSurfaces} below) that describe the structure group and the invariants of the fiber bundle. 

Let us emphasize that Theorem~\ref{thm_intro_main} applies to every (!) cocompact flag domain of discontinuity. Even for a given representation, and a given flag variety, there can be many different cocompact flag domains of discontinuity (see \cite{Stecker}). 
The theorem further applies when we consider the representation $\rho:\Gamma \rightarrow G \rightarrow G'$ as an Ansosov representation into a larger Lie group, and all the flag domains of discontinuity constructed in flag varieties of $G'$. See \cite{guichard-wienhard:anosov} and \cite{GGKW} for examples of how one can play around with such embeddings into larger groups.

An important source of applications of Theorem~\ref{thm_intro_main} is when an entire connected component of the representation variety $\Hom(\Gamma, G)$ consists of Anosov representations. For fundamental groups of closed surfaces with negative Euler characteristic (which we call \emph{surface groups} from now on), such components are called higher rank Teichm\"uller components, and most of them contain representations that factor through a compact extension of $\PSL(2,\RR)$. Therefore, Theorem~\ref{thm_intro_main} gives a positive answer to \cite[Conjecture~13]{Wienhard_ICM} in most cases, as well as a complete answer to a conjecture by Dumas--Sanders \cite[Conjecture~1.1]{Dumas_Sanders_1}. 
More precisely we have 
\begin{introcor}\label{cor_intro}
Let $\Gamma = \pi_1(\Sigma)$ be a surface group, and $\mathcal{C}\subset \Hom(\Gamma, G)$ be a higher rank Teichm\"uller component that contains a twisted Fuchsian representation. Then for every parabolic subgroup $Q$ and every cocompact flag domain of discontinuity $\Omega \subset G/Q$ the quotient manifold $M = \pi_1(\Sigma) \backslash\Omega$ is homeomorphic to a fiber bundle $M\to \Sigma$. In addition, for all $\rho \in \mathcal C$, the composition $\pi_1(M) \to \pi_1(\Sigma) \overset{\rho}{\to} G$ is the holonomy of a Kleinian $(G,G/Q)$-structure on $M$.
\end{introcor}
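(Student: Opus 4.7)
\medskip

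The plan is to derive the corollary directly from Theorem~\ref{thm_intro_main} after unpacking what it means to lie in a higher rank Teichm\"uller component containing a twisted Fuchsian representation. First I would fix a twisted Fuchsian representation $\rho_{\mathrm{tf}} \in \cC$. By definition $\rho_{\mathrm{tf}} = \iota \circ \rho_0$, where $\rho_0 \co \Gamma \hookrightarrow H$ is a discrete cocompact embedding into a connected semisimple Lie group $H$ of real rank one with finite center, and $\iota \co H \to G$ is a homomorphism. For $\Gamma = \pi_1(\Sigma)$ a surface group, $H$ may be taken of the form $\PSL(2,\RR) \times K$ with $K$ a compact semisimple group absorbing the ``twist''; since compact factors contribute $0$ to the real rank, $H$ still satisfies the hypotheses of Theorem~\ref{thm_intro_main}, and the symmetric space satisfies $S_H = \HH^2$, so $\Gamma \backslash S_H = \Sigma$.

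Because $\cC$ is a higher rank Teichm\"uller component, every $\rho \in \cC$ is $P$-Anosov for a fixed parabolic $P$, and the connectedness of $\cC$ means $\rho$ is a $P$-Anosov deformation of $\rho_{\mathrm{tf}}$ in the sense of the introduction. Theorem~\ref{thm_intro_main} then applies to every $\rho \in \cC$ and every cocompact flag domain of discontinuity $\Omega_\rho \subset G/Q$, producing a smooth $\Gamma$-equivariant fibration $\Omega_\rho \to S_H$. Passing to $\Gamma$-quotients yields a smooth fiber bundle $M_\rho = \Gamma \backslash \Omega_\rho \to \Sigma$, which is the first assertion.

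For the second assertion, I would unwind the $(G,G/Q)$-structure. By construction, $M_\rho$ is Kleinian: its universal cover is the universal cover $\wti{\Omega}_\rho$ of $\Omega_\rho$, the developing map is the composition $\wti{\Omega}_\rho \to \Omega_\rho \hookrightarrow G/Q$, and the holonomy factors as
$$\pi_1(M_\rho) \twoheadrightarrow \pi_1(M_\rho)/\pi_1(\Omega_\rho) = \Gamma \overset{\rho}{\longrightarrow} G,$$
the first arrow being the deck group of the regular cover $\Omega_\rho \to M_\rho$. It remains to identify this surjection $\pi_1(M_\rho) \twoheadrightarrow \Gamma$ with the map $\pi_1(M_\rho) \to \pi_1(\Sigma)$ coming from the fiber bundle $M_\rho \to \Sigma$. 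But this bundle was obtained as the $\Gamma$-quotient of the $\Gamma$-equivariant fibration $\Omega_\rho \to S_H$, so the two maps coincide under the canonical identification $\Gamma = \pi_1(\Gamma \backslash S_H) = \pi_1(\Sigma)$.

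The only conceptual issue is the reduction of twisted Fuchsian representations to the hypothesis of Theorem~\ref{thm_intro_main}; once the compact twist has been absorbed into a rank-one semisimple group $H$, everything else is a direct translation of the general statement, with no further obstacle.
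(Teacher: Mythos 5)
Your proposal is correct and takes essentially the same route the paper does: the corollary is a direct specialization of Theorem~\ref{thm_intro_main} (proved via Theorems~\ref{thm:TopologyQuotient} and~\ref{thm:BundleStructureQuotient}) to the surface-group setting, together with the observation that a higher rank Teichm\"uller component lies in a single connected component of the $P$-Anosov locus and hence consists of $P$-Anosov deformations of the twisted Fuchsian representation. The unwinding of the holonomy in the second paragraph (that the deck-group surjection $\pi_1(M_\rho) \twoheadrightarrow \pi_1(M_\rho)/\pi_1(\Omega_\rho) \cong \Gamma$ coincides with the map induced by the fibration $M_\rho \to \Sigma$, because $\Omega_\rho$ retracts onto the fiber and $S_H$ is contractible) is exactly what the footnote in Section~2.5 alludes to, and you carry it out correctly.

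One small inaccuracy: the paper defines a compact extension of $\PSL(2,\RR)$ as a group $H$ admitting a surjective morphism to $\PSL(2,\RR)$ with compact kernel, which need not split as a direct product $\PSL(2,\RR)\times K$ and whose compact kernel need not be semisimple (for instance $\SL(2,\RR)$ itself, or $\SL(2,\RR)\times C$ with $C$ a compact torus, are allowed in the paper's examples of twisted $\iota$-Fuchsian representations and $\Theta$-principal embeddings). This does not invalidate your argument --- the only property of $H$ actually used is that its symmetric space is $\HH^2$ and Lemma~\ref{lem:proper action} applies, which holds for any such compact extension --- but the way you phrase the ``absorption of the twist'' is narrower than what the paper allows and slightly narrower than what the corollary needs. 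You should state it as: $H$ is a connected Lie group of real rank one, with finite center, surjecting onto $\PSL(2,\RR)$ with compact kernel, so that $S_H = \HH^2$ and $\Gamma\backslash S_H = \Sigma$. With that adjustment the proof matches the paper's.
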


We explain Corollary~\ref{cor_intro} and further examples in a bit more detail.
\begin{enumerate}
\item {\bf Hitchin components.}
Let $\Gamma = \pi_1(\Sigma)$ be a surface group and $G$ a split real simple linear group. The group $H = \SL(2,\RR)$ admits a \emph{principal representation} $\iota_0:H \rightarrow G$. Given a Fuchsian representation $\rho_0:\Gamma \rightarrow H$, the composition $\iota_0\circ\rho_0$ is called a principal Fuchsian representation in $G$. The representations of the connected component of $\iota_0\circ\rho_0$ in $\mathrm{Hom}(\Gamma,G)$ are called \emph{Hitchin representations}. Hitchin representations are Anosov with respect to the minimal parabolic subgroup $P_{min}<G$, or, equivalently, with respect to any parabolic subgroup $P<G$ \cite{labourie-anosov, fock-goncharov-1}. They are thus $P$-Anosov deformations of the lattice representation $\iota_0\circ\rho_0$, and Theorem~\ref{thm_intro_main} applies.

\item{\bf $P$-quasi-Hitchin representations.} 
Theorem~\ref{thm_intro_main}  also applies to deformations of Hitchin representations into complex Lie groups. For this we embed $G$ into its complexification $G_{\CC}$ and we consider the principal Fuchsian representation  $\iota_0\circ\rho_0:\Gamma \to G <G_{\CC}$ as taking values in $G_{\CC}$. This representation is Anosov with respect to any parabolic subgroup $P_{\CC} < G_{\CC}$. However not every continuous deformation of $\iota_0\circ\rho_0$ will be Anosov. We define the set of $P$-quasi-Hitchin representation to the be connected component of the space of 
 $P$-Anosov representations in $G_{\CC}$ containing the principal Fuchsian representations. When $G = \PSL(2,\RR)$ this is precisely the set of quasi-Fuchsian representations. Note that, in higher rank, this set might depend on the choice of parabolic subgroup $P$.
 
The geometry of $P_{min}$-quasi-Hitchin representations in $\PSL(n,\C)$ has been studied by Dumas--Sanders \cite{Dumas_Sanders_1}. In particular, they proved that flag domains of discontinuity $\Omega_\rho$ satisfy a Poincar\'e duality of rank $\dim(G/Q)-2$ and computed the cohomology of $M_\rho$. They conjectured that $M_\rho$ admits a fibration over the surface $\Sigma$.  Theorem~\ref{thm_intro_main} applies in this situation and thus gives a positive answer to their conjecture.

%

\item{\bf Positive representations.}
The Hitchin component is one example of a higher rank Teichm\"uller component. Other examples are formed by maximal representations, and more generally by spaces of $\Theta$-positive representations introduced in \cite{GW_pos, GW_pos2}. Here $\Theta$ is a subset of the set of simple roots $\Delta$. Hitchin representations are $\Delta$-positive representations. Maximal representations into Hermitian groups of tube type are $\{\alpha\}$-positive for a specific choice of $\alpha \in \Delta$. There are two further families of Lie groups that admit $\Theta$-positive structures and $\Theta$-positive representations.
When $G$ is a Lie group carrying a $\Theta$-positive structure, then there is a a special simple three dimensional $\Theta$-principal subgroup in $G$, see \cite{GW_pos2}. Contrary to the principal subgroup of a split real Lie group, this $H$ might have a compact centralizer, so there is a compact extension $H$ of the $\Theta$-principal subgroup that embeds into $G$.  We choose a discrete and faithful representation $\rho_0:\pi_1(\Sigma) \rightarrow H$ and call $\iota\circ\rho_0:\pi_1(\Sigma) \to G$ a twisted $\Theta$-principal embedding. This representation is $P_\Theta$-Anosov, where $P_\Theta$ is the parabolic subgroup determined by $\Theta$. In fact any deformation of $\iota\circ\rho_0$ is $P_\Theta$-Anosov \cite{GLW_pos}, and thus Theorem~\ref{thm_intro_main} applies.

Note that there are cases where not every $\Theta$-positive representation arises from a deformation of a principal or $\Theta$-principal Fuchsian representation. In particular when $G = \Sp(4,\RR), \SO(2,3), \SO(n,n+1)$ there are connected components of the space of $\Theta$-positive representations where every representation is Zariski-dense. In particular, Theorem~\ref{thm_intro_main}  does not apply to these components. When $G = \Sp(4,\RR), \SO(2,3)$ it has been proven by other means that the quotient manifolds $M_\rho$ are fiber bundles over $\Sigma$, see the discussion in Section~\ref{sec:related}.

\item{\bf $P$-quasi-positive representations.} 
Similar as in the discussion of quasi-Hitchin representations, when $G$ admits a $\Theta$-positive structure, we can embed $G$ into its complexification $G_\CC$, and any $\Theta$-positive representation $\rho:\pi_1(\Sigma) \to G <G_\CC$ will be $P$-Anosov for a set of parabolic subgroups determined by $\Theta$. Thus we can define the set of $P$-quasi-positive representations as the connected components of the space of $P$-Anosov representations into $G_\CC$ containing a $\Theta$-positive representation into $G$. 
Theorem~\ref{thm_intro_main} then applies to the components of  $P$-quasi-positive representations that contain a twisted $\Theta$-principal embedding.

\item{\bf Convex divisible representations.}
Applications of Theorem~\ref{thm_intro_main} are not limited to representations of surface groups, there are also interesting classes of representations of fundamental groups $\pi_1(M)$ of closed hyperbolic manifolds $M$ of higher dimension. These groups come together with a representation $\rho_0:\pi_1(M) \to \PO(1,n)$.  
Benoist \cite{Benoist1} introduced the notion of convex divisible representations. These are representations $\Gamma \to \PGL(n+1,\RR)$ for which there exists a $\Gamma$-invariant strictly convex domain $\Omega \subset \RR\PP^{n}$ on which $\Gamma$ acts properly discontinuously and cocompactly. Examples of such representations arise from the embedding $\iota:\PO(1,n) < \PGL(n+1,\RR)$, by considering the lattice representation $\iota\circ\rho_0$. This representation is $P_{1,n}$-Anosov, where $P_{1,n}$ is the stabilizer of a partial flag consisting of a line contained in a hyperplane. Benoist \cite{Benoist3} showed that the set of convex divisible representations is open and closed in the representation variety. In particular, any deformation of $\iota\circ\rho_0$ is a  $P_{1,n}$-Anosov representation, and there are many hyperbolic $n$-manifolds that admit such deformations. 
The topology of the domain 
$\Omega \subset \RR\PP^{n}$ and its quotient $\pi_1(M) \backslash \Omega$ is easy to understand. $\Omega$ is contractible and so $\pi_1(M) \backslash \Omega \cong M$. However, $\Omega$ is not the only flag domain of discontinuity one can associate to a convex divisible representation. There are many flag domains of discontinuity in other flag varieties $G/Q$, as well as flag domains of discontinuities in $G'/Q' $ when we consider $\pi_1(M)$ in a bigger Lie groups $G'$. The topology of these domains of discontinuity can be more complicated. Theorem~\ref{thm_intro_main} applies to all these domains of discontinuity, as long as $\rho$ is in the same connected component (of Anosov representations) as $\iota\circ\rho_0$.

\item{\bf AdS-quasi Fuchsian representations.} 
In \cite{Barbot-15,barbot-merigot} Barbot introduced the notion of AdS-quasi-Fuchsian representations of fundamental groups $\pi_1(M)$ of closed hyperbolic manifolds $M$ into $\SO(2,n)$. For this he considers the embedding $\iota:\SO(1,n) < \SO(2,n)$. The representation $\iota\circ\rho_0$ is $P$-Anosov, where $P$ is the stabilizer of an isotropic line. Barbot and Merigot showed that any deformation of $\iota\circ\rho_0$ is a $P$-Anosov representation, and moreover any AdS-quasi-Fuchsian representations is a deformation of $\iota\circ\rho_0$. Thus Theorem~\ref{thm_intro_main} applies to all AdS-quasi-Fuchsian representations.

\item{\bf Convex-cocompact $\mathbb{H}^{p,q}$-representations.}
Barbot's construction can be generalized. When $\pi_1(M)$ is the fundamental group of a closed hyperbolic manifold $M$, we can consider the embedding 
$\iota: \SO(1,q)< \SO(p,q)$. The representation $\iota\circ\rho_0$ is $P$-Anosov, where $P$ is the stabilizer of an isotropic line. It gives rise to a $\mathbb{H}^{p.q}$-convex-cocompact representation, studied by \cite{DGK18}. It is expected (see \cite{Wienhard_ICM}) that the entire connected component containing $\iota\circ\rho_0$ consists of 
$P$-Anosov representations, if this is proved, then Theorem~\ref{thm_intro_main} would apply to all representations in the component of $\iota\circ\rho_0$.

Note finally that one can consider Anosov deformations of these representations into the complexified group, and Theorem~\ref{thm_intro_main} applies in to such complex deformations as well.
\end{enumerate}

Even if we know that $M_\rho$ is a fiber bundle over  the locally symmetric space $\Gamma\backslash S_H$, it seems difficult in general to determine precisely the topology of the fiber. Explicit descriptions of the fibers have been given in some cases, see Section~\ref{sec:related}. In fact the main reason why such a general result as Theorem~\ref{thm_intro_main} has been previously overlooked seems to be that, in interesting low dimensional situations, there are explicit and natural $H$-equivariant fibrations from $\Omega$ to $S_H$ which are not smooth and whose fibers are not manifolds.

In the proof of Theorem~\ref{thm_intro_main}, the assumption that $\rho$ is a $P$-Anosov deformation of a rank one lattice is used crucially in order to reduce to the ``Fuchsian'' case. Indeed, Guichard--Wienhard proved that the topology of  $M_\rho$ is invariant under continuous deformation of $\rho$ in $\mathrm{Anosov}_P(\Gamma,G)$. We can thus assume without loss of generality that $\rho = \iota\circ\rho_0$. In that case, the domain $\Omega_\rho$ is $H$-invariant, and our main theorem follows from the following general result:

\begin{introlem}[See Lemma \ref{lem:proper action}]
Let $X$ be a smooth manifold with a proper action of a semisimple Lie group $H$. Then there exists a smooth $H$-equivariant fibration from $X$ to the symmetric space $S_H$. 
\end{introlem}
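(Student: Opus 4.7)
The plan is to construct the map $f\colon X \to S_H$ locally using a slice theorem, and then glue the local pieces together using the Cartan barycenter in the CAT(0) geometry of $S_H$. Write $S_H = H/K$ with $K$ a maximal compact subgroup of $H$, so that $S_H$ is a Hadamard manifold on which $H$ acts by isometries. Since the $H$-action on $X$ is proper, every stabilizer $H_x$ is compact, and the Palais slice theorem yields an $H$-invariant open cover $\{U_\alpha\}$ of $X$ by tubes of the form $U_\alpha \cong H \times_{L_\alpha} V_\alpha$, with each $L_\alpha$ compact and $V_\alpha$ a linear $L_\alpha$-representation. Since any two maximal compact subgroups of $H$ are conjugate, one may arrange that $L_\alpha \subset K$ for every $\alpha$; the assignment $[h,v]\mapsto hK$ then defines a smooth $H$-equivariant map $f_\alpha\colon U_\alpha \to S_H$ on each tube.

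To assemble the $f_\alpha$ into a global map, the next step is to produce a smooth $H$-invariant partition of unity $\{\chi_\alpha\}$ subordinate to $\{U_\alpha\}$. The key observation is that, for any smooth non-negative compactly supported function $\phi$ on $X$, the average $\tilde\phi(x) = \int_H \phi(h^{-1}x)\,dh$ is well-defined because properness of the action forces the set $\{h \in H : h \cdot \mathrm{supp}(\phi) \ni x\}$ to be compact, yielding a smooth $H$-invariant function; a standard paracompactness argument then assembles such averages into the desired partition of unity. I would then invoke the Cartan barycenter in $S_H$: every finitely supported probability measure $\sum_i \lambda_i \delta_{p_i}$ admits a unique barycenter, namely the minimizer of $y \mapsto \sum_i \lambda_i d(y, p_i)^2$, which depends smoothly on the data $(\lambda_i, p_i)$ and is equivariant under the diagonal action of $\Isom(S_H) \supset H$. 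Setting $f(x) \equaldef \mathrm{bar}\bigl(\sum_\alpha \chi_\alpha(x)\,\delta_{f_\alpha(x)}\bigr)$ then defines a smooth $H$-equivariant map $f\colon X \to S_H$.

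For the fibration property, the orbit map $\mu_{f(x)}\colon H \to S_H$, $h \mapsto h \cdot f(x)$, factors by $H$-equivariance as $f \circ \mu_x$ where $\mu_x\colon H \to X$ is the orbit map at $x$; passing to differentials at the identity, the surjective differential $d\mu_{f(x)}|_e \colon \mathfrak{h} \to T_{f(x)}S_H$ factors through $df_x$, so $df_x$ is surjective and $f$ is a submersion. Local smooth sections of $H \to H/K$ trivialize $f$ over neighborhoods in $S_H$, and all fibers are $H$-equivariantly diffeomorphic, making $f$ a smooth fiber bundle. The main obstacle in this program is the construction of the smooth $H$-invariant partition of unity, where properness is used in a crucial way (a naïve averaging over $H$ would diverge); once this is in hand, the barycenter construction and the transitivity of $H$ on $S_H$ take care of smoothness, equivariance, and the submersion property essentially for free.
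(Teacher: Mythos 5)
Your proof is correct, and it takes a genuinely different route from the paper's. The paper's proof of Lemma~\ref{lem:proper action} first chooses a torsion-free cocompact lattice $\Gamma < H$ (which exists by Borel--Harish-Chandra), constructs a smooth $\Gamma$-equivariant map $f\colon X \to S_H$ by a classifying-space argument (using that $S_H$ is contractible and $\Gamma\backslash X$ is a closed manifold), and then $H$-symmetrizes $f$ by setting $\bar f(x) = \mathrm{Bar}\{F^x_*\mu\}$, where $F^x(g) = g\cdot f(g^{-1}x)$ and $\mu$ is the Haar measure on the compact quotient $H/\Gamma$. You instead produce the local $H$-equivariant data directly from the Palais slice theorem (after conjugating each compact isotropy group into $K$), and glue it with an $H$-invariant partition of unity by taking the Cartan barycenter of a finite convex combination. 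Both proofs hinge on the same two ingredients: the barycenter in the Hadamard manifold $S_H$ (with smoothness of the barycenter coming from the implicit function theorem, as in Lemma~\ref{lem:smooth}), and the elementary fact that a smooth $H$-equivariant map to a homogeneous $H$-space is automatically a fiber bundle (Lemma~\ref{lem:fiber bundle}); your submersion argument makes the latter a bit more explicit but is essentially the same. The real difference is the source of the ``partial'' equivariant data to be averaged: the paper gets a single globally $\Gamma$-equivariant map and averages over $H/\Gamma$, you get local $H$-equivariant maps and average over the partition-of-unity weights. Your route avoids the existence of cocompact lattices and the classifying-map step, at the cost of the slice theorem and the construction of the $H$-invariant partition of unity by averaging over $H$ --- a step you rightly flag as the technical crux, since properness is exactly what makes $\int_H \phi(h^{-1}x)\,dh$ a locally finite integral and hence a smooth function. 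One could argue your version is slightly more flexible (it would apply to any $H$ acting properly with $H/K$ a Hadamard manifold, without needing $H$ to admit a cocompact lattice), whereas the paper's reduction to the compact quotient $H/\Gamma$ makes the measure-theoretic part of the barycenter argument more immediate.
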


Though this fairly general lemma sounds like a classical result, it seems to have been overlooked by people in the field. To prove it, we fix an arbitrary torsion-free uniform lattice $\Gamma \subset H$, choose a smooth $\Gamma$-equivariant map from $X$ to $S_H$, and then take a barycentric average of $f$ under some action of $H$.

A more precise version of Theorem~\ref{thm_intro_main} (see Theorem~\ref{thm:BundleStructureQuotient} and Corollary~\ref{cor:TopologyBundleSurfaces}) shows that $M_\rho$ is a fiber bundle over $\Gamma \backslash S_H$ associated to an explicit principal $K$-bundle, where $K$ is a maximal compact subgroup of $H$. In order to complete the description of $M_\rho$, the only missing element is the topology of the fiber. The topology of the fiber has been determined in some cases, see Section~\ref{sec:related}. 
In the second part of the paper we determine the fiber in a special low-dimensional case.

\subsection*{Part 2: Symplectic quasi-Hitchin representations}

In the second part of the paper, we focus on  \emph{$P$-quasi-Hitchin representations} into $\PSp(4,\CC)$, where $P$ is the stabilizer of a line in $\mathbb{CP}^3$. Let $\Gamma$ be the fundamental group of a closed surface $\Sigma$ of genus $g\geq 2$, embedded as a uniform lattice in $H= \PSL(2,\R)$ via a Fuchsian representation $\rho_0$. Let $\iota_0\co\PSL(2,\R)\to \PSp(4,\CC)$ the principal  representation. We see $\iota_0\circ\rho_0$ as a $P$-Anosov representation, and we consider $P$-quasi-Hitchin representations, i.e. $P$-Anosov deformations of  $\iota_0\circ\rho_0$. 

Guichard and Wienhard \cite{guichard-wienhard:anosov} show that $P$-quasi-Hitchin representations $\rho\co\pi_1(\Sigma)\to\mathrm{Sp}(4,\CC)$ admit cocompact domains of discontinuity $\Omega_\rho$ in the space $\Lag(\CC^{4})$ of Lagrangian subspaces of $\CC^{4}$, of complex dimension $3$. We write as before $M_\rho = \rho(\Gamma) \backslash \Omega_\rho$. By topological invariance $M_\rho$ is diffeomorphic to $M_{\iota_0\circ\rho_0}$, and Theorem~\ref{thm_intro_main} tells us that this manifold is a smooth fiber bundle over the hyperbolic surface $\Sigma = \Gamma \backslash \HH^2$.

We prove the following theorem:
\begin{introthm}\label{fiber_smooth}
Let $\rho$ be a $P$-quasi-Hitchin representation of a surface group $\Gamma= \pi_1(\Sigma)$ into $\PSp(4,\CC)$, and let $\Omega_\rho$ be its flag domain of discontinuity in the space of complex Lagrangians. Then $M_\rho = \rho(\Gamma) \backslash \Omega_\rho$ is a smooth fiber bundle over $\Sigma$ with fiber homeomorphic to $\CP^2 \# \overline{\CP}^2$.
\end{introthm}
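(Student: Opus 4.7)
By Guichard--Wienhard's topological invariance of $M_\rho$ under deformation in $\mathrm{Anosov}_P(\Gamma,\PSp(4,\C))$, I may replace $\rho$ by the principal Fuchsian representation $\iota_0\circ\rho_0$, which lies in the same $P$-quasi-Hitchin component. Theorem~\ref{thm_intro_main} then yields a smooth fiber bundle $M_\rho \to \Sigma = \Gamma\backslash\HH^2$. Since $\Lag(\C^4)$ has complex dimension $3$ and $S_H = \HH^2$ has real dimension $2$, the fiber $F$ is a closed oriented smooth $4$-manifold, and the problem reduces to recognising $F$ as $\CP^2\#\overline{\CP}^2$.

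\textbf{A concrete model for the fiber.} The principal embedding extends to $\PSL(2,\C)\hookrightarrow\PSp(4,\C)$, so $\Lag(\C^4)$ carries a $\PSL(2,\C)$-action under which $\Omega$ is $\PSL(2,\R)$-invariant. Fixing $x_0\in\HH^2$ with stabiliser $K=\PSO(2)$, Lemma~\ref{lem:proper action} presents $F$ as the $K$-invariant fiber of the $\PSL(2,\R)$-equivariant map $\Omega\to\HH^2$ over $x_0$. To make this topologically tractable I will use the identification, developed later in the paper, between $\Lag(\C^4)$ and the space of unlabelled regular ideal tetrahedra in $\HH^3$ together with their degenerations. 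Under this dictionary the $\PSL(2,\C)$-orbits correspond to combinatorial coincidence patterns among the four ideal vertices in $\partial\HH^3\simeq\CP^1$, and the limit set $\Lag(\C^4)\setminus\Omega$ is cut out by the Anosov boundary curve of $\iota_0\circ\rho_0$. The slice over $x_0$ is then the locus of (possibly degenerate) regular ideal tetrahedra whose distinguished centre lies on the totally geodesic $\HH^2\subset\HH^3$ associated to $x_0$.

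\textbf{Recognising $F$ as $\CP^2\#\overline{\CP}^2$.} Given this model, I will stratify $F$ by $\PSL(2,\C)$-orbit type of the underlying tetrahedron to produce a handle (or CW) decomposition. Three checks then pin down the homeomorphism type: orientability, inherited from the complex structure on $\Lag(\C^4)$; simple connectedness, verified from the resulting cell structure, with the generic stratum providing an open cell and the added boundary strata not creating new loops; and the computation $b_2(F)=2$ together with the intersection form $\langle 1\rangle\oplus\langle -1\rangle$, read off from the mutual incidences of the two codimension-$2$ degeneration strata. Freedman's classification of simply connected closed topological $4$-manifolds then forces $F\cong\CP^2\#\overline{\CP}^2$. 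Alternatively one may try to realise $F$ directly as the one-point blow-up of $\CP^2$ (equivalently, the nontrivial $S^2$-bundle over $S^2$) through a natural projection in the tetrahedron picture.

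\textbf{Main obstacle.} The crux is the middle step: turning the tetrahedron dictionary into a description of the $K$-invariant slice sharp enough to compute the intersection form. The $\PSO(2)$-action on the slice, the codimension-$2$ degeneration strata, and the precise way in which the $\PSL(2,\C)$-stratification interacts with the Anosov limit set all have to be tracked carefully; once this bookkeeping is in place, the $4$-manifold recognition is routine, and the geometry of regular ideal hyperbolic tetrahedra carries the rest of the argument.
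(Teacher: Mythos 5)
Your high-level route matches the paper's (reduce to $\iota_0\circ\rho_0$ by topological invariance, invoke Theorem~\ref{thm_intro_main}, use the tetrahedron dictionary, compute the intersection form, finish with Freedman), but there are two substantive gaps in the middle. First, the geometry: the fiber of the barycenter-based projection is not the locus of tetrahedra with barycenter in the totally geodesic $\HH^2$ (that would be a codimension-$1$, hence $5$-dimensional, condition) but the locus of tetrahedra with barycenter on the geodesic $\ell$ through $x_0$ \emph{perpendicular} to $\HH^2$. More importantly, you treat this geometric slice as a smooth closed $4$-manifold on which you can directly read a handle decomposition, but it is not: the barycenter projection $q$ is only continuous, and the paper shows (Corollary~\ref{cor:non-smooth fibration}) that its fiber $F\cong\mathfrak{T}_{\overline{\ell}}$ has four singular points, two of them cones over Dehn fillings of the trefoil complement. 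So you must carefully keep two fibers apart: the smooth $\mathfrak{F}$ furnished by Theorem~\ref{thm_intro_main} (about which you know nothing concrete) and the singular $F$ furnished by the tetrahedron picture (about which you know everything). The only bridge is that both are deformation retracts of $\Omega$, hence homotopy equivalent, so you compute cup products on $F$ and transfer them to $\mathfrak{F}$.

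Second, the intersection-form computation. Your plan of ``reading off'' $\langle 1\rangle\oplus\langle -1\rangle$ from ``mutual incidences of the two codimension-$2$ degeneration strata'' will not work directly, because the classes of the two degenerate $\CP^1$'s do \emph{not} give a basis of $H^2(F;\Z)$. The paper's decomposition $\mathfrak{T}_{\overline{\ell}}=A\cup B$ has $A\cap B\simeq\mathrm{SO}(3)/A_4$, whose $H^2$ is $\Z_3$; Mayer--Vietoris then identifies $H^2(F;\Z)$ with the kernel of $\Z\oplus\Z\to\Z_3$, $(n,m)\mapsto n+m$, so the natural generators map to $(3,0)$ and $(0,3)$, which are index-$3$ sublattice elements rather than a unimodular basis. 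Pinning down the form requires the orthogonality of $(3,0)$ and $(0,3)$ (they come from disjoint retracts) \emph{and} the orientation-reversing involution $\iota$ exchanging $A$ and $B$, which forces the signature to vanish and the norms of $(3,0)$, $(0,3)$ to be $\pm q$ for a common $q$; unimodularity then forces $q=\pm 3$ and the form $\mathrm{diag}(1,-1)$ in the basis $(2,1),(1,2)$. Without the $\Z_3$ bookkeeping you would most likely produce a non-unimodular answer, or the wrong parity. Your fallback (``realise $F$ directly as the blow-up of $\CP^2$'') faces the same obstruction: $F$ itself is singular, so there is no direct smooth identification to exhibit.
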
 

The domain of discontinuity $\Omega_\rho \subset \Lag(\CC^{2n})$ is of particular interest in the context of potential generalizations of Bers' double uniformization for higher rank Teichm\"uller spaces.
In the case when $n=1$ and $\rho$ is a Fuchsian representation, $\Omega_\rho$ is the disjoint union of the upper and the lower half disc, if $\rho$ is a quasi-Fuchsian representation it is precisely the complement of the limit set, and thus consists of two connected components, whose quotients give rise to the two conformal structures associated to a quasi-Fuchsian representation.
For general $n$ and $\rho$ a Hitchin representation into $\PSp(2n,\RR)$, the domain of discontinuity $\Omega_\rho$ contains two copies of the symmetric space associated to $\PSp(2n,\RR)$, a copy of the Siegel upper half space, and a copy of the Siegel lower half space, which are exchanged by the complex conjugation. On the other hand it also contains other strata, e.g. all the pseudo-Riemannian symmetric spaces $\PSp(2n,\RR)/\PSU(p,q)$, $p+q=n$, which are permuted by the complex conjugation. (For a more detailed discussion see \cite{Wienhard_thurston}). 
The fact that for $\PSp(4,\CC)$ the fiber is $\CP^2 \# \overline{\CP}^2$ appears to be quite interesting in this respect.

In order to prove Theorem~\ref{fiber_smooth} we actually have to take quite a bit of a detour. We first give a natural geometric construction of an $H$-equivariant continuous fibration $\pi$ from $\Omega_{\iota_0\circ\rho_0}$ to $\HH^2$. The map $\pi$ is not smooth and its fiber $F$ is singular. Nevertheless, the fiber $F$ is homotopy equivalent to the fiber $F'$ of a smooth equivariant fibration, since both are retractions of $\Omega_{\iota_0\circ\rho_0}$. By carefully studying $F$, we can determine its second cohomology and the intersection form on it. Finally, using the classification of smooth $4$-manifolds due to Whitehead, Milnor, Milnor--Hausemoller, Freedman, Serre and Donaldson, we deduce the homeomorphism type of $F'$ (which has the same second homology group) and prove the theorem.

\subsection{Related works and perspectives}\label{sec:related}


The topology of flag domains of discontinuity and their quotient manifolds $M_\rho$ for Anosov representations $\rho$ have been studied before in special examples, mainly for Anosov representations of a surface group $\pi_1(\Sigma)$. We review these results here.

In \cite{guichard-wienhard:convex} Guichard--Wienhard constructed flag domains of discontinuity in $\RR\PP^3$ for Hitchin representations into $\PSL(4,\RR)$ and $\PSp(4,\RR)$. They showed that these domains of discontinuity have two connected components $\Omega_1$ and $\Omega_2$. They showed that the quotient manifold $\pi_1(S) \backslash \Omega_1$ is homeomorphic to the unit tangent bundle $T^1S$ of the surface and in fact gives rise to convex foliated projective structures of $T^1S$. The quotient manifold $\pi_1(S) \backslash \Omega_2$ is a quotient of  $T^1S$ by $\ZZ/3\ZZ$.
They also show that deformations of quasi-Fuchsian representations (in $\PSL(2,\CC)\cong \PO(3,1)$) into $\PSL(4,\RR)$ give rise to projective structures on $T^1 S$. 

The study of Hitchin representations in  $\PSL(4,\RR)$ and $\PSp(4,\RR)$ and their domains of discontinuity in $\RR\PP^3$ can be carried out also for lattices in $\PSL(2,\R)$ that have torsion, see Alessandrini--Lee--Schaffhauser \cite{AleLeeSchaff}. There, they show that in this case the quotient $\RR\PP^3$-manifolds are homeomorphic to certain Seifert-fibered 3-manifolds that depend on the lattice.    

In \cite{guichard-wienhard:anosov}, determining part of the cohomology of the flag domains of discontinuity played a key role in showing that the action of $\rho(\Gamma)$ on $\Omega_\rho$ is cocompact. 
In their description of examples of such flag domains of discontinuity they describe several explicit examples, among them some where $M_\rho$ are in fact Clifford--Klein forms.
For maximal representations in the symplectic group, and for the domain of discontinuity in $\RR\PP^{2n-1}$ they announced that $M_\rho$ is a fiber bundle over $S$ with fiber $\OG(n)/\OG(n-2)$. This in particular also applies to the components of the space of maximal representations into $\PSp(4,\RR)$ where all represenations are Zariski-dense. This result lead them to conjecture that the quotient manifold $M_\rho$ is a compact fibre bundle over $\Sigma$ for all higher Teichm\"uller spaces, see \cite[Conjecture~13]{Wienhard_ICM}.


In \cite{CTT}, Collier--Tholozan--Toulisse studied the case where $\rho: \pi_1(\Sigma) \to \SO(2,n+1)$ is a maximal representation of a closed surface group. Such representations admit a flag domain of discontinuity $\Omega_\rho$ in the space of totally isotropic planes in $\R^{2,n+1}$. The authors prove that such maximal representations come with an equivariant spacelike embedding of $\HH^2$ into the pseudo-hyperbolic space $\HH^{2,n}$ and that the domain $\Omega_\rho$  fibers $\rho$-equivariantly over this spacelike disk, and deduce that $M_\rho$ is a homogeneous fiber bundle over $\Sigma$ with fiber a Stiefel manifold. The topological invariants of this fiber bundle turn out to depend on the connected component of $\rho$ is the set of maximal representations.

In particular, for $n=3$, one obtains circle bundles over $\Sigma$ whose Euler class varies with the connected component of maximal representations. Interestingly, there are connected components of the set of maximal representations into $\SO(2,3)$ that do not contain a representation factoring through $\PSL(2,\R)$. For these representations, The fibration of $M_\rho$ over $\Sigma$ is not given by Theorem \ref{thm_intro_main}.


%
%
%
%

When $\rho$ is a quasi-Hitchin representation into a complex group $G$, Dumas--Sanders \cite{Dumas_Sanders_1} computed the cohomology ring of $\Omega_\rho$ and $M_\rho$ for all choices of parabolic subgroups and balanced ideals. They found that the cohomology of $M_\rho$ is the tensor product of the cohomology of $\Sigma$ with the cohomology of $\Omega_\rho$ and that, under their hypothesis, $\Omega_\rho$ is a Poincar\'e duality space. They remarked that this is compatible with $M_\rho$ being a fiber bundle on $\Sigma$, and stated a conjecture \cite[Conjecture~1.1]{Dumas_Sanders_1} that is a special case of our Theorem~\ref{thm_intro_main}. Interestingly, in their conjecture they stated that $M_\rho$ is a continuous fiber bundle over $\Sigma$ because, in some examples available at the time, the known fibrations where only continuous, but not smooth. They verified their conjecture in the special case when $G=\SL(3,\C)$ and $G/Q$ is the full flag variety.


When $G = \SL(2n,\K)$ with $\K = \R$ or $\C$, $\iota$ is the principal representation and $G/Q$ is $\KP^{2n-1}$, Alessandrini--Davalo--Li \cite{AleDavLi} described the topology of $M$, as fiber bundle over $\Sigma$, and described the topology of the fiber, the structure group $\SO(2)$ and the Euler class. They used Higgs bundles, as described in the survey paper \cite{AleSIGMA}. In a paper in preparation, Alessandrini--Li \cite{AleLi} extend some of these results to the case when $G = \SL(n,\K)$ and $G/Q$ is a partial flag manifold parametrizing flags consisting of lines and hyperplanes, and when $G = \SL(4n+3,\R)$, $G/Q = \bS^{4n+2}$, and $M$ is the manifold constructed by Stecker-Treib \cite{ST}.

In an independent work using different techniques, Colin Davalo \cite{Davalo} proves related results. Given an $\iota$-Fuchsian representation of a surface group, under certain hypotheses, he selects a suitable parabolic subgroup $Q$ and he can describe a cocompact domain of discontinuity in $G/Q$. He proves that the quotient manifold of that domain is a fiber bundle over the surface. For example, for each positive $\iota$-Fuchsian representation, he can describe one or two such domains. He also shows two examples of $\iota$-lattice representations of fundamental groups of hyperbolic manifolds where his technique still works.


%
%
%
%

\subsubsection{Wild Kleinian groups} \label{subsec:GromovLawsonThurston}

In \cite{Gromov_Lawson_Thurston} Gromov--Lawson--Thurston show that one can obtain wild convex-cocompact embeddings of a surface group $\Gamma = \pi_1(\Sigma)$ into $\Isom(\HH^4)$ from a ``twisted necklace'' of $2$-spheres in $\partial_\infty \HH^4$. They construct such convex-cocompact representations for which $M_\rho$ is a non-trivial circle bundle over~$\Sigma$. Again, by topological invariance, such $\rho$ cannot be deformed to a Fuchsian representation within the domain of convex-cocompact representations. Such examples were also obtained independently by Kapovich \cite{kapovich:flat}. 

Gromov--Lawson--Thurston also point out that, starting from a knotted necklace, one obtains a convex-cocompact representation whose limit set is a \emph{wild knot}. The assiociated conformal $3$-manifold $M_\rho$ is then obtained by gluing a circle bundle over a surface with boundary with one or several knot complements. These examples do not fiber over the surface $\Sigma$ and their domain of discontinuity has infinitely generated fundamental group, showing that Theorem \ref{thm_intro_main} cannot be true in general for Anosov representations which are not Fuchsian deformations.

For more examples of convex-cocompact subgroups of $\Isom(\HH^n)$ with ``wild'' limit set (e.g. Antoine's necklace of Alexander's horned sphere), we refer to the survey of Kapovich \cite{kapovich:HigherKleinian}.

\subsection*{Acknowledgments}

The authors would also like to thank Renato Bettiol, David Dumas, Steve Kerckhoff, Qiongling Li, Tom Mark, Tye Lidman, Andy Sanders, Florian Stecker for interesting conversations related to the paper, and Colin Davalo for sharing a preprint of his results.

\subsection*{Outline of the paper}

In Section \ref{sec:background}, we review the required background on Anosov representations and their domains of discontinuity. Section \ref{sec:topology_of} is dedicated to the proof of Theorem \ref{thm_intro_main}. These form the first part of the paper.

The second part on the paper focuses on quasi-Hitchin representations in $\Sp(4,\CC)$. In Section \ref{sl2orbits}, we describe the action of $\PSL(2,\C)$ on the $\Lag(\C^4)$ and identify the Lagrangian Grassmannian to the space of (possibly degenerate) regular ideal tetrahedra in $\HH^3$. Using this point of view, we construct a $\PSL(2,\R)$-equivariant ``projection'' from $\Lag(\C^4)$ to $\bar{\HH^2}$ that we study more closely in Section \ref{sub:new_york}. In Section \ref{fiber} we carefully study the topology of the fiber $F$ of this projection. In particular, we compute the intersection form on its second cohomology group, and conclude the proof of Theorem \ref{fiber_smooth} using the topological classification of simply connected $4$-manifolds.


\part{Topology of the quotient of the domain of discontinuity } \label{part1}

\section{Anosov representations} \label{sec:background}


In this section, we recall the notion of Anosov representation, originally introduced in \cite{labourie-anosov, guichard-wienhard:anosov}, and we discuss several interesting examples. We then review the construction of their flag domains of discontinuity, based on \cite{guichard-wienhard:anosov, KLP2}.
 

\subsection{Definition and properties}

There are several equivalent definitions of Anosov representations in literature, see \cite{labourie-anosov, guichard-wienhard:anosov, KLP3, GGKW, BPS, KP}. Here we will describe the one that is more suitable for our aims. Let $G$ be a connected semisimple Lie group with finite center and $P$ a parabolic subgroup of $G$ that is conjugate to its opposite parabolic subgroup $P^{\textit{op}}$. Two points $p$ and $q$ in $G/P$ are called \emph{transverse} if there exists $g\in G$ such that $g\Stab_G(p) g^{-1} = P$ and $g\Stab_G(q)g^{-1} = P^{\textit{op}}$.

Let now $\Gamma$ be a finitely generated hyperbolic group with Gromov boundary $\partial_\infty \Gamma$.

\begin{Definition} \label{def - Anosov Representation}
A representation $\rho\co\Gamma \to G$ is $P$-\emph{Anosov} if there exists a continuous, $\rho$-equivariant map 
\[ \xi=\xi_{\rho} \co \partial_\infty \Gamma \longrightarrow G/P\] 
that is
\begin{itemize}
\item \emph{tranverse}, i.e. $\xi_\rho(x)$ and $\xi_\rho(y)$ are tranverse for all $x \neq y \in \partial_\infty \Gamma$;
\item \emph{strongly dynamics preserving}, i.e. for any sequence $(\gamma_n)_{n\in \N} \in \Gamma^\N$ with $\gamma_n \underset{n\to +\infty}{\longrightarrow} \gamma_+ \in \partial_\infty \Gamma$ and $\gamma_n^{-1} \underset{n\to +\infty}{\longrightarrow} \gamma_- \in \partial_\infty \Gamma$, 
\[\rho(\gamma_n)\cdot p \underset{n\to +\infty}{\longrightarrow} \xi_\rho(\gamma_+)\]
for all $p\in G/P$ transverse to $\xi_\rho(\gamma_-)$.
\end{itemize}
A subgroup $\Gamma$ of $G$ is called Anosov if it is hyperbolic and the inclusion $\Gamma \hookrightarrow G$ is Anosov with respect to some proper parabolic subgroup $P$ of $G$.
\end{Definition}

We denote by $\mathrm{Anosov}_P(\Gamma,G)$ the subset of $\mathrm{Hom}(\Gamma,G)$ consisting of $P$-Anosov representations. Note that $P$-Anosov representations are discrete and have finite kernel.  In this paper we will only work with groups $\Gamma$ that are torsion-free. For such groups, $P$-Anosov representations are thus discrete and faithful.

One of the most important properties of Anosov representations is their structural stablility, i.e. $\mathrm{Anosov}_P(\Gamma,G)$ is open in $\mathrm{Hom}(\Gamma,G)$. Structural stability gives a way to construct several Anosov representations as small deformations of a fixed Anosov representation. 
This is a major source of examples, as we will discuss in Section \ref{sub:examples anosov lattices}.

Another important property of $P$-Anosov representations is that they admit cocompact domains of discontinuity in boundaries of $G$, i.e. in homogeneous spaces $G/Q$, where $Q$ is a proper parabolic subgroup of $G$, possibly different from $P$. We will discuss this property in Section \ref{sub:dod}.

\subsection{Construction of Anosov representations via deformation} 
  
\label{sub:examples anosov lattices}

Let us fix a connected semisimple Lie group $H$ of real rank $1$ with finite center, and let $K \subset H$ be its maximal compact subgroup. The symmetric space $S_H = H/K$ has strictly negative sectional curvature and is thus Gromov hyperbolic. Recall that a \emph{uniform lattice} $\Gamma < H$ is a discrete cocompact subgroup of $H$. Any such lattice is quasi-isometric to $S_H$ and is thus a hyperbolic group. Moreover, $H$ has a unique conjugacy class of parabolic subgroups $P_H$. By Guichard--Wienhard \cite[Thm 5.15]{guichard-wienhard:anosov}, $\Gamma$ is a $P_H$-Anosov subgroup of $H$. We will always assume that $\Gamma$ is torsion-free, which is always virtually true by Selberg's lemma.

\begin{Remark}
Note that the Anosov subgroups of a real rank $1$ Lie group $H$ are precisely its quasi-isometrically embedded (equivalently: quasi-convex, or convex-cocompact) subgroups.
\end{Remark}

An important case is when $H$ is a compact extension
\footnote{For example, $H$ can be $\SL(2,\R)$, or $\SL(2,\R) \times \OG(n)$.} 
of
$\PSL(2,\R)$ (i.e. $H$ admits a surjective morphism to $\PSL(2,\R)$ with compact kernel). In that case,  $S_H$ is the hyperbolic plane $\HH^2$, and a torsion-free cocompact lattice $\Gamma$ in $H$ is a \emph{surface group}, i.e. $\Gamma = \pi_1(\Sigma)$, where $\Sigma$ is a closed orientable surface of genus $g \geq 2$. 
A representation $\rho_0:\pi_1(\Sigma) \rightarrow \PSL(2,\R)$ is called \emph{Fuchsian} if it is discrete and faithful (in which case $\rho_0(\pi_1(\Sigma)) \backslash \HH^2$ is a closed hyperbolic surface diffeomorphic to $\Sigma$). Similarly, a discrete and faithful representation into a compact extension $H$ of $\PSL(2,\R)$ will be called a \emph{twisted Fuchsian} representation. It is the case if and only if its projection to $\PSL(2,\R)$ is Fuchsian.

Other interesting cases arise when $H$ is (a compact extension of) $\PO_0(1,n)$ or $\PU(1,n)$, in which cases the symmetric space $S_H$ is respectively the real hyperbolic space $\HH^n = \HH^n_{\R}$ and the complex hyperbolic space $\HH^n_{\C}$. The group $\Gamma$ is then the fundamental group of a closed real hyperbolic or complex hyperbolic manifold. The other Lie groups of real rank $1$ (namely, $\Sp(1,n)$ and $\mathrm F_4^{-20}$) are less interesting for this paper since their lattices are superrigid (see below). Still, our Theorem \ref{thm_intro_main} applies also to them.

Let us fix a uniform torsion-free lattice $\Gamma \subset H$. We choose an embedding $\iota:H \rightarrow G$, where $G$ is a connected semisimple Lie group $G$ with finite center. By Guichard-Wienhard \cite[Prop. 4.7]{guichard-wienhard:anosov}, the representation $\iota \circ  \rho_0 : \Gamma \rightarrow G$ is $P$-Anosov for certain parabolic subgroups $P$ of $G$ described in \emph{loc. cit}.
We will call such a representation an $\iota$-\emph{lattice representation} of $\Gamma$ in $G$. 

When $H$ is $\PSL(2,\R)$, the representation $\iota \circ  \rho_0 : \pi_1(\Sigma) \rightarrow G$ will be called an $\iota$-\emph{Fuchsian representation} in $G$. Similarly, when $H$ is a compact extension of $\PSL(2,\R)$, $\iota \circ  \rho_0$ will be called a \emph{twisted} $\iota$-\emph{Fuchsian representation} in $G$.

Using the property of structural stability, we can deform the representation $\iota \circ \rho_0$, obtaining an open subset of $\mathrm{Hom}(\Gamma,G)$ entirely consisting of $P$-Anosov representations of $\Gamma$ in $G$. In the following we denote by $\mathrm{Anosov}_{P,\iota,\rho_0}(\Gamma,G)$ the connected component of $\mathrm{Anosov}_{P}(\Gamma,G)$ that contains the representation $\iota \circ \rho_0$. We will say that a representation of $\Gamma$ is a \emph{$P$-Anosov deformation of a lattice representation} if it belongs to one of the connected components $\mathrm{Anosov}_{P,\iota,\rho_0}(\Gamma,G)$. In the special case when $H$ is a compact extension of $\PSL(2,\R)$, such representations will be called \emph{$P$-Anosov  deformations of a twisted $\iota$-Fuchsian representation}. 


The main source of examples of interesting deformation spaces of Anosov representations is the case when $H$ is a compact extension of $PSL(2,\R)$, i.e. the case of surface groups. These examples are discussed in Section \ref{sub:examples anosov surfaces}. In the case of uniform lattices in $\PO_0(n,1)$, i.e. fundamental groups of closed hyperbolic $n$-manifolds, we know several constructions of interesting deformations. For example, when $\iota:\PO_0(n,1) \rightarrow \mathrm{PSL}(n+1,\R)$ is the canonical embedding, then every deformation (not just a small deformation) of $\iota \circ  \rho_0$ is an Anosov representation, see Benoist \cite{Benoist3}. The same behaviour is expected when $\iota:\PO_0(n,1) \rightarrow \PO_0(n,p)$ is the canonical embedding, see the discussion in \cite{Wienhard_ICM}. When $p=2$ this was proved by Barbot and Merigot \cite{Barbot-15, barbot-merigot}. Our Theorem~\ref{thm_intro_main} applies to all these deformations, as well as to small deformations of such lattices in the complexification of the Lie group.

Lattices in $\PU(n,1)$ exhibit more rigid behaviour, see \cite{Klingler} and references therein. Still, some of them admit interesting Zariski dense deformations into higher rank Lie groups, but very few examples are known. 
When $H \simeq \Sp(1,n)$ or $\mathrm F_4^{-20}$, by a theorem of Corlette \cite{Corlette-rigidity}, $\Gamma$ is superrigid. In particular, there are no non-trivial deformations of $\iota \circ  \rho_0 : \Gamma \rightarrow G$.

\begin{Remark}
All the arguments in this Section \ref{sub:examples anosov lattices} are more general than the way we presented them. The hypothesis that $\Gamma$ is torsion-free is not really needed, and we can also replace the assumption that $\Gamma$ is a uniform lattice in $H$ with the more general assumption that $\Gamma$ is a convex-cocompact subgroup of $H$. Also in this higher generality, embeddings of $H$ in other groups $G$ allow to construct  open subsets of Anosov representations in $G$. In our discussion, however, we restricted our attention to torsion-free uniform lattices for additional clarity and because our Theorem \ref{thm_intro_main} below works in this special case. 
\end{Remark}

\subsection{Anosov representations of surface groups} \label{sub:examples anosov surfaces}

The case of surface groups is the one that is best understood. When $H$ is a compact extension of $\mathrm{PSL}(2,\R)$, Lie theory gives a classification of all representations $\iota: H \rightarrow G$, for a simple group $G$. Most of the time, twisted $\iota$-Fuchsian representations into $G$ admit small deformations with Zariski dense image. 

In special cases, for particular twisted $\iota$-Fuchsian representations into $G$, all deformations (not just small ones) are Anosov. This phenomenon gives rise to the so called \emph{higher rank Teichm\"uller components}, defined as connected components of the representation variety $\mathrm{Hom}(\pi_1(S),G)/G$ that consist entirely of discrete and faithful representations. They generalize many aspects of classical Teichm\"uller spaces, which can be seen as the spaces of (equivalence classes of) marked hyperbolic structures on a surface, or equivalently as a subset of $\mathrm{Hom}(\pi_1(S),\mathrm{PSL}(2,\R))/\mathrm{PSL}(2,\R)$.
 
There are four families of higher rank Teichm\"uller spaces (see Guichard--Wienhard \cite{GW_pos2}), and most of them are deformations of twisted $\iota$-Fuchsian representations. The first family are \emph{Hitchin representations}, introduced by Hitchin \cite{Hitchin}. In fact Labourie's original motivation for defining Anosov representations in \cite{labourie-anosov} was showing that Hitchin representations form higher rank Teichm\"uller spaces. Hitchin representations are defined when $G$ is a split real simple Lie group. Then $G$ admits a special conjugacy class of representations $\iota_0:\SL(2,\R) \rightarrow G$ called the \emph{principal representation}. For this choice, the representation $\iota_0 \circ  \rho_0$ is called a \emph{principal Fuchsian representation} in $G$, and it is Anosov with respect to the minimal parabolic subgroups $P_{min}$. Hitchin components are the connected components containing a \emph{principal Fuchsian representation}. In particular, any Hitchin representation is a deformation of a principal Fuchsian representation. 
The second family are \emph{Maximal representations}, which are defined when $G$ is a real simple Lie group of Hermitian type \cite{BIW}. Maximal representations are in general only Anosov with respect to a particular maximal parabolic subgroup. Most maximal representations are deformations of twisted $\iota$-Fuchsian representations, but for $G$ locally isomorphic to $\Sp(4,\R)$ there exist connected components in the space of maximal representations where every representation is Zariski dense \cite{Gothen, GW_top, BGG_sp4}. The other two families of higher Teichm\"uller spaces arise from the notion of $\Theta$-positivity introduced in \cite{GW_pos, GW_pos2, GLW_pos}, which leads to the notion of positive representations. Hitchin representations and maximal representations are positive representations, but there are two further families of Lie groups admitting positive representations, Lie groups locally isomorphic to $\SO(p,q)$, as well as an exceptional family. 
With a positive structure comes again a special representation $\iota:\SL(2,\R) \rightarrow G$, and deformations of twisted $\iota$-Fuchsian representations account for most connected components of \emph{positive representations}, exceptions occur only for $\SO(p,p+1)$.  See \cite{GW_pos}, Guichard-Labourie-Wienhard \cite{GLW_pos}, Collier\cite{Collier}, Aparicio-Arroyo--Bradlow--Collier--Garcia-Prada--Gothen--Oliveira \cite{ABCGGO}, Bradlow--Collier--Garcia-Prada--Gothen-Oliveira \cite{BCGGO}, Guichard--Labourie--Wienhard \cite{GLW_pos}, Beyrer--Pozzetti \cite{BP} for more details on positive representations.
%
%
%

Given a representation $\rho\co\pi_1(\Sigma) \to G$ in a higher Teichm\"uller space, we can embed $G$ into its complexification $G_\C$. If $\rho$ is Anosov with respect to a parabolic subgroup $P$, the composition will be Anosov with respect to the parabolic $P_\C<G_\C$. In the complex group not every deformation will be discrete and faithful, but we can consider the space of Anosov representations $\mathrm{Anosov}_{P_\C}(\pi_1(\Sigma),G_\C)$ and the connected component of this space containing $\rho: \pi_1(\Sigma) \to G < G_\C$. This generalizes the notion of quasi-Fuchsian representation into $\PSL(2,\C)$ to this higher rank setting. Of particular interest to us will be the connected component of $\mathrm{Anosov}_{P_\C}(\pi_1(\Sigma),G_\C)$ which contains the principal Fuchsian representation $\iota_0 \circ  \rho_0$. We call this set the \emph{quasi-Hitchin space} and representations therein \emph{quasi-Hitchin representations}. Theorem~\ref{thm_intro_main} applies in particular to quasi-Hitchin representations, and Theorem~\ref{fiber_smooth} focuses on quasi-Hitchin representations for $G_\C = \Sp(4,\C)$
%

\subsection{Domains of discontinuity} \label{sub:dod}

A $P$-Anosov representation $\rho\co\Gamma \to G$ acts on all homogeneous spaces $G/Q$, where $Q$ is a proper parabolic subgroup. The theory of \emph{domains of discontinuity}, introduced by Guichard--Wienhard \cite{guichard-wienhard:anosov} and further developed by Kapovich--Leeb--Porti \cite{KLP2}, gives conditions for the existence of a $\rho$-invariant open subset $\Omega \subset G/Q$ where the action is properly discontinuous and/or cocompact. We sketch very briefly this construction here and refer the reader to  \cite{KLP2} for details.

The action of $P$ on $G/Q$ has finitely many orbits which are labelled by elements of $W_P \backslash W/W_Q$, where $W$ is the Weyl group of $G$ and $W_P, W_Q$ are the subgroups corresponding to $P$ and $Q$. A subset $I$ of $W_P \backslash W/W_Q$ corresponds to a $P$-invariant subset $K_I$ of $G/Q$ (consisting of the union of the orbits labelled by elements of $I$). The set $K_I$ is closed if and only if $I$ is an \emph{ideal} for the Bruhat order on W. 

Given $x = gP\in G/P$, set $K_I(x) = g K_I$ (this is well-defined since $K_I$ is $P$-invariant). The \emph{$I$-thickening} of a subset $A\subset(G/P)$ is the set $K_I(A) = \bigcup_{x\in A} K_I(x)$. Finally, an ideal is called \emph{balanced} if $I \cap -I = \emptyset$ and $I\cup -I = W_P\backslash W /W_Q$.

Now, let $\Gamma$ be a hyperbolic group, $\rho\co \Gamma \to G$ a $P$-Anosov representation and $\xi_\rho\co\partial_\infty \Gamma \to G/P$ the associated boundary map.

\begin{Theorem}[Kapovich--Leeb--Porti, \cite{KLP2}] \label{thm - Domains of discontinuity}
If $I \subset W_P\backslash W /W_Q$ is a balanced ideal, then $\Gamma$ acts properly discontinuously and cocompactly on the domain
\[\Omega_{\rho,I} = (G/Q) \ \setminus \ K_I(\xi_\rho(\partial_\infty \Gamma))~.\]
\end{Theorem}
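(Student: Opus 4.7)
I would split the argument into two independent parts — proper discontinuity and cocompactness — in which the two halves of the balanced condition play complementary roles. The central dynamical input, to be established first, is a north--south lemma on $G/Q$: for any sequence $\gamma_n \in \Gamma$ with $\gamma_n \to \gamma_+$ and $\gamma_n^{-1} \to \gamma_-$ in $\partial_\infty\Gamma$, any compact $C \subset (G/Q) \setminus K_I(\xi_\rho(\gamma_-))$, and any open neighborhood $V$ of $K_I(\xi_\rho(\gamma_+))$, one has $\rho(\gamma_n) C \subset V$ for $n$ large. To prove this, I would transport the strong dynamics-preserving property of $\xi_\rho$ on $G/P$ to $G/Q$ via the Bruhat decomposition of $G/Q$ relative to $\xi_\rho(\gamma_-)$: the singular-value gap coming from the Anosov definition forces each Bruhat cell in this decomposition to be contracted onto a stratum attached to $\xi_\rho(\gamma_+)$, and the ideal property (closedness of $K_I$) ensures that the limits of these contractions respect the prescribed closed stratification.

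For proper discontinuity, given a compact $K \subset \Omega_{\rho,I}$ and $\gamma_n \to \infty$ in $\Gamma$, I would extract a subsequence with $\gamma_n \to \gamma_+$ and $\gamma_n^{-1} \to \gamma_-$. Since $K \subset \Omega_{\rho,I}$ avoids every thickening $K_I(\xi_\rho(\eta))$, in particular $K \cap K_I(\xi_\rho(\gamma_-)) = \emptyset$, so $K$ sits in the basin to which the north--south lemma applies. The lemma then pushes $\rho(\gamma_n) K$ into an arbitrarily small neighborhood of $K_I(\xi_\rho(\gamma_+)) \subset (G/Q) \setminus \Omega_{\rho,I}$. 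The disjointness half of the balanced condition, $I \cap (-I) = \emptyset$, intervenes at this step: it is what ensures that the attracting and repelling closed sets of the north--south dynamics are disjoint and both lie outside $\Omega_{\rho,I}$, so that $\rho(\gamma_n) K \cap K = \emptyset$ for all $n$ large.

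For cocompactness I would argue by contradiction: suppose there is a sequence $(q_n)$ in $\Omega_{\rho,I}$ whose $\Gamma$-orbits have no accumulation inside $\Omega_{\rho,I}$, and pass to a subsequence with $q_n \to q_\infty \in K_I(\xi_\rho(\eta))$ for some $\eta \in \partial_\infty\Gamma$. Here the exhaustion half of the balanced condition, $I \cup (-I) = W_P\backslash W/W_Q$, becomes essential, as it yields the partition $G/Q = K_I(x) \sqcup K_{-I}(y)$ for every transverse pair $(x,y)$. Using the convergence-group dynamics of $\Gamma$ on $\partial_\infty\Gamma$, I would select an auxiliary sequence $\gamma_n \in \Gamma$ whose attracting fixed points tend to $\eta$ and whose repelling fixed points stay uniformly away from $\eta$; the balanced partition then guarantees that the translates $\rho(\gamma_n^{-1}) q_n$ lie in the basin of the north--south lemma applied to $\rho(\gamma_n^{-1})$, so up to a further subsequence they converge into a compact subset of $\Omega_{\rho,I}$, contradicting the hypothesis. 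The main technical obstacle in this second half is the quantitative coordination of the auxiliary sequence: the expansion rate of $\rho(\gamma_n^{-1})$ transverse to $\xi_\rho(\eta)$ must beat the rate at which $q_n$ approaches $q_\infty$, and it is precisely here that the uniform singular-value gap underlying the Anosov property is indispensable.
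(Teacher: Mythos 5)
This theorem is stated in the paper purely as a citation to Kapovich--Leeb--Porti \cite{KLP2}; the paper gives no proof of its own, so there is no internal argument to compare your sketch against. That said, your outline does reflect the broad architecture of the KLP argument (and the earlier Guichard--Wienhard one), so it is worth commenting on where the sketch is reliable and where it glosses over real difficulties.

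On proper discontinuity your structure is essentially right: one reduces to a north--south contraction lemma on $G/Q$, transported from the strong-dynamics-preserving property on $G/P$ via the Bruhat stratification. But the key lemma as you state it --- compacts missing $K_I(\xi_\rho(\gamma_-))$ get pushed into arbitrarily small neighborhoods of $K_I(\xi_\rho(\gamma_+))$ --- is not a consequence of the ideal (closedness) property alone. The dynamics sends a point whose relative position to $\xi_\rho(\gamma_-)$ lies in $I^c$ to a limit whose relative position to $\xi_\rho(\gamma_+)$ lies in $-(I^c)$, and the inclusion $-(I^c)\subset I$ that you need for the conclusion is precisely the exhaustion half $I\cup(-I)=W_P\backslash W/W_Q$ of balancedness. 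So the clean division of labor you propose (disjointness for proper discontinuity, exhaustion for cocompactness) is a useful mnemonic, but the conditions are more entangled than the sketch suggests: the north--south lemma you lean on for \emph{both} halves already consumes part of the balanced hypothesis, and the two ``slim/fat'' roles in KLP are attached to which thickening controls the repelling set versus which controls the limit set, not to the two halves of the theorem.

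The cocompactness half is where your sketch is thinnest. In KLP the cocompactness argument is not a soft contradiction via convergence dynamics; it requires a careful expansion/contraction estimate along the limit set coupled with a topological argument (and Guichard--Wienhard's original route was a cohomological dimension count). The ``quantitative coordination of the auxiliary sequence'' you flag as the main technical obstacle is indeed where the work is, but as written the argument is at the level of a hope rather than a proof: you would need to produce the auxiliary $\gamma_n$ with the right expansion behaviour uniformly in $q_n$, and to show that the recovered limit lands in a \emph{fixed} compact fundamental-domain-like set, neither of which follows just from convergence-group dynamics on $\partial_\infty\Gamma$. I would treat that paragraph as a plan to be executed, not an argument.
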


\begin{Remark} 
If the ideal satisfies $I\cup -I = W_P\backslash W /W_Q$, the construction still gives rise to a domain of discontinuity. But then the action of $\Gamma$ on $\Omega_{\rho,I}$ is not necessarily cocompact. 
\end{Remark}

\begin{Remark}
Note that if the image of the boundary map is preserved (as a set) by a subgroup $H<G$, then this subgroup also naturally acts on $\Omega_{\rho,I}$. This is a key property of the domains of discontinuity $\Omega_{\rho,I}$ we are going to use. 
\end{Remark}

\begin{Remark}
The construction of domains of discontinuity was further generalized by Stecker--Treib \cite{ST}, who extended it to the case where $Q$ is an \emph{oriented parabolic subgroup}, i.e. a subgroup of $G$ lying between a parabolic subgroup and its identity component. The corresponding homogeneous space $G/Q$ is called an \emph{oriented flag variety}. Stecker--Treib \cite{ST} give conditions for the existence of (possibly cocompact) domains of discontinuity on $G/Q$. This generalization is interesting because some new cocompact domains of discontinuity arise that are not lifts of domains of discontinuity in the corresponding unoriented flag varieties. We refer the reader to Kapovich-Leeb-Porti \cite{KLP2} and Stecker-Treib \cite{ST} for more details. 
\end{Remark}

In order to illustrate the theory, let us now describe the example that will be studied in detail in Part \ref{part2} of this paper. Consider the case where the group $G$ is $\Sp(2n,\K)$, where $\K$ can can be $\R$ of $\C$,  and the parabolic subgroup $P$ is the stabilizer of a point in $\KP^{2n-1}$. In this case, $G/P=\KP^{2n-1}$. Every $P$-Anosov representation $\rho\co \Gamma \to \Sp(2n,\K)$ has an associated $\rho$-equivariant map
\[\xi_\rho \co \partial_{\infty}\Gamma \to \KP^{2n-1}\,.\]

We now consider as second parabolic subgroup $Q$ the stabilizer of a Lagrangian subspace in $\K^{2n}$. Then $G/Q$ is the \emph{Lagrangian Grassmannian} $\Lag(\K^{2n})$, i.e. the space of all the Lagrangian subspaces of $\K^{2n}$. The action of $P$ on $G/Q$ has only two orbits: a closed orbit consisting of Lagrangian subspaces containing the line fixed by $P$, and its complement which is open. In this case, $W_P \backslash W/W_Q$ has only two elements and admits a unique non-trivial ideal $I$, for which $K_I$
is the closed $P$-orbit. This ideal is balanced. 

For each line $\ell \in \KP^{2n-1}$ we have
\[K_\ell = K_I(\ell) = \{W \in \mathrm{Lag}(\K^{2n}) \mid \ell \subset W\} \subset \Lag(\K^{2n})~,\]
and we define the subset 
\[K_{\rho,I} = K_I(\xi(\partial_\infty \Gamma)) = \bigcup_{t \in \partial_{\infty}\pi_1(\Sigma)}K_{\xi(t)} \subset \Lag(\K^{2n})~.\]
Guichard and Wienhard \cite{guichard-wienhard:anosov} showed that the complement
\[\Omega_{\rho,I} = \mathrm{Lag}(\K^{2n}) \ \setminus\ K_{\rho,I}\] 
is a cocompact domain of discontinuity for $\rho$. The manifold 
\[M_{\rho,I} = \rho(\Gamma) \backslash \Omega_{\rho,I}\] 
is a closed manifold endowed with a geometric structure modelled on the parabolic geometry $(G, G/Q) = \left(\Sp(2n,\K), \Lag(\K^{2n})\right)$. 
Determining the topology of this quotient manifold (and more general such constructions) is one of the main focus of this paper.

\subsection{Deformations}

Consider one of our spaces $\mathcal{A} = \mathrm{Anosov}_{P,\iota,\rho_0}(\Gamma,G)$, defined in Section \ref{sub:examples anosov lattices}. Let $Q$ be a parabolic subgroup and $I$ a balanced ideal of $W_P \backslash W /W_Q$. For every representation $\rho \in \mathcal{A}$, we obtain a closed manifold $M_{\rho,I} = \rho(\Gamma) \backslash \Omega_{\rho,I}$ endowed with a geometric structure locally modelled on the parabolic geometry $(G, G/Q)$, and whose holonomy factors through
\footnote{If $\Omega_\rho$ is not simply connected, then $\Gamma$ is only a quotient of $\pi_1(M_\rho)$.}  
the representation $\rho$.

\begin{Theorem}[Guichard--Wienhard, \cite{guichard-wienhard:anosov}] \label{thm:ConstantTopology}
Let $\rho$ be a $P$-Anosov representation of a hyperbolic group $\Gamma$ into a semisimple Lie group $G$ and let $\rho'$ be a $P$-Anosov deformation of $\rho$. Then for any parabolic subgroup $Q$ of $G$ and any balanced ideal $I$ of $W_P\backslash W/W_Q$, there exists a smooth $(\rho,\rho')$-equivariant diffeomorphism from $\Omega_{\rho,I}$ to $\Omega_{\rho',I}$. In particular, $M_{\rho,I}$ and $M_{\rho',I}$ are diffeomorphic.
\end{Theorem}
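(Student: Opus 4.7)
My plan is to realize the equivariant diffeomorphism as a single fiber of a smooth locally trivial fibration obtained by letting the Anosov representation vary along a path in $\mathrm{Anosov}_P(\Gamma,G)$ and applying Ehresmann's theorem. Since $\mathrm{Anosov}_P(\Gamma,G)$ is open in $\Hom(\Gamma,G)$ and $\rho'$ is by hypothesis in the same connected component as $\rho$, I can join them by a smooth path $(\rho_t)_{t\in[0,1]}$ of $P$-Anosov representations. Two ingredients from Anosov theory are crucial: the boundary map $\xi_t = \xi_{\rho_t}\co\partial_\infty\Gamma\to G/P$ depends continuously on $t$ in the uniform topology, so the limit set $\Lambda_t := \xi_t(\partial_\infty\Gamma) \subset G/P$ varies continuously in the Hausdorff metric; and because $K_I$ is closed in $(G/P)\times(G/Q)$ and $P$-invariant, the $I$-thickening $K_I(\Lambda_t)\subset G/Q$ inherits this continuous dependence on $t$.

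With this in hand, I would form the total space
\[\widetilde{\mathcal{M}} := \bigl\{(t,x)\in[0,1]\times G/Q : x\in\Omega_{\rho_t,I}\bigr\},\]
which is an open submanifold of $[0,1]\times G/Q$. The group $\Gamma$ acts smoothly on $\widetilde{\mathcal{M}}$ by $\gamma\cdot(t,x)=(t,\rho_t(\gamma)x)$, and Theorem~\ref{thm - Domains of discontinuity} ensures that this action is free, properly discontinuous and cocompact on each slice $t=\text{const}$. The key step is then to promote these fiberwise properties to the total space, namely to show that the quotient $\mathcal{M} := \Gamma\backslash\widetilde{\mathcal{M}}$ is a Hausdorff smooth manifold and that the natural projection
\[\pi\co\mathcal{M}\lra[0,1]\]
is a proper surjective submersion whose fibers are the manifolds $M_{\rho_t,I}$. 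Properness amounts to a uniform cocompactness of the $\Gamma$-action along the compact parameter interval, which can be extracted by a standard compactness argument from the continuity of $t\mapsto\xi_t$, the closedness of the thickening, and the openness of the Anosov condition. Once this is established, Ehresmann's fibration theorem makes $\pi$ into a smooth locally trivial fiber bundle.

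Finally, since $[0,1]$ is contractible, Ehresmann's theorem gives a global smooth trivialization $\mathcal{M}\simeq M_{\rho,I}\times[0,1]$. Lifting this through the regular $\Gamma$-cover $\widetilde{\mathcal{M}}\to\mathcal{M}$ yields a $\Gamma$-equivariant identification $\widetilde{\mathcal{M}}\simeq\Omega_{\rho,I}\times[0,1]$ in which the $\Gamma$-action on the right-hand side at parameter $t$ corresponds, via the trivialization, to the $\rho_t$-action on $\Omega_{\rho_t,I}$. Restricting to $t=1$ produces the desired smooth $(\rho,\rho')$-equivariant diffeomorphism from $\Omega_{\rho,I}$ to $\Omega_{\rho',I}$, and passing to the $\Gamma$-quotient yields the diffeomorphism $M_{\rho,I}\simeq M_{\rho',I}$. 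The hard part of the argument is the verification that $\pi$ is a proper submersion: it is where the continuity of the boundary maps, the behavior of the $K_I$-thickening, and the uniform cocompactness of the Kapovich--Leeb--Porti domains of discontinuity along a compact family of Anosov representations must be combined.
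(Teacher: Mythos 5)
Your approach — assembling the one-parameter family $\widetilde{\mathcal M}\subset[0,1]\times G/Q$ over a path of Anosov representations, passing to the $\Gamma$-quotient, and invoking Ehresmann's fibration theorem over the contractible base — is precisely what the paper's remark after this theorem alludes to, and it is in substance the argument of Guichard--Wienhard. You correctly isolate the genuine content: showing that $\mathcal M=\Gamma\backslash\widetilde{\mathcal M}$ is a compact manifold with boundary, i.e.\ that the $\Gamma$-action on $\widetilde{\mathcal M}$ is proper and cocompact \emph{uniformly} in $t$; this requires the continuity of $t\mapsto \xi_{\rho_t}$ together with the compactness/expansion machinery underlying the cocompactness proof in \cite{guichard-wienhard:anosov, KLP2}, rather than a soft compactness argument, but granting that the rest of your outline is sound (modulo the routine point that one should apply the manifold-with-boundary form of Ehresmann's theorem, or extend the path slightly past $[0,1]$).
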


\begin{Remark}
The theorem also applies to the quotients of domains of discontinuity constructed by Stecker--Treib in oriented flag varieties.  More generally, it essentially follows from Ehresmann's fibration theorem that a smooth family of closed $(G,X)$-manifolds is locally topologically trivial.
\end{Remark}

For a $\rho \in \mathcal{A} = \mathrm{Anosov}_{P,\iota,\rho_0}(\Gamma,G)$, the topology of $M_{\rho,I}$ does not depend on~$\rho$, hence we can denote this smooth manifold by $M_{\rho_0,\iota,I}$. Thus, the space $\mathcal{A}$ can be seen as a deformation space for a family of $(G,G/Q)$-structures on a the fixed closed manifold $M_{\rho_0,\iota,I}$. This is particularly interesting for higher rank Teichm\"uller spaces, because it gives a nice geometric interpretation of these spaces. It is also interesting for the theory of geometric structures on manifolds, because it gives several interesting examples of closed manifolds with a large deformation space of geometric structures.


\section{Topology of the quotient}  \label{sec:topology_of}

\subsection{General statement}
We can now rephrase Theorem \ref{thm_intro_main}, which describes the topology of $M_{\rho_0,\iota,I}$ constructed from an Anosov deformation of an $\iota$-lattice representation. 

Let us fix a connected semisimple Lie group $H$ of real rank $1$ with finite center, a uniform torsion-free lattice $\Gamma \subset H$ and a representation $\iota$ of $H$ into some connected semisimple Lie group $G$ with finite center. Denote by $\rho_0$ the inclusion of $\Gamma$ into $H$, let $P$ be a parabolic subgroup of $G$ such that $\iota \circ \rho_0$ is $P$-Anosov, and let $\rho$ be a $P$-Anosov deformation of $\iota\circ \rho_0$. Finally, let ${S_H}$ denote the symmetric space of $H$.

\begin{Theorem} \label{thm:TopologyQuotient}
For every parabolic subgroup $Q$ of $G$ and every balanced ideal $I$ of $W_P \backslash W/W_Q$, the domain $\Omega_{\rho,I}$ is a smooth $\Gamma$-equivariant fiber bundle over the symmetric space ${S_H}$, with fiber a closed manifold $\mathfrak F$. In particular, $\Omega_{\rho,I}$ deformation retracts to $\mathfrak F$ and the manifold $M_{\rho_0,\iota,I} = \Gamma \backslash \Omega_{\rho,I}$ is a fiber bundle over the locally symmetric space $\Gamma \backslash {S_H}$ with fiber $\mathfrak F$.
\end{Theorem}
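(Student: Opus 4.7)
The plan is to follow the strategy sketched in the introduction: first reduce to the ``Fuchsian'' case $\rho = \iota \circ \rho_0$ via Theorem~\ref{thm:ConstantTopology}, then promote the properly discontinuous $\Gamma$-action on the domain to a proper $H$-action, and finally invoke Lemma~\ref{lem:proper action} to extract the equivariant fibration. By Theorem~\ref{thm:ConstantTopology}, the domains $\Omega_{\rho,I}$ and $\Omega_{\iota\circ\rho_0,I}$ are $\Gamma$-equivariantly diffeomorphic, so I may assume $\rho = \iota\circ\rho_0$ and I set $\Omega_0 := \Omega_{\iota\circ\rho_0,I}$. The essential feature of this case is that the $\Gamma$-orbit map into $S_H$ is a quasi-isometry that induces a $\Gamma$-equivariant homeomorphism $\partial_\infty \Gamma \cong \partial_\infty S_H$, and through this identification $\xi_{\iota\circ\rho_0}$ factors as $\partial_\infty \Gamma \cong \partial_\infty S_H \to G/P$ with the second arrow $\iota(H)$-equivariant. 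Therefore the Schubert thickening $K_I(\xi(\partial_\infty \Gamma))$ is $\iota(H)$-invariant, and $H$ acts on $\Omega_0$ extending the $\Gamma$-action.

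Next I verify that this $H$-action on $\Omega_0$ is proper. Since $\Gamma$ is a uniform lattice in $H$, fix a compact set $C \subset H$ with $H = \Gamma \cdot C$. Given a compact $K \subset \Omega_0$ and a sequence $h_n \in H$ with $h_n K \cap K \neq \emptyset$, write $h_n = \gamma_n c_n$ with $\gamma_n \in \Gamma$ and $c_n \in C$; then $\gamma_n(CK) \cap K \neq \emptyset$, and by proper discontinuity of the $\Gamma$-action on $\Omega_0$ (Theorem~\ref{thm - Domains of discontinuity}) only finitely many $\gamma \in \Gamma$ can satisfy this relation. Passing to a subsequence, $\gamma_n$ is constant and $c_n$ converges in $C$, so $h_n$ converges.

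Lemma~\ref{lem:proper action} then furnishes a smooth $H$-equivariant map $\pi \colon \Omega_0 \to S_H$. Because $H$ acts transitively on $S_H$, $H$-equivariance forces $\pi$ to be a surjective submersion at every point, and local sections of the principal $K$-bundle $H \to S_H$ trivialize $\pi$ locally, so $\pi$ is an $H$-equivariant smooth fiber bundle of the form $\Omega_0 \cong H \times_K \mathfrak F$. Contractibility of $S_H$ yields a deformation retraction of $\Omega_0$ onto $\mathfrak F$. Since $\Gamma$ is torsion-free, its action on $S_H$ is free, so $\pi$ descends to a smooth fiber bundle $M_{\rho_0,\iota,I} \to \Gamma \backslash S_H$ with the same fiber $\mathfrak F$; compactness of both total space and base forces $\mathfrak F$ to be closed. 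The only non-routine ingredient is the construction of $\pi$ itself, which is the main obstacle: it is handled in Lemma~\ref{lem:proper action} by a barycentric averaging argument applied to an arbitrary smooth $\Gamma$-equivariant map $\Omega_0 \to S_H$.
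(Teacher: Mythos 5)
Your proposal follows the paper's proof exactly: reduce to $\rho = \iota\circ\rho_0$ via Theorem~\ref{thm:ConstantTopology}, observe that the $\iota$-equivariance of the boundary map makes $\Omega_0$ an $H$-invariant domain with proper $H$-action, and then invoke Lemma~\ref{lem:proper action} (barycentric averaging) and Lemma~\ref{lem:fiber bundle} (local trivialization via sections of $H \to H/K$) to produce the $H$-equivariant fiber bundle $\Omega_0 \to S_H$. The only material you add is an explicit verification of properness of the $H$-action, which the paper asserts without proof but which your lattice cocompactness argument supplies correctly.
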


In fact, one can say a bit more on the structure of this bundle. Let $K$ denote a maximal compact subgroup of $H$, so that ${S_H}=H/K$. Recall that, given a principal $K$-bundle $B$ over a manifold $T$ and a smooth action of $K$ on a manifold $\mathfrak F$, the $\mathfrak F$-bundle associated to $B$ is the quotient of $B\times \mathfrak F$ by the diagonal action of $K$. The projection to the first factor gives it the structure of a fiber bundle over $T$ with fiber $\mathfrak F$.

\begin{Theorem} \label{thm:BundleStructureQuotient}
In the setting of Theorem \ref{thm:TopologyQuotient}, the manifold $\mathfrak F$ admits a smooth action of the compact subgroup $K$ and the manifold $M_{\rho_0,\iota,I}$ is the $\mathfrak F$-bundle over $\Gamma \backslash {S_H}$ associated to the principal $K$-bundle
\[\Gamma \backslash H \to \Gamma \backslash {S_H}\,.\]
\end{Theorem}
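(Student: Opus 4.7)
The strategy is to reduce to the case $\rho = \iota\circ\rho_0$, where the full group $H$ acts on the domain, and then exhibit $\Omega_{\iota\circ\rho_0,I}$ as the total space of an associated bundle in the most natural possible way.

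\textbf{Step 1 (Reduction).} By Theorem~\ref{thm:ConstantTopology}, the diffeomorphism type of $M_{\rho,I}$ is constant on the component $\mathrm{Anosov}_{P,\iota,\rho_0}(\Gamma,G)$, and this common manifold is what we have called $M_{\rho_0,\iota,I}$. It therefore suffices to produce the associated-bundle structure on $\Gamma\backslash\Omega_{\iota\circ\rho_0,I}$. Set $\Omega := \Omega_{\iota\circ\rho_0,I}$. Since $\rho_0$ is the inclusion $\Gamma\hookrightarrow H$ of a cocompact lattice, its Anosov boundary map $\xi_0\co\partial_\infty\Gamma\to H/P_H$ extends to an $H$-equivariant homeomorphism of $\partial_\infty\Gamma\cong\partial_\infty S_H$ with $H/P_H$, so $\xi_0(\partial_\infty\Gamma)=H/P_H$ is $H$-invariant. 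Composing with $\iota$, the image $\xi(\partial_\infty\Gamma)\subset G/P$ is $\iota(H)$-invariant, hence so is its $I$-thickening, and therefore $\Omega\subset G/Q$ is $\iota(H)$-invariant. Thus we have a smooth action of $H$ on $\Omega$ through $\iota$ that commutes with the fibration of Theorem~\ref{thm:TopologyQuotient} (which, through Lemma~\ref{lem:proper action}, can be taken to be $H$-equivariant).

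\textbf{Step 2 ($K$-action on the fiber).} Let $\pi\co\Omega\to S_H=H/K$ be the smooth $H$-equivariant fibration provided by Theorem~\ref{thm:TopologyQuotient}, and let $x_0=eK\in S_H$. Set $\mathfrak F:=\pi^{-1}(x_0)$. Since $K=\mathrm{Stab}_H(x_0)$ fixes $x_0$, it preserves $\mathfrak F$, so the restriction of the $H$-action furnishes a smooth action of $K$ on $\mathfrak F$.

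\textbf{Step 3 (Identification with the associated bundle).} Define
\[
\Phi\co H\times\mathfrak F\longrightarrow\Omega,\qquad \Phi(h,f)=\iota(h)\cdot f.
\]
This is smooth, and if $K$ acts on the source by $k\cdot(h,f)=(hk^{-1},k\cdot f)$, then $\Phi(hk^{-1},k\cdot f)=\iota(h)\iota(k^{-1})\iota(k)\cdot f=\iota(h)\cdot f$, so $\Phi$ descends to a smooth map
\[
\overline\Phi\co H\times_K\mathfrak F\longrightarrow\Omega.
\]
The map $\overline\Phi$ is surjective because $H$ acts transitively on $S_H$, so every fiber of $\pi$ is an $\iota(H)$-translate of $\mathfrak F$; it is injective because if $\iota(h)\cdot f\in\mathfrak F$ for $f\in\mathfrak F$ then $\pi(\iota(h)\cdot f)=h\cdot x_0=x_0$ forces $h\in K$, whence $(h,f)$ and $(e,\iota(h)\cdot f)$ are in the same $K$-orbit. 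Smoothness of the inverse follows by pulling back local trivializations of the $H$-equivariant smooth fiber bundle $\pi$: over any open $U\subset S_H$, the bundle $\pi^{-1}(U)\to U$ is diffeomorphic to $U\times\mathfrak F$, and transporting this trivialization by $H$ shows that $\overline\Phi$ is a local diffeomorphism, hence a diffeomorphism. By construction $\overline\Phi$ is equivariant for the left $H$-action (in particular for left $\Gamma$-action) on both sides.

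\textbf{Step 4 (Quotient).} Taking $\Gamma$-quotients and using that $\Gamma$ acts freely on $H$ (hence on $H\times_K\mathfrak F$), we obtain
\[
M_{\rho_0,\iota,I}=\Gamma\backslash\Omega\;\cong\;\Gamma\backslash(H\times_K\mathfrak F)\;\cong\;(\Gamma\backslash H)\times_K\mathfrak F,
\]
which is exactly the $\mathfrak F$-bundle over $\Gamma\backslash S_H$ associated to the principal $K$-bundle $\Gamma\backslash H\to\Gamma\backslash S_H$. The main conceptual point is Step~1 (the reduction using Theorem~\ref{thm:ConstantTopology}); the only genuinely delicate verification is showing that $\overline\Phi$ is a global diffeomorphism, and this is handled by combining transitivity of $H$ on $S_H$ with the local triviality of $\pi$ as a smooth $H$-equivariant fiber bundle.
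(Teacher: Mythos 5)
Your proof is correct and takes essentially the same route as the paper: reduce to $\rho=\iota\circ\rho_0$ via Theorem~\ref{thm:ConstantTopology}, then use the smooth $H$-equivariant fibration $\pi\co\Omega\to S_H$ from Lemma~\ref{lem:proper action} to exhibit the associated-bundle structure. The only difference is cosmetic: the paper proves a general Lemma~\ref{lem:fiber bundle} showing that any $H$-equivariant map to a homogeneous space $H/L$ has the associated-bundle structure by writing down local trivializations $T(u,f)=t(u,e)f$ and matching transition functions, whereas you construct the global diffeomorphism $\overline\Phi\co H\times_K\mathfrak F\to\Omega$, $[h,f]\mapsto\iota(h)\cdot f$, directly and then descend by $\Gamma$. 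These are two presentations of the same argument.
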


In the previous theorem, the bundle $\Gamma \backslash H \to \Gamma \backslash {S_H}$ must be thought of as an explicit object that depends only on the lattice $\Gamma$. For example, when $H = PO_0(n,1)$,  $\Gamma \backslash {S_H}$ is a closed hyperbolic manifold, $\Gamma$ is its fundamental group, and the bundle $\Gamma \backslash H \to \Gamma \backslash {S_H}$ is its frame bundle.
 
In the special case when $H$ is a compact extension of $\PSL(2,\R)$, these theorems take an even more explicit form. In this case, $S_H = \mathbb{H}^2$ is the hyperbolic plane, the group  $\Gamma = \pi_1(\Sigma)$ is a surface group, and $\Gamma \backslash \mathbb{H}^2 = \Sigma$ is the surface. The principal bundle   
\[\rho_0 \backslash H \to \Sigma\]
depends on the extension $H$. For example, when $H = \PSL(2,\R)$, this bundle is a circle bundle isomorphic to the unit tangent bundle of $\Sigma$, i.e. a circle bundle with Euler class $2g-2$. When $H = \SL(2,\R)$, this bundle is the double cover of the unit tangent bundle of $\Sigma$, i.e. a circle bundle with Euler class $g-1$. For all the interesting groups $H$, it is possible to understand this bundle explicitly. We will now restate the previous theorems in the case when $H = \SL(2,\R)$.   
  
\begin{Corollary}  \label{cor:TopologyBundleSurfaces}
Let $P$ be a parabolic subgroup of $G$, $Q$ another parabolic subgroup and $I$ a balanced ideal of $W_Q\backslash W/W_P$.  Let $\rho$ be a $P$-Anosov deformation of an $\iota$-Fuchsian representation of a surface group $\pi_1(\Sigma)$. Then
\begin{itemize}
\item $\Omega_{\rho,I}$ retracts to a closed submanifold $F$ of codimension $2$ carrying a smooth circle action.
\item The quotient $\rho(\pi_1(\Sigma)) \backslash \Omega_{\rho,I}$ is diffeomorphic to a fiber bundle over $\Sigma$ with fiber $F$. This is the $F$-bundle associated to the principal circle bundle of Euler class $g-1$ over $\Sigma$.
\end{itemize}
\end{Corollary}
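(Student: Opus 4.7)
My plan is to deduce the corollary as a straightforward specialization of Theorems~\ref{thm:TopologyQuotient} and~\ref{thm:BundleStructureQuotient} to the case $H=\SL(2,\R)$, with one additional computation identifying the Euler class of the resulting principal circle bundle over $\Sigma$.

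First I would apply Theorem~\ref{thm:TopologyQuotient}: since $S_H=\HH^2$ is contractible and of real dimension $2$, the smooth $\Gamma$-equivariant fibration $\Omega_{\rho,I}\to\HH^2$ with closed fiber $F$ admits a global smooth section and is therefore trivial, giving a $\Gamma$-equivariant diffeomorphism $\Omega_{\rho,I}\cong\HH^2\times F$. Through this trivialization $F$ is realized as a closed submanifold of codimension $2$ onto which $\Omega_{\rho,I}$ deformation retracts. Next, I would invoke Theorem~\ref{thm:BundleStructureQuotient}: the maximal compact subgroup $K<\SL(2,\R)$ is $\SO(2)\cong S^1$, so the smooth $K$-action on $F$ produced there is exactly a smooth circle action, and $M_{\rho_0,\iota,I}$ is the $F$-bundle over $\Sigma=\Gamma\backslash\HH^2$ associated to the principal $\SO(2)$-bundle $\Gamma\backslash\SL(2,\R)\to\Sigma$.

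It then remains to identify this principal $\SO(2)$-bundle and to compute its Euler class. The key observation is that the isotropy representation of $\SO(2)<\SL(2,\R)$ on $T_i\HH^2$ is rotation by the \emph{double} angle: a direct computation shows that the Möbius transformation associated to $R_\theta=\bigl(\begin{smallmatrix}\cos\theta & -\sin\theta\\ \sin\theta & \cos\theta\end{smallmatrix}\bigr)$ has derivative $e^{-2i\theta}$ at $i\in\HH^2$. Hence the stabilizer in $\SL(2,\R)$ of a unit tangent vector at $i$ is $\{\pm\Id\}$, and the canonical map $\SL(2,\R)\to T^1\HH^2=\PSL(2,\R)$ realizes $\SL(2,\R)\to\HH^2$ as a connected double cover of the unit tangent bundle $T^1\HH^2\to\HH^2$ as principal circle bundles. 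Passing to the $\Gamma$-quotient, $\Gamma\backslash\SL(2,\R)\to\Sigma$ is a connected double cover of $T^1\Sigma\to\Sigma$; since $T^1\Sigma$ has Euler number $\chi(\Sigma)=2-2g$, such a connected double cover has Euler number $g-1$ (with the standard orientation convention), which gives the last statement of the corollary.

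The only subtle point is this last double-angle phenomenon: a naive identification of $\Gamma\backslash\SL(2,\R)$ with $T^1\Sigma$ would give Euler number $2g-2$ instead of $g-1$, so the main care required is to track the factor of two coming from $\{\pm\Id\}\subset\SL(2,\R)$. The existence of the Fuchsian lift $\rho_0:\pi_1(\Sigma)\to\SL(2,\R)$ requires choosing a spin structure on $\Sigma$, which is always possible since $w_2(\Sigma)=0$; the choice influences the isomorphism type of the principal $\SO(2)$-bundle only up to tensoring with a $2$-torsion line bundle and does not affect its Euler number.
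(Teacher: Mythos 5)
Your proof follows the paper's intended route: specialize Theorems~\ref{thm:TopologyQuotient} and~\ref{thm:BundleStructureQuotient} to $H=\SL(2,\R)$, $K=\SO(2)$, $S_H=\HH^2$, and then identify the principal $\SO(2)$-bundle $\Gamma\backslash\SL(2,\R)\to\Sigma$ as a connected (spin) double cover of $T^1\Sigma$; the double-angle derivative computation $R_\theta'(i)=e^{-2i\theta}$ and the spin-structure remark are exactly the right supporting observations, and the paper itself only asserts the $g-1$ identification without detail. Two minor corrections: a global smooth section does not by itself trivialize a fiber bundle (it is the contractibility of $\HH^2$ that gives triviality, and the deformation retraction onto a fiber is already part of Theorem~\ref{thm:TopologyQuotient}); and $\tfrac{1}{2}(2-2g)=1-g$, not $g-1$, so to land on $g-1$ you should either flip the fiber orientation or, as the paper does, take $T^1\Sigma$ to have Euler class $2g-2=-\chi(\Sigma)$, the usual Seifert-fibering convention.
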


One of the main applications of this corollary is for (quasi)-Hitchin representations. Recall that Hitchin representations are deformations of $\iota_0 \circ \rho_0$ where $\rho_0: \pi_1(\Sigma)\to \SL(2,\R)$ is a Fuchsian representation and $\iota_0$ is the principal representation of $\SL(2,\R)$ into a real split semisimple Lie group $G$, and quasi-Hitchin representations are their $P_{\min}$-Anosov deformations into its complexification $G_\C$. 

We can also apply Theorems \ref{thm:TopologyQuotient} and \ref{thm:BundleStructureQuotient} to the positive representations that are Anosov deformations of twisted $\iota$-Fuchsian representations. As discussed in Section \ref{sub:examples anosov surfaces}, almost all the positive representations in the classical groups are of this type, with the only exception of the exceptional components in $\Sp(4,\R)$ and $\SO(p,p+1)$. In order to apply our results to positive representations, we need to consider the group $H = \SL(2,\R) \times C$ for a certain compact subgroup $C$. The statement is similar to Corollary \ref{cor:TopologyBundleSurfaces}, except that the structure group of the bundle is now $\SO(2)\times C$. The invariants that characterize the bundle are the Euler class $g-1$ and the first and the characteristic classes of the $C$ component of $\rho_0$.

\subsection{Proof of the theorems} \label{sub:proof}

A key hypothesis in Theorem \ref{thm:TopologyQuotient} is the assumption that $\rho$ is a $P$-Anosov deformation of an $\iota$-Fuchsian representation $\iota\circ \rho_0$. Indeed, by Guichard--Wienhard's Theorem \ref{thm:ConstantTopology}, the topology of $M_{\rho,I}$ does not change, and we only have to determine it for $\rho = \iota\circ \rho_0$. 
The key result for the proof is thus the following.

\begin{Lemma} \label{lem:proper action}
Let $H$ be a semisimple Lie group with finite center, and $K \subset H$ be its maximal compact subgroup. Let $X$ be a manifold with a proper action of $H$. Then there exists an $H$-equivariant smooth fibfration 
\[p:X \rightarrow S_H~,\]
where $S_H$ denotes the symmetric space of $H$.
\end{Lemma}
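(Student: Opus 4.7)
The approach I would take follows the hint given in the text: fix a torsion-free uniform cocompact lattice $\Gamma \subset H$ (which exists by Borel's theorem combined with Selberg's lemma), build a smooth $\Gamma$-equivariant map $f_0 \co X \to S_H$, and then promote $f_0$ to an $H$-equivariant map by a barycentric averaging procedure over the compact quotient $\Gamma \backslash H$. Transitivity of the $H$-action on $S_H$ will then automatically upgrade this equivariant map to an associated $K$-bundle.

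For the first step, since $\Gamma$ is torsion-free and the $H$-action on $X$ is proper, $\Gamma$ acts freely on $X$; likewise $\Gamma$ acts freely on $S_H$. Thus $\Gamma \backslash X$ is a paracompact smooth manifold and $\Gamma \backslash S_H$ is a closed aspherical manifold that serves as a model for $B\Gamma$. The principal $\Gamma$-bundle $X \to \Gamma \backslash X$ is therefore classified by some continuous map to $\Gamma \backslash S_H$, which lifts to a $\Gamma$-equivariant continuous map $X \to S_H$; this map can be made smooth by a standard argument using smooth partitions of unity on $\Gamma \backslash X$ and the geodesic convexity of the Hadamard manifold $S_H$.

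For the averaging step, define
\[ \phi_x(g) \ := \ g^{-1} \cdot f_0(g \cdot x), \qquad g \in H, \quad x \in X. \]
A direct check using $\Gamma$-equivariance of $f_0$ gives $\phi_x(\gamma g) = \phi_x(g)$ for every $\gamma \in \Gamma$, so $\phi_x$ descends to a continuous map on the compact quotient $\Gamma \backslash H$. Its image is then bounded in the Hadamard manifold $S_H$, and so the Cartan--Karcher barycenter $F(x) := \mathrm{bar}(\phi_x)$ with respect to the pushforward of the normalized Haar measure on $\Gamma \backslash H$ is well-defined and depends smoothly on $x$. The change of variable $g \mapsto gh$, together with the right-$H$-invariance of the quotient Haar measure (which uses unimodularity of the semisimple group $H$) and the isometric equivariance of the barycenter on $S_H$, yields $F(hx) = h \cdot F(x)$.

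Finally, since $F \co X \to S_H$ is $H$-equivariant and $H$ acts transitively on $S_H$, the fundamental vector fields of the action span every tangent space of $S_H$; their images under $dF$ are again fundamental vector fields on $S_H$, so $dF_x$ is surjective at every point and $F$ is a submersion. Fixing a base point $p_0 \in S_H$ with stabilizer $K$, $H$-equivariance produces an $H$-equivariant diffeomorphism $H \times_K F^{-1}(p_0) \simeq X$, exhibiting $F$ as the smooth fiber bundle associated to the principal $K$-bundle $H \to S_H$. The main obstacle, in my view, is the averaging step: one must simultaneously verify smoothness of the barycentric average and its $H$-equivariance, which is really a classical fact about Cartan--Karcher barycenters on Hadamard manifolds but should be invoked with care; everything else in the argument is formal.
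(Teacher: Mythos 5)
Your proposal follows the paper's proof almost verbatim: fix a torsion-free uniform lattice $\Gamma\subset H$, produce a smooth $\Gamma$-equivariant map $X\to S_H$ via a classifying map for $\Gamma\backslash X\to\Gamma\backslash S_H$, average it over $\Gamma\backslash H$ using the Cartan barycenter on the Hadamard manifold $S_H$, and conclude via transitivity of the $H$-action that the resulting equivariant map is an associated fiber bundle (your $g\mapsto g^{-1}f_0(gx)$ is simply the right-coset reflection of the paper's $g\mapsto g\,f(g^{-1}x)$). The one place where you defer to ``a classical fact'' -- smoothness of the barycenter in the parameter $x$ -- is exactly where the paper supplies an actual argument: it views the barycenter as the unique critical point of the strongly convex function $y\mapsto\int d(y,\cdot)^2\,d\nu_x$, notes its Hessian in $y$ is positive definite there, and invokes the implicit function theorem (their Lemma~\ref{lem:smooth}); you should either reproduce that IFT argument or give a precise citation rather than leaving it as an acknowledged obstacle.
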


For the proof, we use the fact that the symmetric space $S_H = H/K$ has non-positive curvature.
We need the notion of barycenter: Given a finite measure of compact support $\nu$ on $S_H$, we consider the function
\[b:S_H \rightarrow \R \]
defined by
\[b(y) = \int d(y,z)^2 \mathrm d \nu(z)\,.\] 
Notice that the squared distance function is smooth, hence $b$ is also smooth. Since $S_H$ has non-positive curvature, the distance function is convex (see \cite[Thm 1.3]{BGS}), hence the squared distance function is strongly convex. 
This implies that the function $b$ is strongly convex, 
%
%
and since it is also proper, it has a unique  critical point, which is a global minimum. Moreover, the Hessian of $b$ at the global minimum is positive definite.  

The \emph{barycenter} $\mathrm{Bar}\{\nu\}$ of $\nu$ is defined to be the unique critical point of $b$.

\begin{proof}[Proof of Lemma \ref{lem:proper action}]
We choose a torsion-free uniform lattice  $\Gamma$ in $H$, this always exists, see for example Borel and Harish-Chandra \cite{BHC}. Then $\Gamma$ acts freely, properly discontinuously and cocompactly on $X$, hence the quotient $\Gamma \backslash X$ is a closed manifold. Then $\Gamma$ is isomorphic to the quotient $\pi_1(\Gamma \backslash X)/\pi_1(X)$, and we have a homomorphism $\psi\co\pi_1(\Gamma\backslash X) \to \Gamma = \pi_1(\Gamma \backslash S_H)$. Since $S_H$ is contractible there exists a map $\Gamma \backslash X \to \backslash S_H$ inducing $\psi$ (for the details, \cite[Prop. 13]{Aless_MastThesis}). A priori this map is only continuous, but since smooth maps are dense in the space of continuous maps between compact manifolds, we can assume that the map  is smooth. Lifting this map, we obtain a smooth $\Gamma$-equivariant map
\[f\co X \to S_H\,.\]

This allows us to define, for all $x\in X$, a smooth map $F^x\co H \to S_H$ 
by 
\[F^x(g) = g\cdot f(g^{-1}\cdot x)\,.\] 
Since $f$ is $\Gamma$-equivariant, we have that $F^x(g\gamma)= F^x(g)$ for all $\gamma\in \Gamma$. Hence $F^x$ descends to a map $H/\Gamma \to S_H,$ that with abuse of notation we keep calling $F^x$.

Note that we have 
\begin{equation}\label{eq:Equivariance F(x,g)}
F^{h\cdot x}(g) = g \cdot f(g^{-1} h \cdot x) = h \cdot F^{x}(h^{-1} g)~.
\end{equation}

We can finally define a map $\bar f\co X \to S_H$ as follows:
\[\bar f(x) = \mathrm{Bar} \{F^x_* \mu\}\,,\]
where $\mu$ is the Haar measure on $H/\Gamma$, $F^x_*\mu$ its push-forward by $F^x$, and $\mathrm{Bar}$ is the barycenter of a finite measure of compact support, as defined above.

We claim that $\bar f$ is $H$-equivariant. Indeed, for $x\in X$ and $g\in H$, we have
\begin{eqnarray*}
\bar f(gx) &=& \mathrm{Bar} \{ F^{gx}_*\mu\} \\
&=& \mathrm{Bar}\{g_* F^x_* g^{-1}_* \mu\} \quad \textrm{by \eqref{eq:Equivariance F(x,g)}}\\
&=& \mathrm{Bar}\{g_* F^x_* \mu \} \quad \textrm{by left invariance of the Haar measure}\\
&=& g\cdot \mathrm{Bar}\{F^x_* \mu \} \quad \textrm{by equivariance of the barycenter map}\\
&=& g\cdot \bar f(x)~.
\end{eqnarray*}

The rest follows from the two lemmas below. Lemma \ref{lem:smooth} guarantees that the map $\bar{f}$ is smooth, and Lemma \ref{lem:fiber bundle} shows that it is an Ehresmann fibration.
%
\end{proof}

\begin{Lemma} \label{lem:smooth}
The map $\bar f$, constructed in the proof of Lemma \ref{lem:proper action}, is smooth.
\end{Lemma}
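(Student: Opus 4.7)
The plan is to realize $\bar{f}$ as the solution of a smooth implicit equation and then invoke the implicit function theorem. Introduce the function
\[ B\co X \times S_H \longrightarrow \R, \qquad B(x,y) = \int_{H/\Gamma} d\bigl(y,\, g\cdot f(g^{-1}\cdot x)\bigr)^2 \, d\mu(g), \]
so that for each fixed $x$ the function $y\mapsto B(x,y)$ is precisely the strictly convex function whose unique minimum defines $\bar{f}(x)$. Equivalently, $\bar{f}(x)$ is the unique $y\in S_H$ with $d_y B(x,y)=0$, where $d_y$ denotes the differential in the second variable, viewed as a smooth section of the pullback of $T^*S_H$.

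First I would check that $B$ is jointly smooth on $X\times S_H$. Since $S_H$ is simply connected and nonpositively curved, the squared-distance function on $S_H\times S_H$ is globally smooth; combined with smoothness of $f$ and compactness of $H/\Gamma$, this justifies differentiation under the integral sign to any order, so $B\in C^\infty(X\times S_H)$. Next I would verify the nondegeneracy hypothesis of the implicit function theorem: the partial Hessian $\Hess_y B(x,y)$ must be positive definite. This follows because, in nonpositive curvature, each individual squared-distance function $y\mapsto d(y,z)^2$ has Hessian bounded below by $2\,\Id$ (a standard consequence of the CAT(0) inequality, compare \cite[Thm 1.3]{BGS}), and this uniform bound is preserved when integrating in $z$ against the probability measure $F^x_*\mu$.

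Applying the implicit function theorem to the smooth equation $d_y B(x,y)=0$ near any point $(x_0,\bar{f}(x_0))$ then produces a smooth local solution $y=\varphi(x)$, which by uniqueness of the critical point of a strongly convex function must coincide with $\bar{f}$ on a neighbourhood of $x_0$. Hence $\bar{f}$ is smooth on all of $X$. I do not expect any serious obstacle here: the argument is fully standard once the framework above is in place, and the only points needing care, namely the justification of differentiation under the integral and of the Hessian lower bound, are both immediate from compactness of $H/\Gamma$ and the standard convexity estimates on a nonpositively curved symmetric space.
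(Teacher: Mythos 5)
Your proof follows the same strategy as the paper's: write $\bar f(x)$ as the unique zero of $d_y B(x,y)$, where $B$ is the integrated squared-distance functional, check smoothness of $B$ and positive-definiteness of the Hessian in $y$, and conclude by the implicit function theorem. The only cosmetic difference is that you state the uniform lower bound $\Hess_y \geq 2\,\Id$ from the CAT(0) inequality, whereas the paper only records positive-definiteness at the critical point; both suffice for the nondegeneracy hypothesis, so the arguments are essentially identical.
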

\begin{proof}
For every $x\in X$, $\bar{f}(x)$ is the unique critical point of the function
\[b(y) = \int_{S_H} d(y,z)^2 \mathrm d \nu(z)\,,\]
where the measure $\nu$ is the push-forward $\nu = F^x_* \mu$. By the change-of-variable formula, this can be written as
\[b(y) = \int_{\Gamma \backslash H} d(y,F^x(h))^2 \mathrm d \mu(h) = \int_{\Gamma \backslash H} d(y,h f(h^{-1}x))^2 \mathrm d \mu(h)\,.\]
In this setting, the function $b$ depends on the parameter $x$, to make this more explicit, we write it as $b(x,y)$, a smooth function of two variables $x \in X$ and $y \in S_H$. We consider the differential of $b$ with reference to $y$:
\[\beta(x,y) = d_y b(x,y) : X \times S_H \rightarrow T^* S_H\,. \]
Let's fix an $x_0 \in X$ and the corresponding $y_0 = \bar{f}(x_0) \in S_H$. We choose local coordinates on a small neighborhood $U$ of $y_0$ in $S_H$. This trivializes the cotangent bundle on $U$: $T^* U \simeq U \times\R^k$, where $k = \dim(S_H)$. Let $\pi_2: T^* U \rightarrow \R^k$ denote the projection onto the second factor. Now we consider the composition 
\[\pi_2 \circ \beta(x,y) = d_y b(x,y) : X \times U \rightarrow \R^k \,. \]
Now, for all $x$ close enough to $x_0$, the pairs $(x,\bar{f}(x))$ are precisely the solutions to the equation
\[\pi_2 \circ \beta(x,y) = 0\,.\] 
We can now apply the implicit function theorem to the function $\pi_2 \circ \beta(x,y)$. The differential of this function is non-degenerate because the Hessian of the strongly convex function $b(y)$ is positive definite at its critical point. The implicit function theorem guarantees that $\bar{f}$ is smooth.
\end{proof}



\begin{Lemma}  \label{lem:fiber bundle}
Let $H$ be a Lie group, and let $X, Y$ be spaces with $H$-actions, where the action on $Y$ is transitive. Denote by $L$ the stabilizer in $H$ of a point of $Y$, and identify $Y$ with $H/L$. Then, every $H$-equivariant map $\phi:X \rightarrow Y$ is a fiber bundle, with structure group given by an action of $L$ on the fiber. The bundle is associated, via a change of fiber, to the principal $L$-bundle $H \rightarrow H/L$. 
\end{Lemma}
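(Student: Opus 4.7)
The plan is to identify the fiber $F = \phi^{-1}(y_0)$ over the distinguished point $y_0 \in Y$ (whose stabilizer is $L$), check that $L$ acts on $F$, and then exhibit $X$ as the associated bundle $H \times_L F \to H/L = Y$. This identification is the natural bundle structure for any $H$-equivariant map landing in a homogeneous space, and the local triviality of the principal bundle $H \to H/L$ will propagate to local triviality of $\phi$.

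First I would observe that $L$ preserves the fiber $F$: since $\phi$ is $H$-equivariant and every $\ell \in L$ fixes $y_0$, it sends $\phi^{-1}(y_0)$ to itself. Consider the smooth map
\[\Psi \co H \times F \longrightarrow X, \qquad (h,f) \longmapsto h \cdot f.\]
It is invariant under the right action of $L$ on $H \times F$ defined by $(h,f)\cdot \ell = (h\ell, \ell^{-1}\cdot f)$, since $h\ell \cdot \ell^{-1} f = h\cdot f$. Hence it factors through a map $\bar\Psi \co H \times_L F \to X$. Surjectivity follows from transitivity of the $H$-action on $Y$: given $x \in X$, pick $h\in H$ with $h\cdot y_0 = \phi(x)$, whence $h^{-1}\cdot x \in F$ and $(h, h^{-1}\cdot x) \mapsto x$. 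Injectivity is a direct check: if $h_1\cdot f_1 = h_2\cdot f_2$, applying $\phi$ shows $\ell := h_2^{-1}h_1 \in L$, and then the two representatives differ by $L$.

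Next I would use local sections of the principal $L$-bundle $H \to H/L$ (which exist smoothly because $H$ is a Lie group and $L$ a closed subgroup) to produce local trivializations of $\phi$. Over a small open $U \subset Y$ with section $s \co U \to H$, the inverse of $\bar\Psi$ restricted to $\phi^{-1}(U)$ is
\[\phi^{-1}(U) \longrightarrow U \times F, \qquad x \longmapsto \bigl(\phi(x),\, s(\phi(x))^{-1}\cdot x\bigr),\]
and the transition between two such sections $s_1, s_2$ is given by the $L$-valued cocycle $y \mapsto s_1(y)^{-1}s_2(y)$ acting on $F$. This shows both that $\phi$ is a locally trivial fiber bundle with fiber $F$, and that its structure group is $L$ acting on $F$ via the action inherited from $H$; by construction this bundle is precisely the one associated to $H \to H/L$ by the $L$-action on $F$.

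The only mildly delicate point is the existence of the smooth local section $s \co U \to H$ of $H \to H/L$, but this is standard for a closed subgroup of a Lie group. Once $s$ is in hand, smoothness of $\Psi$ and its inverse on each trivializing chart is immediate, so $\bar\Psi \co H\times_L F \to X$ is an $H$-equivariant diffeomorphism of fiber bundles over $Y$, concluding the proof.
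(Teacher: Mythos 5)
Your proof is correct and follows essentially the same approach as the paper: both exploit local sections of the principal bundle $H \to H/L$ to build local trivializations of $\phi$ via $(u,f) \mapsto s(u)\cdot f$, and both note the transition functions land in $L$, exhibiting the bundle as associated to $H \to H/L$. The only stylistic difference is that you first establish the global identification $X \cong H \times_L F$ explicitly before localizing, whereas the paper works directly with local charts and remarks that the atlases share transition functions.
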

\begin{proof}
First note that, by homogeneity of $Y$, the map $\phi$ needs to be onto. We will construct a local trivialization around every point $y \in Y$. We choose the subgroup $L$ as the stabilizer of $y$. Hence, $L$ is acting on the fiber $F = \phi^{-1}(y)$. 
Let $U$ be a neighborhood of $y$ in $Y$ that trivializes the bundle $H \rightarrow H/L$. The trivialization is a map $t:U \times L \rightarrow H$.  

A trivialization of $\phi$ over $U$ is given by the map
\[ T: U \times F \ni (u,f) \rightarrow t(u,e)f \in X\,,\]
where $e$ is the identity of $H$. Clearly, $\phi(T(u,f)) = u$, because $t(u,e)$ sends $y$ to $u$. 
The map $T$ is 1-1 because if $t(u,e)f = t(u',e)f'$, then $u = u'$  because $\phi(T(u,f))=u$, and then by multiplying by $t(u,e)^{-1}$ we see that $f=f'$. We can also see that the map $T$ is onto $\phi^{-1}(U)$, because given $x \in \phi^{-1}(U)$, let $f = t(\phi(x),e)^{-1}x \in F$, and then $x = T(\phi(x),f)$. 

The construction above shows that every atlas for the bundle $H \rightarrow H/L$ induces an atlas for the bundle $\phi: X \rightarrow Y$. It is easy to check that the two atlases have the same transition functions, hence the two bundles are associated.
\end{proof}

The following Proposition~\ref{prop:lattice case} is similar to Theorems~\ref{thm:TopologyQuotient} and \ref{thm:BundleStructureQuotient}, but the difference is that it can be applied to domains of discontinuity that are not necessarily cocompact. Anyway, if the domain is not cocompact, our conclusion only holds for $\iota$-lattice representations, but does not automatically extend to their deformations. After that, we will add the hypothesis that the domain of discontinuity is cocompact and prove the full Theorems~\ref{thm:TopologyQuotient} and \ref{thm:BundleStructureQuotient} for their deformations.





\begin{Proposition} \label{prop:lattice case}
Let $H$ be a connected semisimple Lie group with finite center of real rank $1$, $K \subset H$ a maximal compact subgroup, and $S_H = H/K$ be the symmetric space for $H$. Let $\rho_0:\Gamma \rightarrow H$ be the inclusion of a torsion-free uniform lattice in $H$. Let $G$ be a connected semi-simple Lie group with finite center, and $\iota:H \rightarrow G$ be a representation. Let $P < G$ be a parabolic subgroup of $G$ such that the representation $\rho = \iota \circ \rho_0$ is $P$-Anosov.  

Let $Q$ be a parabolic subgroup of $G$
%
%
%
 and $\Omega_{\rho,I} \subset G/Q$ a domain of discontinuity for $\rho$ constructed from a thickening of a (not necessarily balanced) ideal $I$. Let $M_{\rho,I} = \rho \backslash \Omega_{\rho,I}$ be the quotient manifold.

Then $M_{\rho,I}$ is diffeomorphic to a smooth fiber bundle over $S_\Gamma$. The fiber $F$ of the bundle is homotopically equivalent to the domain $\Omega_\rho$, and carries a $K$-action that gives the bundle a structure of $K$-bundle. The bundle is isomorphic to the $K$-bundle associated to the $K$-principal bundle $\Gamma \backslash H \rightarrow \Gamma \backslash S_H$ via a change of fiber.
\end{Proposition}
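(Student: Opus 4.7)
The plan is to apply the machinery of Lemmas \ref{lem:proper action} and \ref{lem:fiber bundle} directly to the $H$-action on $\Omega_{\rho,I}$, so the first thing to verify is that $H$ acts on $\Omega_{\rho,I}$ via $\iota$ and that this action is proper. For invariance, note that the lattice inclusion $\Gamma \hookrightarrow H$ identifies $\partial_\infty \Gamma$ with $\partial_\infty S_H = H/P_H$, and the boundary map $\xi_\rho = \iota \circ \xi_{\rho_0} \colon \partial_\infty \Gamma \to G/P$ is $\iota$-equivariant; hence its image is a single $\iota(H)$-orbit, the thickening $K_I(\xi_\rho(\partial_\infty \Gamma))$ is $\iota(H)$-invariant, and the complement $\Omega_{\rho,I}$ inherits a continuous $H$-action.

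Next I would establish that this $H$-action is proper. Fix a compact $C \subset \Omega_{\rho,I}$ and a relatively compact fundamental domain $D \subset H$ for $\Gamma$. Any $h \in H$ with $hC \cap C \neq \emptyset$ decomposes as $h = \gamma d$ with $\gamma \in \Gamma$ and $d \in \overline D$, and the compact set $B := C \cup \overline{D} \cdot C$ then satisfies $\gamma B \cap B \neq \emptyset$; proper discontinuity of $\Gamma$ restricts such $\gamma$ to a finite set $S$. Thus $\{h \in H : hC \cap C \neq \emptyset\}$ is contained in $\bigcup_{\gamma \in S} \gamma \overline D$, hence relatively compact, which gives properness.

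With these two ingredients in place, Lemma \ref{lem:proper action} furnishes a smooth $H$-equivariant fibration $p \colon \Omega_{\rho,I} \to S_H$, and Lemma \ref{lem:fiber bundle}, applied with $Y = S_H$ and $L = K$, upgrades $p$ to a fiber bundle whose fiber $\mathfrak F := p^{-1}(eK)$ carries a smooth $K$-action and which is associated, via change of fiber, to the principal $K$-bundle $H \to S_H$. Since $p$ is $\Gamma$-equivariant, passing to the quotient yields the desired fiber bundle $M_{\rho,I} \to \Gamma \backslash S_H$ with fiber $\mathfrak F$, associated to the principal $K$-bundle $\Gamma \backslash H \to \Gamma \backslash S_H$.

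Finally, contractibility of $S_H$ combined with the smooth local triviality of $p$ implies that a contraction of $S_H$ onto $eK$ lifts to a deformation retraction of $\Omega_{\rho,I}$ onto $\mathfrak F$, yielding the claimed homotopy equivalence. The step requiring most care is verifying properness of the $H$-action on $\Omega_{\rho,I}$; everything else is a direct assembly of the lemmas already developed earlier in the section.
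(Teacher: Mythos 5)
Your proposal follows the same strategy as the paper's proof: observe that the $\iota$-equivariance of the boundary map makes $\Omega_{\rho,I}$ an $H$-invariant set on which $H$ acts properly, then apply Lemmas \ref{lem:proper action} and \ref{lem:fiber bundle} to produce the smooth equivariant fibration over $S_H$ and pass to the quotient. The one place you go beyond the paper is that you actually verify properness of the $H$-action (by decomposing $h = \gamma d$ over a relatively compact fundamental domain and invoking proper discontinuity of $\Gamma$ on $\Omega_{\rho,I}$), whereas the paper simply asserts it; your verification is correct and fills a genuine gap in the exposition.
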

\begin{proof}
As we saw in Section \ref{sub:examples anosov lattices}, it was proved in Guichard--Wienhard \cite[Prop. 4.7]{guichard-wienhard:anosov} that the representation $\rho$ is $P$-Anosov for a certain family of parabolic subgroups described there. 
Moreover, since $\Gamma$ is a lattice in $H$, we have that $\partial_\infty \Gamma = H/P_H$, so $H$ acts on $\partial_\infty \Gamma $ and the Anosov limit map is in fact $\iota$-equivariant. This implies that $H$ preserves the domain of discontinuity $\Omega_\rho$, and acts properly on $\Omega_\rho$. 
Applying Lemmas \ref{lem:proper action} and \ref{lem:fiber bundle}, we get a smooth $H$-equivariant fiber bundle map from $\Omega_\rho$ to $S_H$, which factors to a smooth fiber bundle map from $M$ to $S_\Gamma$. 
\end{proof}

\begin{proof}[Proof of Theorems~\ref{thm:TopologyQuotient} and \ref{thm:BundleStructureQuotient}]
In Proposition \ref{prop:lattice case}, we have already proved the theorem for twisted $\iota$-Fuchsian and for lattice representations. Now, since we are assuming that the domain of discontinuity is cocompact, it follows from Theorem~\ref{thm:ConstantTopology} that the topology of $M$ is constant in  $\mathcal{A}$. 
\end{proof}

\part{Quasi-Hitchin representations into $\Sp(4,\C)$} \label{part2}

In the second part of the paper, we will focus on quasi-Hitchin representations into  $G = \Sp(4,\C)$. We fix the principal representation $\iota_0 \co \SL(2, \R) \to G$ and a Fuchsian representation $\rho_0 \co \pi_1(\Sigma) \to \SL(2, \R)$. Recall that, for every parabolic subgroup $P$ of $\Sp(4,\C)$, a $P$-quasi-Hitchin representation is a $P$-Anosov deformation of $\iota_0\circ\rho_0$. Our aim is to determine the topology of the quotient manifold of the cocompact domains of discontinuity for these representations. 

The group $\Sp(4,\C)$ has (up to conjugation) three different proper parabolic subgroups, so there are three flag varieties for us to consider, the projective space $\CP^3$, the Lagrangian Grassmannian $\Lag(\C^4)$, and the full flag variety, which consists of full isotropic flags, i.e. pairs consisting of a line in $\C^4$ and a Lagrangian subspace of $\C^4$ containing that line.
The principal Fuchsian representation $\iota_0\circ\rho_0$ admits four cocompact domains of discontinuity constructed by a balanced thickening: one in the projective space $\CP^3$, one in the Lagrangian Grassmannian $\Lag(\C^4)$ (whose construction is described in Section \ref{sub:dod}), and two in the full isotropic flag variety. The two domains of discontinuity in the full isotropic flags variety are in fact the pull back of the two domains in  $\CP^3$ and in $\Lag(\C^4)$ under the natural projection from the full flag variety to the partial flag varieties, hence they can be understood from a description of the latter two. The domain of discontinuity in $\CP^3$ was described in Alessandrini--Davalo--Li \cite[Corol. 10.2]{AleDavLi}, where it is proved that the quotient manifold $M$ is diffeomorphic to a fiber bundle over the surface $\Sigma$ with fiber $\mathbb{S}^2 \times \mathbb{S}^2$. 

The only cocompact domain of discontinuity that is not yet understood is the one in $\Lag(\C^4)$. The second part of this paper is devoted to the description of this domain and its quotient manifold. In fact, the domain of discontinuity in the Lagrangian Grassmannian is of particular interest because it contains two copies of the symmetric space associated to $\Sp(4,\R)$, the Siegel upper half space, and the Siegel lower half space, see \cite{Wienhard_thurston}. This is very reminiscent of the situation for quasi-Fuchsian representations, and we hope that a good understanding of the domain of discontinuity and its quotient manifold might help to shed some light on possible generalizations of the Bers' double uniformization theorem for quasi-Hitchin representations.

The construction of the domain of discontinuity in $\Lag(\C^4)$, described in Section~\ref{sub:dod}, only uses the fact that the representation is Anosov with respect to $P$, where $P$ is the stabilizer of a point in $\CP^3$. We will thus consider a representation $\rho$ in the quasi-Hitchin space $\mathrm{QHit}_{P}(\Sigma, \Sp(4,\C)) := \mathrm{Anosov}_{P,\iota_0,\rho_0}(\pi_1(\Sigma),\Sp(4,\C))$. 

There is a unique non-trivial ideal $I$, which allows us to define a domain of discontinuity $\Omega_{\rho,I}$, with quotient manifold $M_{\rho,I}$, see Section \ref{sub:dod}. Since, by Theorem~\ref{thm:ConstantTopology}, the topology of the quotient manifold doesn't depend on $\rho$, we can restrict our attention to the case when $\rho = \iota_0\circ \rho_0$. With the representation fixed once for all, we will denote the domain of discontinuity and the quotient manifold simply by $\Omega$ and $M$, instead of $\Omega_{\rho,I}$ and $M_{\rho,I}$.

Our Theorem \ref{thm_intro_main} gives smooth fibrations
\[p\co\Omega \to \HH^2 \;\; \text{ and } \;\;\widehat{p}\co M \to \Sigma.\] 
In this second part of the paper we will study the fiber $\mathfrak{F} = \mathfrak{F}_p$ of these maps, and prove Theorem \ref{fiber_smooth}, which states that $\mathfrak{F}$ is homeomorphic to the $4$-manifold $\C\mathbb{P}^2 \# \overline{\C\mathbb{P}}^2$.

Our strategy will be to describe a new $\rho$-equivariant fibration
\[q\co\Omega \to \HH^2 \;\; \text{ and } \;\;\widehat{q}\co M \to \Sigma.\] 
which is not smooth, but has a more geometric definition and for which the fiber $F = F_q$ is easier to understand. This new fiber $F$ is not a manifold, but it is homotopically equivalent to the domain of discontinuity $\Omega$, hence $F$ is homotopically equivalent to the smooth fiber $\mathfrak{F}$. We will see that $F$ is simply connected, hence $\mathfrak{F}$ is a simply connected $4$-manifold. A smooth simply connected $4$-manifold is determined up to homeomorphism by its homotopy type, hence we can determine $\mathfrak{F}$ by computing the homotopy invariants of $F$.

In order to describe the fibration $q$ we will study the action of $\SL(2,\C)$ on the Lagrangian Grassmannian $\Lag(\C^4)$ induced by the principal representation $\iota_0$. In Section \ref{sl2orbits} we will discuss the $\SL(2,\C)$-orbits in $\mathrm{Lag}(\C^{4})$, and this discussion will allow us to define $q$ at the end of the section. In Section \ref{sub:new_york} we will study the fiber $F$, and in Section \ref{fiber} we will use our results on the topology of $F$ to understand $\mathfrak{F}$. 


\section{$\SL(2,\C)$-orbits of $\mathrm{Lag}(\C^{4})$}  \label{sl2orbits}

In this section, we will study the action of $\SL(2,\C)$ on $\mathrm{Lag}(\C^{4})$ and its orbits. This will allow us to define the projection $q\co\Omega \to \HH^2$ explicitly.
\subsection{Lagrangian subspaces in $\C^{2n}$}

Let $\K \in \{\R, \C\}$ and let $V_{1, \K} = \K_1[X, Y] \cong \K^2$ be the space of homogeneous polynomials of degree one in the variables $X$ and $Y$, endowed with the symplectic form determined by $\omega_{1, \K}(X,Y) = 1$. The induced action of $\Sp(V_{1, \K}, \omega_{1, \K}) \cong \Sp(2, \K) \cong \SL(2, \K)$ on $V_{n, \K} = \mathrm{Sym}^{2n-1}(V_{1, \K}) = \K^{(2n-1)}[X, Y] \cong \K^{2n}$ preserves the symplectic form $\omega_{n, \K} = \mathrm{Sym}^{2n-1}\omega_{1, \K}$, which is given by 
\[ \left\{
  	\begin{array}{ll}
  		\omega_{n, \K}(P_k, P_l) = 0  & \mbox{if } k+l \neq 2n-1 \\
  		\omega_{n, \K}(P_k, P_{2n-1-k}) = (-1)^k \frac{k! (2n-1-k)!}{(2n-1)!}, & 
  	\end{array}
  \right.
\] 
where $P_k = X^{2n-1-k}Y^k$ for $k = 0, \ldots, 2n-1$. These formulae become more explicit in the case where $n=2$, which is the case we are mainly interested in in the following:
\begin{equation} \label{eq:omega}
\left\{
  	\begin{array}{ll}
  		\omega_{2, \K}(X^3, Y^3) = 1 \,,\\
  		\omega_{2, \K}(X^2 Y, X Y^2) = -\frac{1}{3} \,, & 
  	\end{array}
  \right.
\end{equation} 
all other pairings being zero. 
 
Moreover, this induced action defines the (unique) $(2n)$--dimensional irreducible representation 
\[\pi_{2n}\co \mathrm{Sp}(V_{1, \K}, \omega_{1, \K}) \cong \Sp(2, \K) \cong \mathrm{SL}(2,\K) \to \Sp(V_{n, \K}, \omega_{n, \K}) \cong  \Sp(2n, \K)\,.\]
A vector subspace $L \subset V_{n, \K}$ is called \textit{isotropic} if $L \subset L^{\perp_{\omega_{n, \K}}}$, where $L^{\perp_{\omega_{n, \K}}}$ is the orthogonal complement with respect to $\omega_{n, \K}$. An isotropic subspace $L \subset V_{\K}$ is maximal if it has dimension $n$, or equivalently if $L = L^{\perp_{\omega_{\K}}}$. In this case $L$ is called a {\em Lagrangian} subspace. Using the fact that $\omega_{n, \K}$ is skew-symmetric, we can see that all the subspaces of $V_{n, \K}$ of dimension one are isotropic, hence the space of $1$-dimensional isotropic subspaces can be identified with the projective space $\mathbb{P}(V_{n, \K})$. We denote the space of $n$--dimensional isotropic (Lagrangian) subspaces of $V_{n, \K}$ by $\mathrm{Lag}(V_{n,\K})$, and we will call it the \emph{Lagrangian Grassmannian}. We can think of it as a subspace of the Grassmannian of $n$-dimensional subspaces in $V_{n, \K}$.
 
Recall that $\SL(2, \C) \cong \Sp(2, \C)$ acts on $\mathbb{P}(\C^{(n-1)}[X, Y]) \cong \C\mathbb{P}^{n-1}$ by acting on the roots of the polynomials in $\C^{(n-1)}[X, Y]$, and this action naturally defines an action of $\SL(2, \C)$ on $\mathrm{Lag}(V_{n, \K})$. 
  
One last thing we want to recall about this Lagrangian Grassmanian is its topology. There are different ways to describe the topology, but the way we will mostly use in this paper is via the subspace topology inherited from the Grassmanian space, whose topology can be described using the Pl\"ucker map or Pl\"ucker coordinates. In our case the Pl\"ucker map is an embedding of the Grassmanian of $n$--planes in $V_{n, \K}$ into the projectivization of the $n$-th exterior power of $V_{n, \K}$:
  \[\mathrm{Gr}(n, V_{n, \K}) \to \mathbb{P}\left(\Lambda^n(V_{n, \K})\right)\,,\] 
which realizes $\mathrm{Gr}(n,V_{n, \K})$ as an algebraic variety, since the image consists of the intersection of a number of quadrics defined by the Pl\"ucker relations. To write these relations, we need to be more precise. Given $W \in\mathrm{Gr}(n,V_{n, \K})$, the $n$--dimensional subspace spanned by the basis of column vectors $W_1, W_2, \ldots, W_n$ in $V_{n, \K}$, let $\widehat{W}$ be the $(2n) \times n$ matrix of homogeneous coordinates, whose columns are $W_1, W_2, \ldots, W_n$. For any ordered sequence $1 \leq i_1 < i_2 < \cdots < i_n\leq 2n$ of $n$ integers, let $W_{i_1, \ldots, i_n}$ be the determinant of the $n \times n$ matrix given by the rows $i_1, \ldots, i_n$ of $\widehat{W}$. Then, the relation is given by 
$$\sum_{l=1}^{n+1} W_{i_1, \ldots, i_{n-1}, j_l} W_{j_1, \ldots,\hat{j}_l, \ldots j_{n+1}} = 0,$$
for $1 \leq i_1 < i_2 < \cdots < i_{n-1}\leq 2n$ and $1 \leq j_1 < j_2 < \cdots < j_{n+1}\leq 2n$ and where $\hat{j}_l$ denotes the fact that the $j_l$ term is omitted. For example, in the case $n = 2$, which will be the main focus of the article, we will have coordinates $W_{1, 2}$, $W_{1, 3}$, $W_{1, 4}$, $W_{2, 3}$, $W_{2, 4}$ and $W_{3, 4}$, with the relation
\[W_{1, 2}W_{3, 4} - W_{1, 3}W_{2, 4}+W_{1, 4}W_{2, 3}= 0\,.\]
The condition that the space $W$ is Lagrangian gives one additional polynomial equation, so that $\Lag(\C^4)$ is a projective variety of complex dimension $3$.
 
\subsection{$\SL(2,\C)$-orbits of $\mathrm{Lag}(\C^{4})$}
  
From now on, we focus on dimension $4$. So, let $V_{\K} = \K^{(3)}[X, Y] \cong \K^4$ be the symplectic space of homogeneous polynomials of degree $3$ in $X$ and $Y$, equipped with the symplectic form $\omega_{\K} = \omega_{2, \K}$ defined above. We define the {\it Veronese embeddings}
$$ \xi^1_{\C} \co \C\mathbb{P}^1 \to \C\mathbb{P}^3 \;\;\;\;\; \text{ and }\;\;\;\;\;\xi^2_{\C} \co \C\mathbb{P}^1 \to \mathrm{Lag}(\C^4)$$
by 
\begin{equation*}
 \begin{split}
 \xi^1_\C([a:b]) &:= \langle (bX-aY)^3 \rangle\in \C\mathbb{P}^3\\
 \xi^2_\C([a:b]) &:= \langle (bX-aY)^3, (dX-cY)(bX-aY)^2\rangle \in \mathrm{Lag}(\C^4),
 \end{split}
 \end{equation*}
 where $[c:d]$ is any point in $\C\mathbb{P}^1 \setminus \{[a:b]\}$. Let
$$
 \xi^1 = \xi^1_{\R} = \xi^1_{\C} \mid_{\R\mathbb{P}^1} \;\;\;\;\; \text{ and } \;\;\;\;\;\xi^2 = \xi^2_{\R} = \xi^2_{\C} \mid_{\R\mathbb{P}^1}.
$$
 
Recall, from Section \ref{sub:dod}, that for a line $\ell \in \CP^{3}$ we defined  
\[K_\ell = \{W \in \mathrm{Lag}(\C^{4}) \mid \ell \subset W\} \subset \Lag(\C^{4})~.\]
We now introduce the set 
\[K_\C = \bigcup_{t \in \C\mathbb{P}^1} K_{\xi^1_\C(t)} \subset \Lag(\C^{4})~.\]
Using the definition of $\xi^1_\C$, we can see that $K_{\C}$ is the set 
\[K_{\C} = \{W \in \mathrm{Lag}(\C^{4}) \ \mid\ \exists\; p= (b X- a Y)^3 \in W,\; [a:b]\in \C\mathbb{P}^1\}\]
of Lagrangian subspaces that contain a polynomial with a triple complex root $[a:b] \in \C\mathbb{P}^1$.

\begin{Lemma}
$K_{\C}$ is the set of Lagrangians $W \in \mathrm{Lag}(\C^{4})$ with a common root, i.e.
\[K_{\C} = \{W \in \mathrm{Lag}(\C^{4}) \ \mid\ \exists\; [a:b]\in \C\mathbb{P}^1,\; \forall p \in W,\; p(X,Y) = (b X- aY) q(X,Y)\}\,.\] 
\end{Lemma}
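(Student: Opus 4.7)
The plan is to prove the two inclusions separately, each by a direct computation with the symplectic form $\omega = \omega_{2,\C}$ from \eqref{eq:omega}. Because $\SL(2,\C)$ acts transitively on $\C\mathbb{P}^1$ and the whole setup is $\SL(2,\C)$-equivariant (both $\omega$ and the Lagrangian condition are preserved), I would fix at the outset the normalization $[a:b] = [0:1]$, so that $(bX-aY)^3 = X^3$, and so that the statement ``$[a:b]$ is a root of $p$'' becomes ``$X$ divides $p$''.

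For the forward inclusion $K_\C \subseteq \{\text{Lagrangians with a common root}\}$, I would start with $W \in K_\C$ containing $X^3$ and pick an arbitrary $p = \alpha X^3 + \beta X^2Y + \gamma XY^2 + \delta Y^3 \in W$. The Lagrangian condition forces $\omega(X^3, p) = 0$; using \eqref{eq:omega}, only the pairing with $Y^3$ contributes, yielding $\delta = 0$, hence $X \mid p$. Thus every element of $W$ has $[0:1]$ as a root.

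For the reverse inclusion, let $W$ be Lagrangian with every $p \in W$ divisible by $X$. Then $W$ sits inside the $3$-dimensional subspace $U = \langle X^3, X^2Y, XY^2 \rangle$. Using \eqref{eq:omega} I would compute $\omega|_U$: the only nonzero pairing among basis vectors is $\omega(X^2Y, XY^2) = -\tfrac{1}{3}$, so $X^3$ is a null vector orthogonal to all of $U$. More precisely, a direct computation gives $U^{\perp_\omega} = \langle X^3 \rangle$, so $U$ is coisotropic and $U^{\perp_\omega} \subset U$. Since $W \subset U$ implies $U^{\perp_\omega} \subset W^{\perp_\omega}$, and since $W$ is Lagrangian so $W^{\perp_\omega} = W$, we conclude $X^3 \in W$, i.e.\ $W \in K_\C$.

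There is no real obstacle here; the lemma is essentially a clean symplectic linear-algebra fact. The only thing to be careful about is to record explicitly the coisotropy of $U$ (that $U^{\perp_\omega} = \langle X^3 \rangle$) since this is what makes the Lagrangian $W \subset U$ automatically contain the triple-root polynomial $X^3$, and then to undo the $\SL(2,\C)$-normalization to obtain the conclusion for an arbitrary $[a:b] \in \C\mathbb{P}^1$.
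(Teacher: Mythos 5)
Your proof is correct, and the forward inclusion is identical to the paper's argument. The reverse inclusion, however, takes a genuinely different and more structural route. The paper computes $\omega$ on a pair of elements $a_iX^3+b_iX^2Y+c_iXY^2$ of $W$, finds the constraint $\tfrac{1}{3}(c_1b_2-b_1c_2)=0$, deduces that the ratio $b{:}c$ is constant across $W$, and then subtracts two suitably scaled basis vectors to exhibit an explicit multiple of $X^3$ in $W$. You instead observe that every $p\in W$ lies in the $3$-dimensional coisotropic subspace $U=\langle X^3, X^2Y, XY^2\rangle$ with $U^{\perp_\omega}=\langle X^3\rangle$, and invoke the general fact that a Lagrangian $W\subset U$ satisfies $U^{\perp_\omega}\subset W^{\perp_\omega}=W$, hence $X^3\in W$. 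Your computation of $U^{\perp_\omega}$ via \eqref{eq:omega} checks out (the vanishing of the $Y^3$, $XY^2$ and $X^2Y$ coefficients follows from pairing with $X^3$, $X^2Y$, $XY^2$ respectively). This conceptual argument is shorter and avoids the case analysis over whether $c=0$; the paper's version is more elementary but has to juggle the ratio being possibly infinite. Both are valid, and the normalization $[a:b]=[0:1]$ by $\SL(2,\C)$-equivariance is the same device in both.
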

\begin{proof}
Let $W \in K_{\C}$. We know that it contains an element with a triple root. By acting with $SL(2,\C)$, we can assume that the triple root is zero, in other words that $X^3 \in W$. Let $p\in W$. We can write $p = a X^3 + bX^2Y + c XY^2 + d Y^3$. Since $W$ is isotropic, we know that $\omega_{2,\C}(X^3,p) = 0$, hence, by (\ref{eq:omega}), we see that $d=0$. Hence, zero is a common root of every element of $W$. 

Conversely, assume that all elements of $W$ have a common root. By acting with $SL(2,\C)$, we can assume that the common root is zero, hence all elements of $W$ are of the form  $p = a X^3 + bX^2Y + c XY^2$. By (\ref{eq:omega}), 
\[ \omega_{2,\C}(a_1 X^3 + b_1 X^2Y + c_1 XY^2, a_2 X^3 + b_2 X^2Y + c_2 XY^2) = \frac{1}{3}(c_1 b_2 - b_1 c_2)\,. \]
Hence, all polynomials in $W$ have the same ratio $\frac{b}{c}$ (which can be infinite if $c=0$). Given a basis $p_1, p_2$ of $W$, we can multiply one of them by a scalar to make sure they have the same coefficients $b,c$. Then $p_1 - p_2$ is a multiple of $X^3$, hence $W$ has an element with a triple root. 
\end{proof}

We can now prove the following:
\begin{Theorem}
$K_{\C}$ is in bijection with $\C\mathbb{P}^1 \times \C\mathbb{P}^1$.	
\end{Theorem}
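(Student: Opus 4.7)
The plan is to project $K_{\C}$ to $\C\mathbb{P}^1$ via the common-root map, show each fiber is itself a $\C\mathbb{P}^1$, and then construct an explicit global trivialization.

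First I would upgrade the preceding lemma by showing that the common root $[a:b]$ of $W\in K_{\C}$ is \emph{unique}. If $W$ had two distinct common roots $[a:b]\neq [c:d]$, then every polynomial in $W$ would be divisible by $(bX-aY)(dX-cY)$, forcing $W$ to coincide with the $2$-dimensional subspace $(bX-aY)(dX-cY)\cdot V_{1,\C}$. By $\SL(2,\C)$-equivariance we may take $[a:b]=[0:1]$ and $[c:d]=[1:0]$, giving $W=\langle X^2Y,\,XY^2\rangle$; but (\ref{eq:omega}) yields $\omega(X^2Y,XY^2)=-\tfrac{1}{3}\neq 0$, contradicting isotropy. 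Hence the common-root map $\pi\co K_{\C}\to \C\mathbb{P}^1$ is well-defined.

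Next I would compute the fiber $\pi^{-1}([a:b])$, consisting of Lagrangians contained in the $3$-dimensional subspace $V_{[a:b]}:=(bX-aY)\cdot\mathrm{Sym}^2 V_{1,\C}$. Again reducing to $[a:b]=[0:1]$ and using (\ref{eq:omega}), the restriction of $\omega$ to $V_{[a:b]}$ has $1$-dimensional radical $\langle(bX-aY)^3\rangle$ and induces a non-degenerate symplectic form on the $2$-dimensional quotient $V_{[a:b]}/\langle(bX-aY)^3\rangle$. Since a $2$-dimensional symplectic space contains no $2$-dimensional isotropic subspace, every $W\in\pi^{-1}([a:b])$ must contain the radical, and is therefore determined by a line in that $2$-dimensional quotient; thus $\pi^{-1}([a:b])\cong \C\mathbb{P}^1$.

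To trivialize the bundle globally, I would identify $V_{[a:b]}/\langle(bX-aY)^3\rangle$ with $\mathrm{Sym}^2 V_{1,\C}/\langle(bX-aY)^2\rangle$ by dividing by $(bX-aY)$, so that points in the fiber correspond to projective lines through $[(bX-aY)^2]$ in $\mathbb{P}(\mathrm{Sym}^2 V_{1,\C})\cong\C\mathbb{P}^2$. The Veronese conic $\mathcal{C}:=\{[(dX-cY)^2] : [c:d]\in\C\mathbb{P}^1\}$ passes through $[(bX-aY)^2]$, and any line through this point meets $\mathcal{C}$ in exactly one further (possibly tangential) point, yielding a natural parametrization by $[c:d]\in\C\mathbb{P}^1$. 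This suggests the explicit map
\[
\Psi([a:b],[c:d]) := \begin{cases} \langle (bX-aY)^3,\ (bX-aY)(dX-cY)^2\rangle & \text{if } [c:d]\neq [a:b], \\ \xi^2_\C([a:b]) & \text{if } [c:d]=[a:b]. \end{cases}
\]
Bijectivity would then be verified fiber-by-fiber: surjectivity from the line-conic intersection argument, and injectivity from the fact that three distinct points on a smooth conic cannot be collinear. The main obstacle will be to handle the diagonal $[c:d]=[a:b]$ uniformly: there the first formula degenerates, and one must recognize that the tangent line to $\mathcal{C}$ at $[(bX-aY)^2]$ is the projectivization of $(bX-aY)\cdot V_{1,\C}$, which is precisely the $2$-plane defining $\xi^2_\C([a:b])$.
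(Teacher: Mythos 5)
Your proof is correct, and it arrives at exactly the same bijection that the paper writes down (your $\Psi$ is literally the map $g$ in the paper), but it gets there by a genuinely more structural route. The paper simply exhibits the formula for $g$ and declares that well-definedness and bijectivity are ``easy calculations,'' whereas you first establish the uniqueness of the common root (via isotropy and $\omega(X^2Y,XY^2)\neq 0$), which gives a well-defined projection $\pi\co K_\C\to\C\mathbb{P}^1$; then identify each fiber $\pi^{-1}([a:b])$ with the projective line of lines in the $2$-dimensional symplectic quotient $V_{[a:b]}/\langle(bX-aY)^3\rangle$; and finally trivialize the resulting $\C\mathbb{P}^1$-bundle by identifying that quotient with lines through $[(bX-aY)^2]$ in $\mathbb{P}(\mathrm{Sym}^2 V_{1,\C})$ and intersecting with the Veronese conic. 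This buys you a conceptual reason why the bijection exists and why it has the exact formula it does (including the degenerate diagonal case, recovered as the tangent line to the conic), and it packages the injectivity/surjectivity checks into standard facts about conics (a line meets a smooth conic in two points with multiplicity; no three points of a smooth conic are collinear), avoiding the coordinate computations the paper leaves implicit. The paper's approach is shorter to state but less explanatory; yours also makes visible the $\SL(2,\C)$-equivariant fibration structure of $K_\C$ over $\C\mathbb{P}^1$, which is used implicitly in the paper's subsequent identification of the $\SL(2,\C)$-orbits. One small remark: you should note, as you implicitly do, that any $2$-plane in $V_{[a:b]}$ containing the radical $(bX-aY)^3$ is automatically isotropic, so that \emph{every} line in the quotient yields a Lagrangian; this completes the fiber computation.
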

\begin{proof}
We construct an explicit bijection 
\[g\co\C\mathbb{P}^1 \times \C\mathbb{P}^1 \stackrel{\cong}{\longrightarrow} K_{\C}\] 
defined by:
\[g(([a:b], [c:d])) = \left\{
	\begin{array}{ll}
		 \xi^2_\C([a:b])  & \mbox{if } [a:b] = [c:d]\\
		\langle (bX-aY)^3, (dX-cY)^2 (bX-aY) \rangle & \mbox{if } [a:b] \neq [c:d]
	\end{array}
\right.\]
The facts that the map $g$ is well-defined and bijective are easy calculations. 
\end{proof}

\begin{Remark}[The space $K_{\R}$]\label{K_R}
The space 
\[K_\R = \bigcup_{t \in \R\mathbb{P}^1} K_{\xi^1_\R(t)} \subset \Lag(\C^{4}) \]
is precisely the space $K_{\rho,I}$ in Section \ref{sub:dod}, hence $\Omega = \Lag(\C^{4}) \setminus K_\R$. Note that 
\[K_{\R} = \{W \in \mathrm{Lag}(\C^{4}) \mid \exists\; p= ( bX- a Y)^3 \in W, [a:b]\in\R\mathbb{P}^1\}\] 
corresponds to the set of Lagrangian subspaces with a triple real root $[a:b] \in \R\mathbb{P}^1$. We thus see that 
\[K_{\R}:= \cup_{t \in \R\mathbb{P}^1} K_{\xi^1_\R(t)}\cong \R\mathbb{P}^1 \times \C\mathbb{P}^1 \cong \R\mathbb{P}^1 \times \mathrm{Lag}(\C^2)\,,\] 
or more precisely $K_{\R} = g(\R\mathbb{P}^1 \times \mathrm{Lag}(\C^2))$.
\end{Remark}

\begin{Remark}[Generalisation to $\mathrm{Lag}(\C^{2n})$]
  The second factor $\C\mathbb{P}^1$ in the maps above should be interpreted as $\mathrm{Lag}(\C^2)$. In fact, in more generality, we can prove that, for any dimension, $K_{\C} \cong \C\mathbb{P}^1 \times \mathrm{Lag}(\C^{2(n-1)})$ and $K_{\R} \cong \R\mathbb{P}^1 \times \mathrm{Lag}(\C^{2(n-1)}).$
\end{Remark}
 
\begin{Lemma}   \label{lem:double_root}
Every Lagrangian $W$ contains a polynomial 
\[p(X,Y) = (b X - a Y)^2 (d X-c Y)\]  
with a double root $[a:b] \in \C\mathbb{P}^1$ and a single root $[c:d]\neq [a:b]\in \C\mathbb{P}^1$. 
\end{Lemma}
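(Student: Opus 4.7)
The plan is to split on whether $W$ lies in the locus $K_\C$ classified in the previous theorem. When $W \in K_\C$, the explicit parametrization $g\co\C\mathbb{P}^1 \times \C\mathbb{P}^1 \to K_\C$ gives $W = g([a:b],[c:d])$, and one can read the required polynomial directly off the generators. If $[a:b] \neq [c:d]$, the second generator $(dX - cY)^2(bX - aY)$ already has the desired shape, with double root at $[c:d]$ and simple root at $[a:b]$. If $[a:b] = [c:d]$, then $W = \xi^2_\C([a:b])$ is the two-dimensional space of cubics divisible by $(bX - aY)^2$, so for any $[c':d'] \neq [a:b]$ the polynomial $(bX - aY)^2(d'X - c'Y)$ lies in $W$ and has the required double-plus-simple structure.

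For $W \notin K_\C$, no polynomial in $W$ has a triple root, so it suffices to exhibit any element of $W$ with a repeated root. For this I would invoke the classical discriminant of a binary cubic: writing $p = \alpha X^3 + \beta X^2 Y + \gamma X Y^2 + \delta Y^3$, the discriminant
\[
\Delta(\alpha,\beta,\gamma,\delta) = \beta^2\gamma^2 - 4\alpha\gamma^3 - 4\beta^3\delta + 18\alpha\beta\gamma\delta - 27\alpha^2\delta^2
\]
is a homogeneous polynomial of degree four in the coefficients whose zero locus $D \subset \mathbb{P}(V_\C) \cong \C\mathbb{P}^3$ is precisely the closed subvariety of cubics having a repeated root. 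The projective line $\mathbb{P}(W) \subset \C\mathbb{P}^3$ must meet the quartic surface $D$: either $\mathbb{P}(W) \subset D$, in which case every element of $W$ has a repeated root, or by Bezout's theorem $\mathbb{P}(W) \cap D$ consists of four points counted with multiplicity, in particular at least one set-theoretic point. Either way $W$ contains some $p$ with a repeated root, and since $W \notin K_\C$ this cannot be triple; hence $p = (bX - aY)^2(dX - cY)$ with $[a:b] \neq [c:d]$, completing the argument.

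No substantial obstacle is expected: the proof is a direct combination of the explicit description of $K_\C$ in the preceding theorem with a Bezout count for Lagrangians in its complement, and uses no Lagrangian structure in the second case beyond the fact that $\mathbb{P}(W)$ is a line in $\C\mathbb{P}^3$. The only mildly subtle point is the coincident-root subcase of Case 1, where one must note that the two-dimensional space $\xi^2_\C([a:b])$ is not exhausted by multiples of $(bX - aY)^3$; this is immediate by dimension count.
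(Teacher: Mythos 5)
Your proof is correct, and the core idea for $W \notin K_\C$ is the same as the paper's: the discriminant of a binary cubic cuts out a hypersurface in $\CP^3$ whose restriction to the line $\mathbb{P}(W)$ must vanish somewhere, producing an element with a repeated (and necessarily not triple) root. The packaging, however, is genuinely different and arguably cleaner. The paper first normalizes by the $\SL(2,\C)$-action so that $XY(X+Y) \in W$, uses the Lagrangian condition \eqref{eq:omega} to pin down $W = \langle XY(X+Y),\, aX^3+dY^3 \rangle$, writes the general element as $q = aX^3 + \beta X^2 Y + \beta X Y^2 + d Y^3$, computes $\Delta(q) = \beta^4 - 4(a+d)\beta^3 + 18ad\beta^2 - 27a^2d^2$, and invokes the fundamental theorem of algebra to find a root in $\beta$. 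Your Bezout formulation skips the normalization and the explicit computation entirely, and — as you correctly point out — does not use the Lagrangian structure at all in this case, only that $\mathbb{P}(W)$ is a projective line and $D = \{\Delta = 0\}$ a quartic surface. This makes the structure of the argument more transparent and shows it holds for an arbitrary $2$-plane, not just a Lagrangian one. Your treatment of $W \in K_\C$ also reads the desired polynomial directly off the parametrization $g$ rather than deferring to the earlier discussion; both are easy and correct. One small caveat worth making explicit, which you implicitly handle: in the branch $\mathbb{P}(W) \subset D$, every nonzero $p \in W$ has a repeated root, and since $W \notin K_\C$ none of those can be triple, so the conclusion still holds.
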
 
\begin{proof}
If $W$ has a polynomial with a triple root, then $W\in K_\C$, and we saw above that these Lagrangians also contain a polynomial with a double root and a single root. If $W$ does not contain a polynomial with a triple root but has a polynomial with a double root, the third root must be distinct, hence we are done.
 
Assume now that $W$ has a polynomial with three distinct roots. Acting with $SL(2,\C)$, we can assume that the three roots are $[-1:1], [0:1], [1:0]$, hence that $XY(X+Y)=X^2Y + XY^2 \in W$. Let $p\in W$, $p = a X^3 + bX^2Y + c XY^2 + d Y^3$. By (\ref{eq:omega}), we have
\[ \omega_{2,\C}(XY(X+Y), p) = \frac{1}{3}(b-c)\,, \]
which implies that $b=c$.  Hence $W = \left<XY(X+Y), a X^3 + d Y^3\right>$, and $W$ is determined by $[a:d]$. The other elements of $W$ are of the form 
\[q = aX^3 + \beta X^2 Y + \beta X Y^2 + d Y^3\,.\]
We want to find elements with a double root, we can find them using the discriminant. The {\em discriminant} $\Delta$ of a degree--$3$ polynomial $\alpha x^3+\beta x^2y +\gamma xy^2 +\delta y^3$ is defined by 
\[\Delta(\alpha x^3+\beta x^2y +\gamma xy^2 +\delta y^3) := \beta^2\gamma^2-4\alpha\gamma^3-4\delta\beta^3-27\alpha^2\delta^2+18\alpha\beta\gamma\delta\,,\] 
and polynomials with a double root correspond to the zeros of the discriminant. In our case, we have 
\[\Delta(q) = \beta^4 - 4(a+d)\beta^3 + 18ad \beta^2 - 27 a^2d^2\,.  \]
A polynomial has always at least one solution over the complex numbers, hence for every value of $a$ and $d$, we can find a $\beta$ such that $q$ has a double root.
\end{proof}

We can now state the main result for this section, which identifies the unique open $\SL(2, \C)$-orbit in $\mathrm{Lag}(\C^{4})$ with the space of regular ideal tetrahedra in $\mathbb{H}^3$. This will be a key step in the proof of Theorem \ref{fiber_smooth}. 
\begin{Theorem}
There are three $\SL(2, \C)$--orbits in $\mathrm{Lag}(\C^{4})$:
\begin{itemize}
  \item $\xi^2_\C(\C\mathbb{P}^1) = \SL(2, \C) \cdot \langle X^3, X^2 Y \rangle$ is the only closed orbit, and it is in bijection with the diagonal $\Delta \subset K_\C \cong \C\mathbb{P}^1 \times \C\mathbb{P}^1$. 
  \item $K_{\C} \setminus \xi^2_\C(\C\mathbb{P}^1) =  \SL(2, \C) \cdot \langle X^3, X Y^2 \rangle$ is not open nor close, and it is in bijection with $\C\mathbb{P}^1 \times \C\mathbb{P}^1 \setminus \Delta$.
  \item $\mathrm{Lag}(\C^{4}) \setminus K_{\C} =  \SL(2, \C) \cdot \langle X^2Y, X^3 + Y^3\rangle$ is the only open orbit and it is in bijection with the space $\mathfrak{T}_{\HH^3}$ of regular ideal hyperbolic tetrahedra in $\mathbb{H}^3$.
\end{itemize}
\end{Theorem}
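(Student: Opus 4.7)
The plan is to treat the three candidate orbits separately, using the stratification
\[\mathrm{Lag}(\C^4) = \xi^2_\C(\C\mathbb{P}^1)\ \sqcup\ \left(K_\C \setminus \xi^2_\C(\C\mathbb{P}^1)\right)\ \sqcup\ \left(\mathrm{Lag}(\C^4) \setminus K_\C\right),\]
and then to identify the open stratum geometrically with the space of regular ideal tetrahedra.

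For the two strata inside $K_\C$, the key observation is that the bijection $g\co \C\mathbb{P}^1 \times \C\mathbb{P}^1 \to K_\C$ is $\SL(2,\C)$-equivariant with respect to the diagonal action on the product, since the principal representation simply permutes roots of the linear factors appearing in $g$. The diagonal action of $\SL(2,\C)$ on $\C\mathbb{P}^1 \times \C\mathbb{P}^1$ has precisely two orbits: the diagonal $\Delta$ and its complement. Under $g$ these correspond to $\xi^2_\C(\C\mathbb{P}^1)$ and to $K_\C \setminus \xi^2_\C(\C\mathbb{P}^1)$ respectively. Direct substitution gives $g([0:1],[0:1]) = \langle X^3, X^2Y\rangle$ and $g([0:1],[1:0]) = \langle X^3, XY^2\rangle$, identifying the orbit representatives; the topological descriptions (closed versus neither-open-nor-closed) follow from those of $\Delta$ in $\C\mathbb{P}^1 \times \C\mathbb{P}^1$.

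For the open stratum, given $W \in \mathrm{Lag}(\C^4) \setminus K_\C$, Lemma~\ref{lem:double_root} provides a polynomial $p = (bX-aY)^2(dX-cY) \in W$. Since $W$ contains no triple-root polynomial, $[a:b] \neq [c:d]$, so after acting by a suitable element of $\SL(2,\C)$ we may assume $X^2 Y \in W$. A second basis vector $q = \alpha X^3 + \beta X^2Y + \gamma XY^2 + \delta Y^3$ must satisfy $\omega_\C(X^2Y, q) = 0$, which by \eqref{eq:omega} forces $\gamma = 0$; subtracting a multiple of $X^2Y$ reduces $q$ to $\alpha X^3 + \delta Y^3$. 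The hypothesis $W \notin K_\C$ rules out $\alpha = 0$ and $\delta = 0$ (otherwise $X^3$ or $Y^3$ would lie in $W$, yielding a triple root). Finally, the diagonal subgroup $\mathrm{diag}(t,t^{-1}) \subset \SL(2,\C)$ preserves $\langle X^2Y\rangle$ and scales the ratio $\delta/\alpha$ by $t^{-6}$, so that one can normalize $W$ to the standard model $\langle X^2Y, X^3+Y^3\rangle$.

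To identify this open orbit with the space $\mathfrak{T}_{\HH^3}$ of regular ideal tetrahedra, I would define an $\SL(2,\C)$-equivariant map sending each $W$ to its set of \emph{double-root points}, i.e.\ points $[a:b] \in \C\mathbb{P}^1 = \partial\HH^3$ that appear as a double root of some nonzero element of $W$. On the normalized model $\langle X^2Y, X^3+Y^3\rangle$, a direct discriminant computation for $\mu(X^3+Y^3) + \lambda X^2Y$ yields exactly the four points $\infty,\ 2^{1/3},\ 2^{1/3}\zeta,\ 2^{1/3}\zeta^2$ with $\zeta = e^{2\pi i/3}$, and a cross-ratio computation (giving $e^{i\pi/3}$) confirms that these are the vertices of a regular ideal tetrahedron in $\HH^3$. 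By equivariance, bijectivity reduces to matching stabilizers: the $\SL(2,\C)$-stabilizer of the standard $W$ must coincide with the preimage in $\SL(2,\C)$ of the rotation group of the tetrahedron, the binary tetrahedral group. I expect this stabilizer comparison, together with the verification that the double-root assignment is injective on the open orbit, to be the most delicate step of the argument.
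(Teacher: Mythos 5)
Your approach matches the paper's very closely: the paper also reads off the orbits inside $K_\C$ from the bijection $g$ (phrased as ``Lagrangians with a common double root'' vs.\ ``Lagrangians with a common single root'' rather than as the two orbits of the diagonal action, but that is the same observation), normalizes a Lagrangian outside $K_\C$ to $\langle X^2Y, X^3+Y^3\rangle$ in exactly the way you describe, and then uses the discriminant $\Delta(\alpha X^3 + \beta X^2 Y + \alpha Y^3) = -\alpha(4\beta^3 + 27\alpha^3)$ to find the four double-root polynomials and verify that their double roots form a regular ideal tetrahedron via a cross-ratio computation.

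Two remarks. First, a small computational slip: in the paper's convention ($[a:b]$ is the root of $bX - aY$, identified with $a/b \in \C\cup\{\infty\}$), the basis element $X^2Y$ has \emph{double} root $0$ and \emph{single} root $\infty$, so the four double-root points of $\langle X^2Y, X^3+Y^3\rangle$ are $\{0,\, 2^{1/3},\, 2^{1/3}\zeta,\, 2^{1/3}\zeta^2\}$, not $\{\infty, 2^{1/3}, 2^{1/3}\zeta, 2^{1/3}\zeta^2\}$ (the set you list actually is not even the set of single roots, which is $\{\infty, -2^{-2/3}, -2^{-2/3}\zeta, -2^{-2/3}\zeta^2\}$). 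This does not affect the conclusion --- both are regular tetrahedra and the cross-ratio $e^{i\pi/3}$ is equivalent to the paper's $\tfrac{1-\sqrt3 i}{2} = e^{-i\pi/3}$ up to relabelling --- but you should redo the arithmetic. Second, you are right to flag the stabilizer comparison as the delicate step: surjectivity onto $\mathfrak{T}_{\HH^3}$ follows from equivariance and transitivity of $\PSL(2,\C)$ on regular ideal tetrahedra, but injectivity requires that $\mathrm{Stab}_{\PSL(2,\C)}(\langle X^2Y, X^3+Y^3\rangle)$ be all of the rotation group $A_4$ of the tetrahedron, and neither you nor the paper actually verifies this in the proof of the present theorem. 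The paper effectively bypasses this by constructing an explicit continuous inverse $g\co \mathfrak{T}_{\overline{\HH^3}} \to \mathrm{Lag}(\C^4)$ in the next theorem (Theorem~\ref{thm:lagrange}), from which bijectivity follows. If you want a self-contained argument here, the cleanest route is a short direct check that the order-3 rotations $z\mapsto \zeta z$ and the three order-2 rotations preserving the model Lagrangian all lie in $\mathrm{Stab}(W_0)$, forcing $\mathrm{Stab}(W_0) = A_4$; leaving it as ``I expect this to be the most delicate step'' is an honest diagnosis but does leave a gap.
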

 
Recall that an ideal hyperbolic tetrahedron is called {\em regular} when all the dihedral angles are equal (and equal to $\frac{\pi}{3}$). These tetrahedra can also be characterized by their volume or their cross-ratio, since a tetrahedron is regular if and only if it has maximal volume, if and only if the cross-ratio of its vertices is $\frac{1-\sqrt{3}i}{2}.$ Recall that given $4$ points $z_1, z_2, z_3, z_4$ in $\C\mathbb{P}^1$ we define their {\em cross-ratio} as 
\[[z_1, z_2, z_3, z_4] = \frac{(z_3 - z_1) (z_4 - z_2)}{(z_3 - z_2)(z_4 - z_1)}\,.\] 
Equivalently, $[z_1, z_2, z_3, z_4] = A z_4$, where $A \in \PSL(2,\C)$ is defined by $A z_1 = \infty,  Az_2 = 0$, and $A z_3 = 1$. Note that if you change the order of the points, then the cross-ratio $z$ can become $1-\frac{1}{z}$ or $\frac{1}{1-z}$. On the other hand, if $z_0 = \frac{1-\sqrt{3}i}{2}$, then $z_0 = 1-\frac{1}{z_0} = \frac{1}{1-z_0}$, so the characterization of regular tetrahedra doesn't depend on the chosen order of the vertices when calculating the cross-ratio, as it should be. We will discuss more properties of regular ideal hyperbolic tetrahedra in Section \ref{tetrahedra}.


\begin{proof}
	We have already discussed above the bijection $g\co\C\mathbb{P}^1 \times \C\mathbb{P}^1 \stackrel{\cong}{\longrightarrow} K_{\C}$. From the discussion above, you can see that:
	\begin{itemize}
		\item $\xi^2_\C(\C\mathbb{P}^1) = \SL(2, \C) \cdot \langle X^3, X^2 Y \rangle$ corresponds to Lagrangians all of whose polynomials share a common double root;
		\item $K_{\C} \setminus \xi^2_\C(\C\mathbb{P}^1) =  \SL(2, \C) \cdot \langle X^3, X Y^2 \rangle$ corresponds to Lagrangians all of whose polynomials share a common single root.
	\end{itemize}
Since $\xi^2_\C$ is an embedding, $\xi^2_\C(\C\mathbb{P}^1) \cong \C\mathbb{P}^1$ is closed.

To complete the proof, we need to show that $\mathrm{Lag}(\C^{4}) \setminus K_{\C}$ is in bijection with $\mathfrak{T}_{\HH^3}$, and it is one $\SL(2, \C)$ orbit. Given $W \not\in K_{\C}$, by Lemma \ref{lem:double_root}, $W$ contains a polynomial 
with a double root and a single root. Acting with $SL(2,\C)$, we can assume the roots are $[0:1]$ and $[1:0]$, i.e. that $X^2 Y \in W$. Let $p\in W$, $p = a X^3 + bX^2Y + c XY^2 + d Y^3$. By (\ref{eq:omega}) we have
\[ \omega_{2,\C}(X^2Y, p) = -\frac{1}{3}c\,, \]
which implies that $c=0$.  Hence $W = \left<X^2Y, a X^3 + d Y^3\right>$. Acting with $SL(2,\C)$, we can fix $[0:1]$ and $[1:0]$ and send $[a:d]$ to $[1:1]$. Then we have $W= \left<X^2Y, X^3 + Y^3\right>$, and this proves that 
\[\mathrm{Lag}(\C^{4}) \setminus K_{\C} =  \SL(2, \C) \cdot \langle X^2Y, X^3 + Y^3\rangle \,.\] 


Lastly, in order to see that this open orbit is in bijection with $\mathfrak{T}_{\HH^3}$, we study the Lagrangian subspace $W$ and see it contains exactly $4$ `special' polynomials which have a double root. To find them, we use the discriminant as in the proof of Lemma~\ref{lem:double_root}. The elements of $W$ are of the form 
\[q = \alpha X^3 + \beta X^2 Y + \alpha Y^3\,,\]
hence the discriminant is 
\[\Delta(q) = -\alpha (4\beta^3+27\alpha^3)\,.\] 
We have that $\Delta(q)= 0$ if and only if 
\begin{enumerate}
  \item $[\alpha:\beta]=[0:1]$;
  \item $[\alpha:\beta] = \left[-\frac{\sqrt[3]{4}}{3}:1\right]$;
  \item $[\alpha:\beta] = \left[\frac{1}{3\sqrt[3]{2}}- i \frac{1}{\sqrt[3]{2}\sqrt{3}}:1\right]$;
  \item $[\alpha:\beta] = \left[\frac{1}{3\sqrt[3]{2}}+ i \frac{1}{\sqrt[3]{2}\sqrt{3}}:1\right]$.
\end{enumerate}
The associated polynomials have double and single roots, respectively, given by:
\begin{enumerate}
  \item $0$ and $\infty$;
  \item $\sqrt[3]{2}$ and $-\frac{1}{\sqrt[3]{4}}$;
  \item $\frac{-1-i\sqrt{3}}{\sqrt[3]{4}}$ and $\frac{1+i\sqrt{3}}{2\sqrt[3]{4}}$;
  \item $\frac{-1+i\sqrt{3}}{\sqrt[3]{4}}$ and $\frac{1-i\sqrt{3}}{2\sqrt[3]{4}}$.
\end{enumerate}

The vertices corresponding to the $4$ double roots define an ideal hyperbolic tetrahedron $T = \left\{ 0,\sqrt[3]{2},\frac{-1-i\sqrt{3}}{\sqrt[3]{4}}, \frac{-1+i\sqrt{3}}{\sqrt[3]{4}}\right\}$ and the vertices corresponding to the $4$ single roots define a `dual ' ideal hyperbolic tetrahedra $T_{dual} = \left\{\infty,-\frac{1}{\sqrt[3]{4}}, \frac{1+i\sqrt{3}}{2\sqrt[3]{4}}, \frac{1-i\sqrt{3}}{2\sqrt[3]{4}}  \right\}$. The  tetrahedron $T_{dual}$ is the image of $T$ by the central symmetry centered at the barycenter of $T$. A simple calculation shows that the cross-ratio of the vertices of $T$ and $T_{dual}$ are equal to $\frac{1-\sqrt{3}i}{2}$:
\[\left[ 0, \sqrt[3]{2}, \frac{-1-i\sqrt{3}}{\sqrt[3]{4}}, \frac{-1+i\sqrt{3}}{\sqrt[3]{4}} \right] = \left[\infty, -\frac{1}{\sqrt[3]{4}}, \frac{1+i\sqrt{3}}{2\sqrt[3]{4}}, \frac{1-i\sqrt{3}}{2\sqrt[3]{4}} \right] = \frac{1-\sqrt{3}i}{2}\,.\]
Hence $T$ and $T_{dual}$ are regular ideal hyperbolic tetrahedra, as we wanted to prove.
\end{proof}

Now we want to see that the bijections described above are actually homeomorphisms. Let us first clarify or recall the topology of these two spaces. The topology on $\mathfrak{T}_{\HH^3} \cup \C\mathbb{P}^1\times \C\mathbb{P}^1$ is defined by the topology of $\C\mathbb{P}^1$ and the fact that both $\mathfrak{T}_{\HH^3}$ and $\C\mathbb{P}^1\times \C\mathbb{P}^1$ are subspaces of the symmetrization $\mathrm{Sym}^4\left(\C\mathbb{P}^1\right)$. Remember also that the topology of $\mathrm{Lag}(\C^{4})$ can be described by considering $\mathrm{Lag}(\C^{4}) \subset \mathrm{Gr}(2, \C^{4})$ and using the Pl\"ucker coordinates for the topology of $ \mathrm{Gr}(2, \C^{4})$.

\begin{Theorem}\label{thm:lagrange}
The space $\mathrm{Lag}(\C^{4})$ is homeomorphic to the space 
\[\mathfrak{T}_{\overline{\HH^3}} := \mathfrak{T}_{\HH^3} \cup \C\mathbb{P}^1\times \C\mathbb{P}^1\,.\]
\end{Theorem}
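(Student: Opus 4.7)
The plan is to promote the set-theoretic bijection between $\Lag(\C^{4})$ and $\mathfrak{T}_{\overline{\HH^3}}$ produced by the previous theorem to an algebraic morphism $\Phi\co\Lag(\C^{4}) \to \mathrm{Sym}^{4}(\C\mathbb{P}^1)$, and then conclude via a compactness argument. For a Lagrangian $W \in \Lag(\C^{4})$ with basis $f_1, f_2 \in V_\C$ (cubic forms), I would set
$$R(W) \;=\; \partial_X f_1 \cdot \partial_Y f_2 - \partial_Y f_1 \cdot \partial_X f_2 \;\in\; \mathrm{Sym}^4 V_{1,\C}\,.$$
Since this expression is bilinear and alternating in $(f_1,f_2)$, the class $[R(W)] \in \PP(\mathrm{Sym}^4 V_{1,\C}) \cong \mathrm{Sym}^{4}(\C\mathbb{P}^1)$ is independent of the chosen basis, provided $R(W) \not\equiv 0$. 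Nonvanishing is immediate: if the Jacobian $R$ vanished identically then $f_1,f_2$ would be functionally dependent as polynomial functions on $\C^2$, forcing them to be proportional, which contradicts $\dim W = 2$. The assignment $W \mapsto [R(W)]$ is the composition of the Pl\"ucker embedding $\Lag(\C^{4}) \hookrightarrow \PP(\Lambda^2 V_\C)$ with the projectivization of a fixed linear map $\Lambda^2 V_\C \to \mathrm{Sym}^4 V_{1,\C}$, and hence defines a continuous algebraic morphism $\Phi$.

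The second step is to identify $\Phi$ with the bijection of the previous theorem on each of the three $\SL(2,\C)$-orbits. The Jacobian $R$ is a classical $\SL(2,\C)$-covariant, so by equivariance it suffices to treat one representative per orbit: a direct computation gives $R(\langle X^3, X^2 Y\rangle) = 3X^4$, $R(\langle X^3, XY^2\rangle) = 6 X^3 Y$, and $R(\langle X^2 Y,\, X^3 + Y^3\rangle) = -3 X(X^3 - 2Y^3)$. These correspond respectively to the degree-$4$ divisors $4[0:1]$, $3[0:1] + [1:0]$, and the four simple zeros at the vertices of the regular ideal tetrahedron exhibited at the end of the previous section. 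Translating by $\SL(2,\C)$, this shows that $\Phi$ sends the closed orbit $\xi^{2}_\C(\C\mathbb{P}^1)$ onto the ``quadruple-point'' diagonal $\{4p : p \in \C\mathbb{P}^1\}$, sends $K_\C \setminus \xi^{2}_\C(\C\mathbb{P}^1)$ onto $\{3p + q : p \neq q\}$ (the off-diagonal image of the natural embedding $\C\mathbb{P}^1 \times \C\mathbb{P}^1 \hookrightarrow \mathrm{Sym}^{4}(\C\mathbb{P}^1)$, $(p,q) \mapsto 3p + q$), and sends the open orbit bijectively onto $\mathfrak{T}_{\HH^3}$. Hence $\Phi$ is a continuous bijection from $\Lag(\C^{4})$ onto $\mathfrak{T}_{\overline{\HH^3}}$.

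To conclude, $\Lag(\C^{4})$ is compact (as a closed subvariety of $\mathrm{Gr}(2,\C^4)$) and $\mathrm{Sym}^{4}(\C\mathbb{P}^1) \cong \C\mathbb{P}^4$ is Hausdorff, so the continuous bijection $\Phi$ is automatically a homeomorphism onto its image, which is exactly $\mathfrak{T}_{\overline{\HH^3}}$; this proves the theorem. The only genuine technical point is verifying that the local vanishing order of $R(W)$ at each root matches the ``pencil multiplicity'' used in the two embeddings $\C\mathbb{P}^1 \times \C\mathbb{P}^1, \mathfrak{T}_{\HH^3} \hookrightarrow \mathrm{Sym}^{4}(\C\mathbb{P}^1)$ that topologize $\mathfrak{T}_{\overline{\HH^3}}$; the three explicit orbit computations provide this compatibility at one representative per orbit, and $\SL(2,\C)$-equivariance propagates it to the whole space.
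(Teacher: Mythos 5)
Your proof is correct, and it takes a genuinely different and cleaner route than the paper's. The paper constructs the forward map $g\colon \mathfrak{T}_{\overline{\HH^3}} \to \Lag(\C^4)$ and verifies its continuity by writing out Pl\"ucker coordinates for each of the three strata and analyzing, by hand, all degenerating sequences of tetrahedra (including a somewhat delicate case where one must ``analyze the rate of convergence''). You instead build the \emph{inverse} map $\Phi = [R]$ out of the classical Jacobian covariant of a pencil of binary cubics. This has two decisive advantages: continuity is automatic (you correctly observe that $W\mapsto R(W)$ is the projectivization of a fixed linear map $\Lambda^2 V_\C \to \mathrm{Sym}^4 V_{1,\C}$ precomposed with the Pl\"ucker embedding, so it is a morphism of varieties on the locus where $R\neq 0$), and the $\SL(2,\C)$-covariance of $R$ reduces the match with the set-theoretic bijection of the previous theorem to a three-line check on orbit representatives. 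Both arguments ultimately invoke the same ``continuous bijection compact $\to$ Hausdorff is a homeomorphism'' fact.

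I verified the technical points: the nonvanishing of $R$ (a pencil with vanishing Jacobian would consist of algebraically dependent cubics, forcing proportionality in the homogeneous equal-degree case, contradicting $\dim W = 2$); your three representative computations $R(\langle X^3,X^2Y\rangle)=3X^4$, $R(\langle X^3,XY^2\rangle)=6X^3Y$, and $R(\langle X^2Y,X^3+Y^3\rangle)=-3X(X^3-2Y^3)$ are all correct, and the zeros of the last one are precisely the vertices $\{0,\sqrt[3]{2},\frac{-1\pm i\sqrt{3}}{\sqrt[3]{4}}\}$ from the paper's analysis; the $\SL(2,\C)$-equivariance of $R$ holds with no extra determinant weight since $\det = 1$; and the embedding $(p,q)\mapsto 3p+q$ of $\C\mathbb{P}^1\times\C\mathbb{P}^1$ into $\mathrm{Sym}^4(\C\mathbb{P}^1)$ is the one the paper implicitly uses (three vertices and the degenerate barycenter collapse to the first coordinate). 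The one thing the paper gets ``for free'' by going in the forward direction is an explicit parametrization in Pl\"ucker coordinates that it reuses later, whereas your argument identifies the homeomorphism type more transparently. Either way, the proof is complete.
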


\begin{Remark}
With this theorem in mind, we will call elements of $\C\mathbb{P}^1\times \C\mathbb{P}^1$ \textit{degenerate tetrahedra}, and the first coordinate in $\C\mathbb{P}^1\times \C\mathbb{P}^1$ their \textit{degenerate barycenter}.  
\end{Remark}


\begin{proof}
To prove this result, we will extend the map $g$, defined above, to a map 
\[g\co \mathfrak{T}_{\HH^3} \cup \C\mathbb{P}^1\times \C\mathbb{P}^1 \to \mathrm{Lag}(\C^{4})\,,\] 
and check that the map is a homeomorphism. 

Given a tetrahedron $T = \{v_1, \ldots, v_4\} \in \mathfrak{T}_{\HH^3}$, we define the \textit{dual tetrahedron} $T_{dual} = \{v_1^{dual}, \ldots, v_4^{dual}\}$ as the tetrahedron (also in $\mathfrak{T}_{\HH^3}$) with vertices $v_i^{dual}$ such that $v_i$, the barycenter $b$ of $T$ and $v_i^{dual}$ lie on the same geodesic for $i = 1, \ldots, 4$. Let $v_i$ and $v_j$ be two distinct vertices of $T$. If we let $v_i = [a_1: b_1] \in  \C\mathbb{P}^1$, $v_i^{dual} = [c_1: d_1] \in  \C\mathbb{P}^1$, $v_j = [a_2: b_2] \in  \C\mathbb{P}^1$, and $v_j^{dual} = [c_2: d_2] \in  \C\mathbb{P}^1$, then the Lagrangian subspace associated to $T$ is 
\[g(T) = W := \langle (b_1 X - a_1 Y)^2 (d_1 X - c_1 Y), (b_2 X - a_2 Y)^2 (d_2 X - c_2 Y) \rangle \in \mathrm{Lag}(\C^{4})\,.\] Its Pl\"ucker coordinates are:
\begin{itemize}
	\item $W_{1,2} = -b_1^2d_1(b_2^2c_2+2a_2b_2d_2) + b_2^2d_2 (b_1^2c_1+2a_1b_1d_1)$;
	\item $W_{1,3} = b_1^2d_1(a_2^2d_2+2a_2b_2c_2) - b_2^2d_2 (a_1^2d_1+2a_1b_1c_1)$;
	\item $W_{1,4} = -b_1^2d_1a_2^2c_2 + b_2^2d_2a_1^2c_1$;
	\item $W_{2,3} = -(b_1^2c_1+2a_1b_1d_1)(a_2^2d_2+2a_2b_2c_2) + (b_2^2c_2+2a_2b_2d_2)(a_1^2d_1+2a_1b_1c_1)$;
	\item $W_{2,4} = a_2^2c_2 (b_1^2c_1+2a_1b_1d_1) - a_1^2c_1 (b_2^2c_2+2a_2b_2d_2)$;
	\item $W_{3,4} = -a_2^2c_2 (a_1^2d_1+2a_1b_1c_1) + a_1^2c_1(a_2^2d_2+2a_2b_2c_2)$.
\end{itemize}
Similarly, given a point $([a:b], [c:d]) \in \C\mathbb{P}^1\times \C\mathbb{P}^1 \setminus \Delta$, where $\Delta = \{([a:b], [a:b]) \in \C\mathbb{P}^1\times \C\mathbb{P}^1\}$ is the diagonal, then the associated Lagrangian subspace is 
\[U = \langle (b X - a Y)^3, (b X - a Y)(d X - c Y)^2 \rangle \in \mathrm{Lag}(\C^{4})\,,\] 
which has Pl\"ucker coordinates:
\begin{itemize}
	\item $U_{1,2} = 2b^3 d (bc-ad)$;
	\item $U_{1,3} = b^2(3ad+bc)(bc-ad)$;
	\item $U_{1,4} = -ba(bc+ad)(bc-ad)$;
	\item $U_{2,3} = -3ba(bc+ad)(bc-ad)$;
	\item $U_{2,4} = a^2 (ad+3bc)(bc-ad)$;
	\item $U_{3,4} = -2a^3c (bc-ad)$.
\end{itemize}
Lastly, given a point $([a:b], [a:b]) \in \Delta \subset \C\mathbb{P}^1\times \C\mathbb{P}^1$, the associated Lagrangian subspace is 
\[Z = \langle (b X - a Y)^3, (b X - a Y)^2 (d X - c Y) \rangle \in \mathrm{Lag}(\C^{4})\,,\] 
for $[c:d]\neq[a:b]\in \C\mathbb{P}^1$, and $Z$ has Pl\"ucker coordinates:
\begin{itemize}
	\item $Z_{1,2} = - 2b^4 (bc-ad)$;
	\item $Z_{1,3} = 2ab^3(bc-ad)$;
	\item $Z_{1,4} = -a^2b^2(bc-ad)$;
	\item $Z_{2,3} = -3a^2b^2(bc-ad)$;
	\item $Z_{2,4} = 2a^3b (bc-ad)$;
	\item $Z_{3,4} = -a^4 (bc-ad)$.
\end{itemize}
We first notice that the tetrahedra in $\mathfrak{T}_{\HH^3}$, since they have maximal volume, can only degenerate so that the barycenter also degenerates (in the sense that it converges to a point in $\C\mathbb{P}^1$). In that case, at least three vertices will converge to the same point in $\C\mathbb{P}^1$, so tetrahedra can only degenerate to points in $\C\mathbb{P}^1\times \C\mathbb{P}^1$, that is $\C\mathbb{P}^1\times \C\mathbb{P}^1 = \partial \mathfrak{T}_{\HH^3}$. 

We can now see that $g|_{\C\mathbb{P}^1\times \C\mathbb{P}^1}$ and $g|_{\mathfrak{T}_{\HH^3}}$ are continuous. For the first case, we only have to consider the expression of the Pl\"ucker coordinates and look at the case $([a_n:b_n], [c_n:d_n]) \in \C\mathbb{P}^1\times \C\mathbb{P}^1 \setminus \Delta$ such that $([a_n:b_n], [c_n:d_n]) \to ([a:b], [a:b]) \in \Delta$. In particular, we can do the calculations in the case that $[a:b]= [0:1]\in \C\mathbb{P}^1$. If we denote $U_{i, j}^n$ the Pl\"ucker coordinates associated to $F\left(([a_n:b_n], [c_n:d_n])\right)$, we can see that the only non-zero coordinate in the limit is $U_{1,2}^n$, as we wanted. For the second case $g|_{\mathfrak{T}_{\HH^3}}$, again, we only have to consider the expression of the Pl\"ucker coordinates in term of the vertices of the tetrahedra. Hence we are left with the discussion of converging sequences $\{T_n\}$ of tetrahedra in $\mathfrak{T}_{\HH^3}$ such that $T_n \to T_\infty \in \C\mathbb{P}^1\times \C\mathbb{P}^1$. We have two possibilities:
\begin{itemize}
	\item $T_\infty = ([a:b], [c:d])\in \C\mathbb{P}^1\times \C\mathbb{P}^1 \setminus \Delta$.
	\item $T_\infty = ([a:b], [a:b])\in \Delta$.
\end{itemize} 

In the first case, three vertices of the tetrahedron $\{T_n\}$  and all the dual vertices of the tetrahedron $\{T_n^{dual}\}$ converge to $[a:b] \in \C\mathbb{P}^1$, while in the second case all the four vertices of the tetrahedron $\{T_n\}$ and at least three dual vertices of the tetrahedron $\{T_n^{dual}\}$ converge to $[a:b] \in \C\mathbb{P}^1$. In particular, in the first case we choose vertices $v_i^n = [a_1^n: b_1^n] \in  \C\mathbb{P}^1$ and $v_j^n = [a_2^n: b_2^n] \in  \C\mathbb{P}^1$ of $T_n$ with dual vertices $(v_i^n)^{dual} = [c_1^n: d_1^n] \in  \C\mathbb{P}^1$ and $(v_j^n)^{dual} = [c_2^n: d_2^n] \in  \C\mathbb{P}^1$, such that
\begin{itemize}
	\item $[a_1^n:b_1^n]\to [a:b]$;
	\item $[c_1^n:d_1^n]\to [a:b]$;
	\item $[a_2^n:b_2^n]\to [c:d]$;
    \item $[c_2^n:d_2^n]\to [a:b]$.
\end{itemize} 
We can also assume $[a:b]= [0:1]$ and $[c:d]= [1:0]$. If we denote $W_{i, j}^n$ the Pl\"ucker coordinates associated to $g\left(T_n\right)$, we can see that the only non-zero coordinate in the limit is $W_{1,3}^n$, as we wanted.

In the second case we choose vertices $v_i^n = [a_1^n: b_1^n] \in  \C\mathbb{P}^1$ and $v_j^n = [a_2^n: b_2^n] \in  \C\mathbb{P}^1$ of $T_n$ with dual vertices $(v_i^n)^{dual} = [c_1^n: d_1^n] \in  \C\mathbb{P}^1$ and $(v_j^n)^{dual} = [c_2^n: d_2^n] \in  \C\mathbb{P}^1$, such that
\begin{itemize}
	\item $[a_1^n:b_1^n]\to [a:b]$;
	\item $[a_2^n:b_2^n]\to [a:b]$.
\end{itemize} 
Again, we can assume $[a:b]= [0:1]$. Let's denote $W_{i, j}^n$ the Pl\"ucker coordinates associated to $F\left(T_n\right)$. We have two cases:
\begin{itemize}
	\item At least one of the sequences $[c_1^n:d_1^n]$ or $[c_2^n:d_2^n]$ do not converge to $[a:b]$, then we can see that the only non-zero coordinate in the limit is $W_{1,2}^n$, as we wanted. 
	\item If both $[c_1^n:d_1^n], [c_2^n:d_2^n]$ converge to $[a:b]$, then we need to be more careful, and analyze the rate of convergence, but after diving all coordinates by $c_1^n$ or $c_2^n$, we can see that the only non-zero coordinate in the limit is $W_{1,2}^n$, as we wanted. 
\end{itemize} 
\end{proof}


The geometric picture discussed in the last step of the proof above using tetrahedra and degenerate tetrahedra and their barycenters inspired the definition of the continuous projection below. Let 
\[\pi_{\beta}\co \mathfrak{T}_{\overline{\HH^3}} \to \overline{\HH^3} := \HH^3 \cup \C\mathbb{P}^1\] 
be the map defined by sending each (possibly degenerate) tetrahedron to its (possibly degenerate) barycenter. By considering the map $Q:= \pi_{\beta} \circ g^{-1}$ we obtain:
\begin{Corollary}
	There is a continuous $\SL(2,\C)$--equivariant projection 
	\[Q\co\mathrm{Lag}(\C^{4}) \to \overline{\HH^3}\,.\]
\end{Corollary}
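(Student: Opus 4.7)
The plan is to verify that the map $Q := \pi_\beta \circ g^{-1}$ has the three required properties—well-definedness, continuity, and $\SL(2,\C)$-equivariance—by breaking the problem into the two factors. Well-definedness is immediate from the bijectivity of $g$ established in Theorem \ref{thm:lagrange}, so the content of the statement lies in continuity and equivariance.

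For continuity, since $g$ is a homeomorphism by Theorem \ref{thm:lagrange}, $g^{-1}$ is continuous, and it suffices to show that $\pi_\beta : \mathfrak{T}_{\overline{\HH^3}} \to \overline{\HH^3}$ is continuous. On the open stratum $\mathfrak{T}_{\HH^3}$ of genuine regular ideal tetrahedra, the barycenter is a smooth function of the four vertices (which vary over the open subset of $(\C\mathbb{P}^1)^4 / S_4$ of pairwise distinct vertices with cross ratio $\frac{1-\sqrt{3}i}{2}$), so $\pi_\beta$ is smooth there. On the closed stratum $\C\mathbb{P}^1 \times \C\mathbb{P}^1$, $\pi_\beta$ is by definition the projection onto the first factor, which is continuous. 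The remaining step is to check the match between strata: given a sequence $T_n \in \mathfrak{T}_{\HH^3}$ converging to a degenerate tetrahedron $([a:b],[c:d])$, one must verify that the barycenters $\pi_\beta(T_n)$ converge to $[a:b] \in \C\mathbb{P}^1 \subset \overline{\HH^3}$. By the analysis in the proof of Theorem \ref{thm:lagrange}, such convergence corresponds to three of the four vertices of $T_n$ clustering at $[a:b]$ while the fourth tends to $[c:d]$; since the barycenter of a regular ideal tetrahedron lies on the common perpendicular geodesics of opposite edges and is preserved by the tetrahedral symmetry group, it must be drawn to the cluster point, and a direct computation in the upper half-space model (normalizing so that $[a:b] = 0$ and working out the geodesic through the cluster) shows that $\pi_\beta(T_n) \to [a:b]$ in $\overline{\HH^3}$.

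For equivariance, the $\SL(2,\C)$-action on $\mathrm{Lag}(\C^4)$ is the one induced by the principal representation, which on homogeneous polynomials is exactly the action on roots by Möbius transformations. Consequently, $g$ intertwines this action with the natural $\SL(2,\C)$-action on $\mathfrak{T}_{\overline{\HH^3}}$ induced from the Möbius action on $\C\mathbb{P}^1 = \partial \HH^3$ (which extends to the isometric $\PSL(2,\C)$-action on $\overline{\HH^3}$). On the open stratum, $\pi_\beta$ commutes with this action because barycenters are preserved by isometries; on the closed stratum $\C\mathbb{P}^1 \times \C\mathbb{P}^1$, $\pi_\beta$ is the projection on the first factor, which is manifestly equivariant. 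Hence $Q$ is $\SL(2,\C)$-equivariant.

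The main obstacle is the continuity of $\pi_\beta$ along the transition between the two strata; the bookkeeping here essentially mirrors the degeneration analysis already carried out in the proof of Theorem \ref{thm:lagrange}, but one must be careful to choose the right normalization (e.g.\ working in the upper half-space with one vertex sent to $\infty$) so that the barycenter's behavior is transparent. Once this is handled, the rest of the argument is a routine packaging of the homeomorphism $g$ with the equivariance of barycenters under hyperbolic isometries.
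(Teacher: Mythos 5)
Your proposal is correct and follows the same route as the paper, which simply defines $Q := \pi_\beta \circ g^{-1}$ and lets the corollary follow from Theorem~\ref{thm:lagrange}; you spell out the continuity and equivariance verifications that the paper leaves implicit. For the boundary-transition continuity of $\pi_\beta$, you can avoid the direct computation by noting that the barycenter lies on each geodesic from a vertex $v_i$ of $T_n$ to its dual $v_i^{\mathrm{dual}}$, and the degeneration analysis already carried out in the proof of Theorem~\ref{thm:lagrange} shows both endpoints of such a geodesic converge to the degenerate barycenter $[a:b]$.
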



\subsection{The domain $\Omega \subset \mathrm{Lag}(\C^{4})$}

We define the map
\[\pi_{\mathcal{P}}\co\overline{\HH^3}\to\overline{\HH^2}\]
as the orthogonal projection into the hyperbolic plane $\mathcal{P}$ bounded by $\R\mathbb{P}^1\subset \C\mathbb{P}^1$. This map is only $\SL(2,\R)$--equivariant. By composition, we obtain a projection
\[\pi_{\mathcal{P}} \circ Q \co \mathrm{Lag}(\C^{4}) \to \overline{\HH^2}\,.\]
By Remark \ref{K_R}, the inverse image of $\R\mathbb{P}^1 = \partial \HH^2$ is the set $K_\R$, and the inverse image of $\HH^2$ is the set
\[\Omega = \mathrm{Lag}(\C^{4}) \setminus K_{\R}\,.\] 
Restricting the map $\pi_{\mathcal{P}} \circ Q$ to $\Omega$, we obtain  
\[q = \pi_{\mathcal{P}} \circ Q|_{\Omega}\co \Omega \to \HH^2\,,\]
an $\SL(2,\R)$--equivariant map from $\Omega$ to $\HH^2$, which is a fiber bundle by Lemma~\ref{lem:fiber bundle}.


We will identify $\HH^2$ with the hyperbolic plane $\mathcal{P} \subset \HH^3$. 
We denote by $\mathcal{O} \in \HH^2 \subset \HH^3$ the point $\mathcal{O} = (0, 1)\in \C \times \R_{>0}$, and by $F_q$ the fiber of $q$ over this point:
\[F = q^{-1}(\mathcal{O}) \subset  \Omega\,.\]
Since $q\co \Omega \to \HH^2$ is a locally trivial fibration over a contractible base, we conclude the following result: 
\begin{Corollary}\label{productF}
  The space $\Omega$ is homeomorphic to the product $F \times \HH^2$, hence $\Omega$ deformation retracts to $F$.
\end{Corollary}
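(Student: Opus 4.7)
The plan is to produce an explicit global trivialization of the fiber bundle $q\colon \Omega \to \HH^2$, from which the deformation retraction statement follows immediately. The key input is the $\SL(2,\R)$-equivariance of $q$: because $\SL(2,\R)$ acts transitively on $\HH^2$ with compact stabilizer $\SO(2)$ at $\mathcal{O}$, the map $\SL(2,\R) \to \HH^2$ is a principal $\SO(2)$-bundle over a contractible base, hence admits a smooth section $s\colon \HH^2 \to \SL(2,\R)$ with $s(\mathcal{O}) = e$. (An explicit such section is provided by the Iwasawa decomposition $\SL(2,\R) = NAK$, which realizes $\HH^2$ as $NA$.)

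With $s$ in hand, I would define
$$\Phi \colon \HH^2 \times F \longrightarrow \Omega, \qquad (x, f) \longmapsto s(x) \cdot f,$$
and check that $\Phi$ is a homeomorphism. Equivariance of $q$ yields $q(s(x)\cdot f) = s(x)\cdot q(f) = s(x)\cdot \mathcal{O} = x$, so each slice $\{x\}\times F$ lands in the correct fiber $q^{-1}(x)$. The inverse is
$$\Psi \colon \Omega \longrightarrow \HH^2 \times F, \qquad y \longmapsto \bigl(q(y),\, s(q(y))^{-1}\cdot y\bigr),$$
which is well-defined because $s(q(y))^{-1}$ carries $q(y)$ to $\mathcal{O}$, so the second coordinate lies in $q^{-1}(\mathcal{O}) = F$. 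Both $\Phi$ and $\Psi$ are continuous since $s$ is smooth and the $\SL(2,\R)$-action on $\Omega$ is continuous.

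Once the homeomorphism $\Omega \cong \HH^2 \times F$ is in place, the second conclusion is formal: any strong deformation retraction of $\HH^2$ onto $\{\mathcal{O}\}$ (for instance, the geodesic contraction $H_t(x) = \gamma_x(1-t)$, where $\gamma_x$ is the geodesic from $\mathcal{O}$ to $x$) can be transported via $\Phi$, keeping the $F$-coordinate fixed, to give a strong deformation retraction of $\Omega$ onto $\Phi(\{\mathcal{O}\}\times F) = F$.

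There is essentially no serious obstacle. Lemma \ref{lem:fiber bundle} already supplies the fiber bundle structure of $q$, and the only non-trivial ingredient is the existence of the section $s$, which is standard. One could alternatively invoke the general fact that locally trivial fiber bundles over contractible paracompact spaces are globally trivial; the homogeneous setup here simply makes the trivialization canonical.
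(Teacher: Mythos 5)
Your proof is correct and rests on the same underlying fact as the paper's one-line argument, which simply observes that a locally trivial fibration over a contractible base is globally trivial. The only difference is that you make the trivialization explicit by choosing a section of the principal $\SO(2)$-bundle $\SL(2,\R)\to\HH^2$ (e.g.\ via the Iwasawa decomposition) and pushing it through the associated-bundle description from Lemma~\ref{lem:fiber bundle}; this is a perfectly valid, slightly more concrete way of saying the same thing, and it does not depart from the paper's reasoning in any substantive way.
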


In the following sections we will describe the topology of $F$. Since $F$ is homotopy equivalent to our smooth fibre $\mathfrak{F}$, this information will allow us to determine $\mathfrak{F}$. 

\section{Spaces of regular tetradera} \label{sub:new_york}

We consider the geodesic $\overline{\ell}:= \pi_\mathcal{P}^{-1}(\mathcal{O})$, where  $\pi_\mathcal{P}\co\overline{\HH^3}\to\overline{\HH^2}$ is the orthogonal projection. The geodesic $\overline{\ell}$ joins the points at infinity $i$ and $-i$. We denote $\ell := \overline{\ell} \cap \HH^3$, so we have $\overline{\ell} = \ell \cup \{\pm i\}$. We denote by $\ell^+$ the ray of $\ell$ from $\mathcal{O}$ to $i$, and by $\ell^-$ the ray from $\mathcal{O}$ to $-i$. In both cases, $\mathcal{O}$ is included, and $\pm i$ is not. Similarly, we denote by $\overline{\ell^+}$ and $\overline{\ell^-}$ the compactified rays that include $\pm i$.  

We will identify $\overline{\ell}$ with the segment $[-\infty, \infty]$ via the homeomorphism 
\[\eta \co \overline{\ell} \to [-\infty, \infty]\]
defined by the following properties 
\begin{itemize}
  \item $\eta (\mathcal{O}) = 0$,
  \item $\eta (\pm i) = \pm \infty$,
  \item for every $x \in \ell^+$, $\eta(x) = d_{\HH^3}(\mathcal{O}, x)$, and
  \item for any $x \in \ell^-$, $\eta(x) = -d_{\HH^3}(\mathcal{O}, x)$.
\end{itemize}  

We define the space $\mathfrak{T}_{\overline{\ell}}$ consisting of (possibly degenerate) tetrahedra with (possibly degenerate) barycenter on the geodesic $\overline{\ell}$. This space is homeomorphic to the fiber $F$: recall that in Theorem \ref{thm:lagrange} we constructed an explicit homeomorphism from the space of (unlabelled) regular ideal tetrahedra to the Lagrangian Grassmannian
\[g\co \mathfrak{T}_{\overline{\HH^3}} =\mathfrak{T}_{\HH^3} \cup \C\mathbb{P}^1\times \C\mathbb{P}^1 \to \mathrm{Lag}(\C^{4})\,.\] 
Then, the fiber $F =  q^{-1}(\mathcal{O}) \subset  \Omega\subset \mathrm{Lag}(\C^{4})$ for the projection 
$q = \pi_{\mathcal{P}} \circ Q|_{\Omega\co \Omega \to \HH^2}$ is exactly the image $g(\mathfrak{T}_{\overline{\ell}})$.  

Our main aim in this section will be to describe the space $\mathfrak{T}_{\overline{\ell}}$. It will be useful to distinguish between three subsets: the open subset $\mathfrak{T}_\ell$ consisting of tetrahedra with barycenter in $\ell$, and the closed subsets $\mathfrak{T}_i$ and $\mathfrak{T}_{-i}$ consisting of degenerate tetrahedra with barycenter in $i$ or $-i$ respectively. 

We have that
\[\mathfrak{T}_i = \{i\}\times \C\mathbb{P}^1, \hspace{2cm} \mathfrak{T}_{-i}=\{-i\}\times  \C\mathbb{P}^1\,.\]

The space $\mathfrak{T}_\ell$ will be described in Section \ref{tetrahedra}. The shape of each of the three pieces, $\mathfrak{T}_\ell, \mathfrak{T}_i, \mathfrak{T}_{-i}$ is easy to understand. The most interesting thing is to describe how they are glued together, which is done in Section \ref{nyconstruction}. 

We consider the upper half space model $\HH^3 = \C \times \R_{>0}$ of hyperbolic space. In this model, the compactified hyperbolic space is $\overline{\HH^3} = \C \times \R_{\geq 0} \cup \{\infty\}$ and its boundary is $\partial\HH^3 = \C \times \{0\} \cup \{\infty\} = \C\mathbb{P}^1$. In the following, with a slight abuse of notation, we will simply use complex numbers (or $\infty$) to denote points of $\partial\HH^3 = \mathbb{CP}^1$. 
We identify $\HH^2$ with the plane $\mathcal{P} = \R \times \R_{>0} \subset \HH^3$ whose boundary is $\partial\HH^2 = \R\mathbb{P}^1 \subset \C\mathbb{P}^1$. Note that $\PSL(2, \R)$ acts preserving $\mathcal{P}$.

%

\subsection{Regular Ideal Hyperbolic Tetrahedra}\label{tetrahedra}

For any $c\in \HH^3$, let $\mathfrak{T}_c$ be the set of regular ideal unlabelled tetrahedra with barycenter $c$. All the spaces $\mathfrak{T}_c$ are homeomorphic to each other.  The space $\mathfrak{T}_\ell$ is homeomorphic to $\mathfrak{T}_c \times \R$ for any choice of $c$. We will now describe  $\mathfrak{T}_c$. 

Note that for all $c\in \ell$ the space $\mathfrak{T}_c$ is homeomorphic to 
\[\mathfrak{T}_c \cong (\mathbb{T}^{1}(\mathbb{S}^2))/A_4 \cong \mathrm{SO}(3)/A_4 \cong \mathbb{T}^{1, orb}(\mathbb{S}^2/A_4)\,.\]
This is a Seifert fibered space --- an orbifold--$\bS^1$--bundle over the $2$--orbifold $\bS^2(2, 3, 3) = \bS^2 / A_4$ --- and it corresponds to the space described by Martelli \cite{mar_ani} in the second line of Table 10.6 for $q = -2$. The structure of Seifert fibered manifold of $\mathfrak{T}_c$ can be described geometrically. 
 
Consider the action of $\SO(2)$ on $\HH^3$ via rotations that fix $\ell$. Since we are assuming that $c\in \ell$, this induces an action of $\SO(2)$ on $\mathfrak{T}_c$. The orbits of this action are the fibers of the Seifert fibration. The three circles associated with the three singular fibers correspond to tetrahedra (with barycenter at the point $c$) with special symmeries: 
\begin{enumerate}[(i)]
  \item the circles associated with the order--$3$ cone points correspond to tetrahedra in $\mathfrak{T}_c$ with one vertex in $-i$ or $i$, respectively, 
  \item the circle associated with the order--$2$ cone point corresponds to tetrahedra in $\mathfrak{T}_c$ with two sides orthogonal to $\ell$. 
\end{enumerate}

We want to decompose $\mathfrak{T}_c$ in two sets:
\[\mathfrak{T}_c = \mathfrak{T}_{c}^{\uparrow} \cup \mathfrak{T}_{c}^{\downarrow}, 
\] 
where  $c \in \ell$.

As above, we denote by $\ell^+_c$ the ray of $\ell$ from $c$ to $i$, and by $\ell^-_c$ the ray from $c$ to $-i$. In both cases, $c$ is included, and $\pm i$ are not. Similarly, we denote by $\overline{\ell^+_c}$ and $\overline{\ell^-_c}$ the compactified rays that include $\pm i$. The boundary of the tetrahedra in the family in $(ii)$ (with order--$2$ symmetry) will intersect $\ell$ in two points: one in $\ell^-_c$ (which we will denote $A_{c}$) and the other in $\ell^+_c$. Let $B_{c}$ be the point in $\ell^-_c$ between $-i$ and $A_c$ and at (hyperbolic) distance $1$ from $A_{c}$. Let $C_{c}$ be circle in $\C\mathbb{P}^1$ that bounds the plane in $\HH^3$ orthogonal to $\ell$ and intersecting it at $B_c$ and let $D_{c}$ (resp. $\overline{D_{c}}$) be the open (resp. closed) disk in $\C\mathbb{P}^1$ with boundary $C_{c}$ and containing $-i$.  

With this we can write 
\[\mathfrak{T}_c = \mathfrak{T}_{c}^{\uparrow} \cup \mathfrak{T}_{c}^{\downarrow},
\] 
where
\begin{itemize}
  \item $\mathfrak{T}_{c}^{\uparrow}$ is the set of tetrahedra in $\mathfrak{T}_c$ such that all vertices are in $\C\mathbb{P}^1 \setminus D_{c}$;
  \item $\mathfrak{T}_{c}^{\downarrow}$ is the set of tetrahedra in $\mathfrak{T}_c$ such that one of their vertices is in $D_{c}$.
\end{itemize}
The set $\mathfrak{T}_{c}^{\uparrow}$ is closed, and the set $\mathfrak{T}_{c}^{\downarrow}$ is open. They share a common boundary
$\partial \mathfrak{T}_{c}^{\uparrow} = \partial \mathfrak{T}_{c}^{\downarrow}$, the set of tetrahedra in $\mathfrak{T}_c$ such that one of their vertices is in $C_{c}$. The following is true:
\begin{Proposition}\label{unique}
  For every $c \in \ell$ and for every $T \in \overline{\mathfrak{T}_{c}^{\downarrow}}$ there is exactly one vertex in $\overline{D_{c}}$.
\end{Proposition}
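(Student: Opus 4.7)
The plan is to reduce the statement to a spherical geometry question at the barycenter. The ``at least one'' half is essentially built into the definition: if $T_n\to T$ in $\mathfrak T_c$ with each $T_n$ carrying a vertex $v_n$ in the open disk $D_c$, then after relabelling and passing to a subsequence, $v_n\to v$ for some vertex $v$ of $T$, and $v$ lies in the compact set $\overline{D_c}$. Moreover, the group of isometries of $\HH^3$ generated by the rotations fixing $\ell$ and the hyperbolic translations along $\ell$ lies inside $\SL(2,\R)$, acts transitively on $\ell$, and carries the whole configuration $(A_c,B_c,C_c,D_c)$ to $(A_{c'},B_{c'},C_{c'},D_{c'})$; so I may fix $c=\mathcal O$ once and for all. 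The only real content is therefore the ``at most one'' clause.

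For this I will pass to the visual sphere $S^2_c=T_c^1\HH^3$ via the standard homeomorphism $\partial\HH^3\to S^2_c$ sending an ideal point to the direction of the geodesic from $c$ to it. Two geometric inputs will drive the argument. Firstly, the stabilizer in $\mathrm{Isom}(\HH^3)$ of any $T\in\mathfrak T_c$ fixes $c$ and permutes its four vertices as $A_4$, so its linearization at $c$ sits inside $\mathrm{O}(3)$ with tetrahedral symmetry; hence the four visual directions from $c$ to the vertices of $T$ form a regular tetrahedral configuration on $S^2_c$, with pairwise angular distance exactly $\alpha=\arccos(-1/3)$. Secondly, the closed disk $\overline{D_c}$ maps under the visual map to a closed spherical cap $\overline{\Delta}\subset S^2_c$ centered at the direction $\partial_-$ from $c$ toward $-i$; its angular radius is the angle at $c$ in the asymptotic right triangle with vertices $c$, $B_c$, and any ideal $p\in C_c$ (right angle at $B_c$), namely the angle of parallelism $\theta^\ast=\Pi(r)$ at $r=d_{\HH^3}(c,B_c)=d_0+1$, where $d_0:=d_{\HH^3}(c,A_c)$. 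Equivalently, $\cos\theta^\ast=\tanh(d_0+1)$.

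The quantitative heart of the proof is then to compute $d_0$ and verify $2\theta^\ast<\alpha$. Placing $c$ at the origin of the Poincar\'e ball model and the four ideal vertices at the corners of a Euclidean regular tetrahedron on $S^2$, a direct computation shows that the perpendicular foot from $c$ onto the geodesic through any two vertices has Euclidean norm $\sqrt{3}-\sqrt{2}$, which I will convert into $\cosh d_0=\sqrt{6}/2$, equivalently $\tanh d_0=1/\sqrt{3}$. The hyperbolic addition formula for $\tanh$ then gives
\[
\cos\theta^\ast\;=\;\tanh(d_0+1)\;=\;\frac{1/\sqrt{3}+\tanh 1}{1+\tanh(1)/\sqrt{3}}\;\approx\;0.93,
\]
so $\theta^\ast\approx 21.6^\circ$ and $2\theta^\ast\approx 43.2^\circ<\alpha\approx 109.5^\circ$, with a comfortable margin.

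To finish, since any two points of $\overline\Delta$ are at angular distance at most $2\theta^\ast$, the cap $\overline\Delta$ can contain at most one of the four vertex-directions of $T$; equivalently, $T$ has at most one vertex in $\overline{D_c}$. Combined with the first paragraph this gives exactly one. The only real obstacle I foresee is the explicit computation of $d_0$ and the verification of the inequality $2\Pi(d_0+1)<\arccos(-1/3)$; everything else reduces cleanly to comparing one fixed spherical cap with the fixed angular separation $\arccos(-1/3)$ of a regular tetrahedral configuration.
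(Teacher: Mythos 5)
Your proposal is correct, and it takes a genuinely different route from the paper. The paper argues ``downstairs'' in $\HH^3$: it computes the distance $\ln\sqrt{2}$ from the barycenter to a face and the distance $d_0=|\ln\tfrac{\sqrt{6}-\sqrt{2}}{2}|$ from the barycenter to $A_c$, then, for a tetrahedron with a vertex $v_T$ on $C_c$, it identifies the circle $C_T$ through the other three vertices (bounding a disk perpendicular to the geodesic from $c$ to $v_T$ at distance $\ln\sqrt{2}$ from $c$) and asserts that $C_T$ misses $\overline{D_c}$. Notably the paper's written proof only treats the boundary stratum $T\in\partial\mathfrak T_c^{\downarrow}$ and leaves the final geometric verification as a ``we just have to check'' step. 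Your approach instead linearizes at $c$: you pass to the visual sphere $S^2_c$, where the four vertex directions form a regular tetrahedral configuration at uniform pairwise angle $\arccos(-1/3)$ (justified by the $A_4$-symmetry fixing $c$), and $\overline{D_c}$ becomes a spherical cap of angular radius $\Pi(d_0+1)$, with $\cos\Pi(d_0+1)=\tanh(d_0+1)$. The single clean inequality $2\Pi(d_0+1)\approx 43^\circ<\arccos(-1/3)\approx 109.5^\circ$ then settles ``at most one'' for every $T\in\overline{\mathfrak T_c^{\downarrow}}$ at once, not only the boundary case. Your value $\tanh d_0=1/\sqrt{3}$ (equivalently $\cosh d_0=\sqrt{6}/2$) agrees with the paper's $d_0$, so the two proofs share exactly one computation; the extra ingredient in the paper (distance $\ln\sqrt{2}$ to a face) is replaced in yours by the angular separation $\arccos(-1/3)$. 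The net effect is that your argument is both more uniform in $T$ and more explicitly quantitative than what is printed. One small slip: the group generated by rotations about $\ell$ and hyperbolic translations along $\ell$ does \emph{not} lie in $\PSL(2,\R)$ --- the elements of $\PSL(2,\R)$ fixing both $i$ and $-i$ in $\CP^1$ are precisely the elliptic rotations $\PSO(2)$, while the translations along $\ell$ are loxodromic elements of $\PSL(2,\C)$ outside $\PSL(2,\R)$. This is harmless, since the reduction to $c=\mathcal O$ only needs an isometry of $\HH^3$ intertwining the configurations $(A_c,B_c,C_c,D_c)$, and in fact the rest of your argument is intrinsic to $c$ and does not use the reduction at all.
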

 
 \begin{proof}
First we notice that the (hyperbolic) distance between the barycenter and the faces of a regular tetrahedron $T \in \mathfrak{T}_{\HH^3}$ is $\ln{\sqrt{2}}$. To do that, we consider the tetrahedron $T$ with vertices $\{\infty, -1, \frac{1+\sqrt{3}i}{2}, \frac{1-\sqrt{3}i}{2}\}$. We can check that $T$ is regular by calculating the cross ratio $[\infty, -1, \frac{1+\sqrt{3}i}{2}, \frac{1-\sqrt{3}i}{2}]$. The barycenter of this tetrahedron is $(0, 0, \sqrt{2})$, which corresponds to the point of intersection between the geodesic passing through $0$ and $\infty$ and the geodesic passing through $-1$ and orthogonal to the plane $\{x = \frac{1}{2}\}$ in $\HH^3$ (which is the plane containing $\infty$, $\frac{1+\sqrt{3}i}{2}$ and $\frac{1-\sqrt{3}i}{2}$).  It is easy now to se that the (hyperbolic) distance between the barycenter and the face of $T$ passing trough $\{-1, \frac{1+\sqrt{3}i}{2}, \frac{1-\sqrt{3}i}{2}\}$ is $\ln{\sqrt{2}}$. we can then also calculate the dual tetrahedron $T^{dual} = \{0, \frac{1}{2}, \frac{1-\sqrt{3}i}{4}, \frac{1+\sqrt{3}i}{4}\}$.

Second, we notice that, given a tetrahedron with with order--$2$ symmetry, the (hyperbolic) distance between its barycenter $c$ and the point $A_c$ is $\ln\left(\tfrac{1}{2}(\sqrt{6}-\sqrt{2})\right)$. To prove this, we consider the tetrahedron $T = \{1, -1, (2-\sqrt{3})i, -(2-\sqrt{3})i\}$. The barycenter of this tetrahedron is 
$(0, 0, \tfrac{1}{2}(\sqrt{6}-\sqrt{2}))$,
and the distance between the barycenter and the geodesic between $1$ and $-1$ (or equivalently, the (hyperbolic) distance between the barycenter and the geodesic between $(2-\sqrt{3})i$ and $-(2-\sqrt{3})i$) is $\ln\left(\tfrac{1}{2}(\sqrt{6}-\sqrt{2})\right)$.

Finally, given the calculations above, we can conclude the proof. Let $c \in \ell$, and let $T \in \partial \mathfrak{T}_{c}^{\downarrow}$ be a tetrahedra with one vertex $v_T$ on the circle $C_{c}$. Then the other three vertices of $T$ (different from $v_T$) lies on a circle $C_T$, spanning a disc $D_T \in \HH^3$ perpendicular to the geodesic between $v_T$ and $c$ and intersecting it at the point at distance $\mathrm{ln}\sqrt{2}$ from $c$ and farthest from $v_T$. Then, in order to prove the result above, we just have to check that check that the circle $C_T$ does not intersect $\overline{D}_{c}$. 
    \end{proof}
  

We can describe more precisely the topology of $\mathfrak{T}_{c}^{\uparrow}$ and $\mathfrak{T}_{c}^{\downarrow}$.

\begin{Proposition}\   \label{prop:upanddown}
\begin{itemize}
  \item $\mathfrak{T}_{c}^{\uparrow}$ is homeomorphic to the complement of an open tubular neighborhood of a $(2, 3)$--torus knot (or, equivalently, a trefoil knot);
  \item $\mathfrak{T}_{c}^{\downarrow}$ is homeomorphic to a solid torus.
\end{itemize}
 \end{Proposition}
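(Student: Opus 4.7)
The approach is to treat the two pieces separately. The plan for $\mathfrak{T}_c^{\downarrow}$ is to realise it as an explicit trivial circle bundle over a disc, using Proposition \ref{unique}, and the plan for $\mathfrak{T}_c^{\uparrow}$ is to use the natural $\SO(2)$-action by rotations around $\ell$ to recognise it as a Seifert fibered manifold with base orbifold $D^2(2,3)$ and then to match this against the trefoil complement.

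For $\mathfrak{T}_c^{\downarrow}$, I would define the map
\[
\Phi\co \overline{\mathfrak{T}_c^{\downarrow}} \longrightarrow \overline{D_c},
\qquad T \longmapsto \text{the unique vertex of } T \text{ in } \overline{D_c},
\]
which is well-defined by Proposition \ref{unique}, and then check that it is a locally trivial $\bS^1$-bundle. Given $v\in \overline{D_c}$, any regular ideal tetrahedron with barycenter $c$ and one vertex at $v$ has its opposite face in the hyperbolic plane perpendicular to the geodesic $\overline{cv}$ at hyperbolic distance $\ln\sqrt{2}$ from $c$; the remaining freedom is the angular position of this equilateral triangle around the axis $\overline{cv}$, and (because the tetrahedra are unlabelled) it is defined only modulo the $\mathbb{Z}/3$-rotation permuting the other three vertices, producing a fibre $\Phi^{-1}(v)\cong \bS^1$. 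Since $\overline{D_c}\cong D^2$ is contractible, such a bundle must be trivial, so $\overline{\mathfrak{T}_c^{\downarrow}}\cong D^2\times \bS^1$ is a solid torus.

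For $\mathfrak{T}_c^{\uparrow}$ I would use the $\SO(2)$-action on $\mathfrak{T}_c$, which turns $\mathfrak{T}_c \cong \SO(3)/A_4$ into a Seifert fibered space over $\bS^2(2,3,3)$ with the three singular fibres of multiplicities $3,3,2$ described in the text. Since $D_c$ is $\SO(2)$-invariant, $\overline{\mathfrak{T}_c^{\downarrow}}$ is an $\SO(2)$-invariant solid torus containing exactly the order-$3$ fibre associated to the vertex $-i \in D_c$. The other order-$3$ fibre (vertex at $i\not\in \overline{D_c}$) clearly lies in $\mathfrak{T}_c^{\uparrow}$. For the order-$2$ fibre I would argue by symmetry: each of its tetrahedra is preserved by the $\pi$-rotation around $\ell$, which in turn preserves $\overline{D_c}$ and fixes no vertex of the tetrahedron (since none of the vertices coincides with $\pm i$); hence any vertex lying in $\overline{D_c}$ would be paired by this involution with a second vertex in $\overline{D_c}$, contradicting Proposition \ref{unique}. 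Thus the order-$2$ fibre also lies in $\mathfrak{T}_c^{\uparrow}$, and the quotient $\mathfrak{T}_c^{\uparrow}/\SO(2)$ is obtained from $\bS^2(2,3,3)$ by removing an open disc neighbourhood of the $-i$ cone point, yielding the orbifold $D^2(2,3)$.

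The final step is to identify $\mathfrak{T}_c^{\uparrow}$ with the exterior of the trefoil. The plan is to invoke the classical description of the complement of the $(2,3)$-torus knot in $\bS^3$ as a Seifert fibered manifold over $D^2(2,3)$ with singular fibres of multiplicities $2$ and $3$, together with the fact that a compact Seifert fibered $3$-manifold with a single torus boundary is determined up to homeomorphism by its base orbifold and the multiplicities of its singular fibres: different admissible Seifert invariants $(\alpha_i,\beta_i)$ are related by Dehn twists along the boundary torus, which extend to self-homeomorphisms of the manifold. The main obstacle will lie in this last identification --- specifically in locating the order-$2$ singular fibre correctly on the $\mathfrak{T}_c^{\uparrow}$ side via the symmetry argument above, and in invoking the Seifert classification cleanly --- whereas the circle-bundle step for $\mathfrak{T}_c^{\downarrow}$ is, by comparison, essentially routine once Proposition \ref{unique} is available.
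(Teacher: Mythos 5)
Your proof is correct and takes essentially the same approach as the paper: recognize $\mathfrak{T}_c^{\downarrow}$ as a circle bundle over $\overline{D_c}$ via the vertex map $T \mapsto v_T$, and identify $\mathfrak{T}_c^{\uparrow}$ with the trefoil complement by matching Seifert fibered structures over $D^2(2,3)$. You supply the details the paper compresses into ``we can easily see'' --- in particular, the $\pi$-rotation argument showing that the order-$2$ singular fiber lies in $\mathfrak{T}_c^{\uparrow}$, and the appeal to the fact that an orientable Seifert fibered $3$-manifold with torus boundary is determined by its base orbifold and multiplicities, are exactly what is needed to make the ``we can easily see'' step rigorous.
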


\begin{proof}
Since we know that $\mathfrak{T}_{c} \cong \mathrm{SO}(3)/A_4$, its Seifert structure is well known, see, for example, the second line of Table 10.6 in Martelli \cite{mar_ani} with $q = -2$. 
In order to prove the first claim, we need to understand the Seifert structure of the trefoil knot complement. This is described in Moser \cite{Moser}. We can easily see that it is the same as the one of $\mathfrak{T}_{c}^{\uparrow}$.

For the second claim, we will describe explicit coordinates for $\mathfrak{T}_{c}^{\downarrow}$ as we will need them in the following section. We consider the family $\mathfrak{T}_{c}^{\downarrow, 3}$ of the tetrahedra in $\mathfrak{T}_c^{\downarrow}$ with bottom vertex in $-i$. This is one of the families of tetrahedra with the order--$3$ symmetry. We introduce a certain parametrization of $\mathfrak{T}_{c}^{\downarrow}\setminus \mathfrak{T}_{c}^{\downarrow, 3} \cong \mathbb{A}\times \bS^1$, where $\mathbb{A} \cong \bS^1\times(0,1)$ as follows. If $T \in \overline{\mathfrak{T}_c^{\downarrow}}\setminus \mathfrak{T}_{c}^{\downarrow, 3}$, let $v_T$ be the unique vertex of $T$ in $\overline{D_{c}}\setminus \{-i\}$. We will parametrize $\overline{D_{c}}\setminus \{-i\}$ with polar coordinates centered at $-i$: 
\[\overline{D_{c}}\setminus \{-i\} \cong \mathbb{A} \cong \bS^1\times (0,1)\,.\] 
We let $\theta_T \in \bS^1 = \R/(\frac{2\pi}{3}\Z)$ be the angle defined by the other three vertices as follows. Once $v_T$ is fixed (and $c$ is fixed), the other three vertices lie in (the boundary of) a totally geodesic plane $P_{T}$ and have an order--$3$ invariance. We need to define what is $0\in \bS^1 \cong \R/(\frac{2\pi}{3}\Z)$ in order to being able to measure the angle $\theta_T$. Consider the totally geodesic plane $Q_T$ orthogonal to $\ell$ and passing through $v_T$; it intersect $P_T$ in two points. Define $0\in \bS^1 = \R/(\frac{2\pi}{3}\Z)$ to be the point with lower height.	
\end{proof}


\subsection{Description of the construction}  \label{nyconstruction}

%




In this section, we are going to study the topology of the space $\mathfrak{T}_{\overline{\ell}}$, and prove that it is homeomorphic to a certain quotient $\mathfrak{T}_{\mathcal{O}} \times [-\infty, +\infty] / \sim$, where $\mathcal{O} = \ell \cap \mathcal{P} \in \ell$. 

In order to define the construction, we need to use the following maps:
 \begin{itemize}
  \item $\iota\co \overline{\HH^3} \to \overline{\HH^3}$ is the reflection in the plane $\mathcal{P}$ with boundary $\R\mathbb{P}^1 \subset \C\mathbb{P}^1$. 
  \item $L_{\lambda}^+\co \overline{\HH^3} \to \overline{\HH^3}$ is the hyperbolic isometry of $\HH^3$ with axis $\ell$ and translation length $\lambda\in \R_{>0}$ and attracting fixed point $i$;
  \item $L_{\lambda}^-\co \overline{\HH^3} \to \overline{\HH^3}$ is the hyperbolic isometry of $\HH^3$ with axis $\ell$ and translation length $\lambda\in \R_{>0}$ and attracting fixed point $-i$.
\end{itemize} 

The isometry $L_{\lambda}^+$ can be used to move a tetrahedron in $\mathfrak{T}$. 

\begin{Proposition}
  The transformations $L_{\lambda}^{\pm}$ satisfy the following properties:
  \begin{enumerate}
    \item $L_{\lambda}^{\pm}(\mathfrak{T}_c) = \mathfrak{T}_{L_{\lambda}^{\pm}(c)}$;
    \item $L_{\lambda}^{\pm}(B_{c}) = B_{L_{\lambda}^{\pm}(c)}$.
  \end{enumerate}
\end{Proposition}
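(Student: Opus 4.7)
The strategy is to observe that $L_\lambda^\pm$ is a hyperbolic isometry of $\HH^3$ with axis $\ell$, so it preserves $\ell$ setwise and, being hyperbolic with axis $\ell$, fixes both endpoints $i$ and $-i$. As an isometry, it sends regular ideal tetrahedra to regular ideal tetrahedra, preserves hyperbolic distances, and is barycenter-equivariant: if $c$ is the barycenter of $T$, then $L_\lambda^\pm(c)$ is the barycenter of $L_\lambda^\pm(T)$. All the constructions involved ($\mathfrak{T}_c$, $A_c$, $B_c$) are defined intrinsically in terms of the hyperbolic geometry of $\HH^3$ and the fixed data $(\ell, i, -i)$, so this equivariance should immediately propagate through the definitions.

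For property (1), the plan is to use barycenter-equivariance directly. If $T \in \mathfrak{T}_c$, then $T$ is a regular ideal tetrahedron with barycenter $c$, so $L_\lambda^\pm(T)$ is a regular ideal tetrahedron with barycenter $L_\lambda^\pm(c)$, hence $L_\lambda^\pm(T) \in \mathfrak{T}_{L_\lambda^\pm(c)}$. Applying the same argument to the inverse isometry $(L_\lambda^\pm)^{-1}$, which has the same axis $\ell$, yields the reverse inclusion. In the degenerate case where $c \in \{i,-i\}$, one further checks that the extension of $L_\lambda^\pm$ to $\overline{\HH^3}$ preserves the stratification $\mathfrak{T}_i = \{i\}\times \C\mathbb{P}^1$ and $\mathfrak{T}_{-i} = \{-i\}\times\C\mathbb{P}^1$, which holds because $\pm i$ are fixed and $L_\lambda^\pm$ acts on $\C\mathbb{P}^1 = \partial\HH^3$ as a M\"obius transformation.

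For property (2), I trace through the geometric construction of $B_c$. Since $L_\lambda^\pm$ preserves $\ell$ and fixes both endpoints $\pm i$, it preserves the orientation of $\ell$ and hence maps $\ell_c^-$ to $\ell_{L_\lambda^\pm(c)}^-$. By part (1), $L_\lambda^\pm$ maps the subfamily of tetrahedra in $\mathfrak{T}_c$ with order-$2$ symmetry (the singular circle (ii) of the Seifert fibration) to the corresponding subfamily in $\mathfrak{T}_{L_\lambda^\pm(c)}$, because being invariant under the stabilizer of $\ell$ in the rotation group of the tetrahedron is a property preserved by any isometry fixing $\ell$ pointwise-at-infinity. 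The point $A_c$ is characterized as the intersection of these tetrahedra's boundaries with $\ell_c^-$, so $L_\lambda^\pm(A_c) = A_{L_\lambda^\pm(c)}$. Finally, since $B_c$ is the unique point on $\ell_c^-$ at hyperbolic distance $1$ from $A_c$ on the $-i$-side, and $L_\lambda^\pm$ is a distance-preserving, orientation-preserving self-map of $\ell$, the image $L_\lambda^\pm(B_c)$ satisfies the analogous characterization with respect to $A_{L_\lambda^\pm(c)}$, hence equals $B_{L_\lambda^\pm(c)}$.

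There is no serious obstacle here: both statements are essentially functoriality assertions that follow from the isometry property of $L_\lambda^\pm$ combined with its preservation of the auxiliary data $(\ell,i,-i)$. The only point requiring a modicum of care is checking that $L_\lambda^\pm$ preserves the \emph{orientation} of $\ell$ (so that $\ell_c^-$ is sent to $\ell_{L_\lambda^\pm(c)}^-$ rather than to $\ell_{L_\lambda^\pm(c)}^+$), which is immediate from the fact that both $i$ and $-i$ are fixed.
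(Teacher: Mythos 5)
The paper states this proposition without proof, evidently regarding it as immediate; your argument correctly supplies the unwritten justification. Both parts follow from the fact that $L_\lambda^\pm$ is an isometry of $\HH^3$ preserving $\ell$ and fixing $\pm i$ (hence preserving the rays $\ell_c^\pm$, the $\SO(2)$-action around $\ell$, and all the derived data $A_c$, $B_c$), which is exactly what you establish. One small remark: $\mathfrak{T}_c$ is defined only for $c\in\HH^3$, and $A_c,B_c$ only for $c\in\ell$, so the excursion into the degenerate strata $\mathfrak{T}_{\pm i}$ is unnecessary for this proposition, though harmless.
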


Together with the $L_\lambda^+$, we will also need a companion map that  we will denote by $M_\lambda$. This will be, for every $c\in \ell$, the map
\[M_\lambda: \overline{\mathfrak{T}_c^{\downarrow}} \rightarrow \mathfrak{T}_{L_{\lambda}^+(c)}^{\downarrow}\, \]
that moves the barycenter of the tetrahedra along $\ell$ according to $L_\lambda^+$, but does not move the bottom vertex in $\overline{D_c}$. In order to define $M_{\lambda}$, we use the coordinates on $\mathfrak{T}_c^{\downarrow}$ described in the proof of Proposition \ref{prop:upanddown}. The map $M_\lambda$ is defined as follows: 

\begin{itemize}
  \item If $T\in \mathfrak{T}_{c}^{\downarrow,3}$, let $M_{\lambda}(T) = L_{\lambda}^+(T)$.
  \item For every tetrahedron $T \in \overline{\mathfrak{T}_{c}^{\downarrow}} \setminus \mathfrak{T}_{c}^{\downarrow,3}$, let $M_{\lambda}(T)$ be the tetrahedron in $\mathfrak{T}_{L_{\lambda}^+(c)}^{\downarrow}$ with barycenter in $L_{\lambda}^+(c)$, same `bottom' vertex $v_T$ and same angle $\theta_T$.
\end{itemize}

We now have to prove the following fact:
\begin{Lemma}
$M_{\lambda}$ is continuous on $\overline{\mathfrak{T}_{c}^{\downarrow}}$.
\end{Lemma}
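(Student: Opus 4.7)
\smallskip

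The plan is to split the analysis into the two regions on which $M_\lambda$ has distinct definitions. On the open set $\overline{\mathfrak{T}_{c}^{\downarrow}}\setminus \mathfrak{T}_{c}^{\downarrow,3}$, continuity is immediate from the definition in coordinates: in the chart $(v_T,\theta_T) \in (\overline{D_c}\setminus\{-i\})\times S^1$ introduced in the proof of Proposition~\ref{prop:upanddown}, $M_\lambda$ acts as the inclusion $\overline{D_c}\hookrightarrow \overline{D_{L_\lambda^+(c)}}$ on the first factor (valid because $L_\lambda^+$ translates $c$ toward $i$, which enlarges the perpendicular disk) and as the identity on the $S^1$-factor. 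Continuity on $\mathfrak{T}_{c}^{\downarrow,3}$ away from its interface with the open region is likewise immediate, since there $M_\lambda$ coincides with the smooth isometry $L_\lambda^+$. This handles also points of $\partial\mathfrak{T}_c^{\downarrow}$ where $v_T\in C_c$, since the parameterization extends continuously to the boundary circle.

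The only delicate point is sequential continuity at a point $T_0\in \mathfrak{T}_{c}^{\downarrow,3}$. Let $T_n\to T_0$ in $\overline{\mathfrak{T}_c^{\downarrow}}$; we may pass to a subsequence and assume either all $T_n\in \mathfrak{T}_c^{\downarrow,3}$ (in which case $M_\lambda(T_n)=L_\lambda^+(T_n)\to L_\lambda^+(T_0)=M_\lambda(T_0)$ by continuity of the isometry $L_\lambda^+$), or all $T_n\notin \mathfrak{T}_c^{\downarrow,3}$, so that $v_{T_n}\to -i$. It therefore suffices to treat the second case. The strategy is to compare $M_\lambda(T_n)$ with $L_\lambda^+(T_n)$, which is a good \emph{a priori} candidate for the limit since $L_\lambda^+(T_n)\to L_\lambda^+(T_0)=M_\lambda(T_0)$ by continuity of $L_\lambda^+$.

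Both tetrahedra $M_\lambda(T_n)$ and $L_\lambda^+(T_n)$ lie in $\mathfrak{T}_{L_\lambda^+(c)}$, and by construction they share the same barycenter; their bottom vertices are $v_{T_n}$ and $L_\lambda^+(v_{T_n})$ respectively, both of which converge to $L_\lambda^+(-i)=-i$. Consequently, $M_\lambda(T_n)$ and $L_\lambda^+(T_n)$ differ by a rotation $R_n$ of $\HH^3$ fixing $L_\lambda^+(c)$ and sending $L_\lambda^+(v_{T_n})$ to $v_{T_n}$; among the $SO(2)$-family of such rotations, the one realizing $M_\lambda(T_n) = R_n\cdot L_\lambda^+(T_n)$ is determined (modulo the stabilizer of the tetrahedron) by requiring that the angle parameter be preserved. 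As the chordal distance $d_{\C\mathbb{P}^1}(v_{T_n}, L_\lambda^+(v_{T_n}))\to 0$, any such $R_n$ accumulates on the $SO(2)$-subgroup of rotations around $\ell$, which acts on $L_\lambda^+(T_0)\in \mathfrak{T}_{L_\lambda^+(c)}^{\downarrow,3}$ through the $\mathbb{Z}/3$ stabilizer of this order-$3$ symmetric tetrahedron. Hence every accumulation of $R_n\cdot L_\lambda^+(T_n)$ equals $L_\lambda^+(T_0)$ as an unlabelled tetrahedron, i.e. $M_\lambda(T_n)\to M_\lambda(T_0)$.

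The principal difficulty to be addressed carefully is verifying that the limiting rotation in the last step does land in the stabilizer of $L_\lambda^+(T_0)$. This is where the $(3,1)$-type Seifert structure of the fibration $v_\bullet\co \mathfrak{T}_c^{\downarrow}\to \overline{D_c}$ near the singular fiber, together with the angle-matching clause in the definition of $M_\lambda$, is used; concretely, one tracks how the reference direction for $\theta_T$ (the intersection of the plane $Q_T$ with the triangular face $P_T$) rotates as $v_T$ circles the point $-i$, and checks that the resulting monodromy is compatible with the order-$3$ symmetry of the limiting tetrahedron.
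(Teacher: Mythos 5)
Your reduction to the case $T_n\notin\mathfrak{T}_c^{\downarrow,3}$, $T_n\to T_0\in\mathfrak{T}_c^{\downarrow,3}$ is the right and necessary focus, and the idea of comparing $M_\lambda(T_n)$ with $L_\lambda^+(T_n)$ is a reasonable device. But as written the argument has a genuine gap at precisely the point you yourself flag. After establishing that the comparison rotations $R_n$ accumulate on the whole circle $\SO(2)$ of rotations about $\ell$, you conclude that every accumulation point of $R_n\cdot L_\lambda^+(T_n)$ equals $L_\lambda^+(T_0)$; this does not follow. The $\SO(2)$-orbit of $L_\lambda^+(T_0)$ is the singular fiber of order $3$, which is a circle, not a point, and $\SO(2)$ acts on $L_\lambda^+(T_0)$ with stabilizer only $\mathbb{Z}_3$. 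So if some subsequence of $R_n$ limits to $R\in\SO(2)\setminus\mathbb{Z}_3$, then the corresponding subsequence of $M_\lambda(T_n)$ limits to $R\cdot L_\lambda^+(T_0)\neq L_\lambda^+(T_0)$, and continuity fails. To close the argument you would have to prove that \emph{every} limit of $R_n$ lies in the $\mathbb{Z}_3$-stabilizer; this is exactly the ``monodromy compatibility'' you defer to the last paragraph, and it is the entire mathematical content of the lemma.

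The paper avoids the rotation bookkeeping by working directly in the coordinates $(v_n,\theta_n)$, with $v_n=(r_n,\phi_n)$ in polar coordinates about $-i$ chosen so that the ray $\phi=0$ passes through a vertex of $T_0$. The key quantitative fact (stated, though also rather briefly justified, in the paper) is that convergence $T_n\to T_0\in\mathfrak{T}_c^{\downarrow,3}$ is equivalent to $r_n\to 0$ together with $\theta_n+\phi_n\to[\pi]$ in $\R/(\tfrac{2\pi}{3}\Z)$. Since $M_\lambda$ fixes the vertex $v_T$ and the angle $\theta_T$, both $\phi_n$ and $\theta_n$ are unchanged, and the ray $\phi=0$ still passes through a vertex of $L_\lambda^+(T_0)$ (because $L_\lambda^+$ is a pure translation along $\ell$ and hence preserves longitudes). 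Thus the characterizing quantity $\theta_n+\phi_n$ is invariant under $M_\lambda$, and continuity at $T_0$ follows immediately. That computation is precisely what is missing from your argument; if you carry out the monodromy check you allude to, you will in effect be rederiving the $\R/(\tfrac{2\pi}{3}\Z)$-valued invariant $\theta+\phi$, so there is no real saving in taking the rotation-comparison detour.

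One smaller point worth noting: your claim that, among rotations fixing $L_\lambda^+(c)$ and carrying $L_\lambda^+(v_{T_n})$ to $v_{T_n}$, the one carrying $L_\lambda^+(T_n)$ to $M_\lambda(T_n)$ is ``determined by requiring that the angle parameter be preserved'' is circular as stated; the rotation is determined (modulo the order-$3$ stabilizer) simply by sending the two labelled tetrahedra to each other, and the angle-preservation is a consequence of the definition of $M_\lambda$, not an additional normalization. This does not affect the gap above, but it should be cleaned up if you pursue this route.
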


\begin{proof}
The continuity comes from the fact that the definition of the planes $Q_T$ and $P_T$ depends continuously on $T$ and so does the definition of the angle $\theta_T$. More precisely, let's consider a sequence $T_n\in \overline{\mathfrak{T}_c^{\downarrow}}\setminus \mathfrak{T}_{c}^{\downarrow, 3}$ such that $T_n\to T\in \mathfrak{T}_{c}^{\downarrow, 3}$. 
Remember that we can parametrize any $T_n \in \overline{\mathfrak{T}_c^{\downarrow}}\setminus \mathfrak{T}_{c}^{\downarrow, 3}$ with a pair $(v_n, \theta_n)$, where $v_n = v_{T_n}\in \overline{D_{c}}\setminus \{-i\}$. We consider polar coordinates on $\overline{D_{c}}\setminus \{-i\}$, in this way $v_n = (r_n, \phi_n) \in (0, 1] \times \R/(2\pi\Z)$. We can always choose the polar coordinates in such a way that the line $(r,0)$ contains one of the vertices of $T$. The fact that $T_n\to T\in \mathfrak{T}_{c}^{\downarrow, 3}$ implies that $r_n \to 0$. Moreover, it implies that $\theta_n + \phi_n \to [\pi] \in \R/(\frac{2\pi}{3}\Z)$. The map $M_{\lambda}$ leaves the angles $\theta_n$ fixed, hence the sequence $M_\lambda(T_n)$ still converges to $M_\lambda(T)$. This shows the continuity of $M_{\lambda}$, as we wanted.
\end{proof}

For every point $z \in \C\mathbb{P}^1 \setminus \{\pm i\}$, consider the unique hyperbolic plane perpendicular to $\ell$ and containing $z$ in its boundary, and denote by $d$ the intersection of this plane with $\ell$. Denote by $h_z := \eta(d)\in \R$ the \emph{height} of $z$. If $z \in \{\pm i\}$, we define $h_i := +\infty$ and $h_{-i} := -\infty$.

For every tetrahedron in $F$, we denote by $b_T$ its barycenter. For every $c \in \ell$ and for every tetrahedron in $\overline{\mathfrak{T}_c^{\downarrow}}$, we denote by $v_T$ the unique vertex of $T$ in $\overline{D_{c}}$ and by by $h_T$ the height $h_{v_T}$.


\begin{Theorem}\label{phi}
There is a continuous surjective map 
\[\Phi \co \mathfrak{T}_{\mathcal{O}} \times [-\infty, +\infty] \to \mathfrak{T}_{\overline{\ell}}\] 
such that
\begin{enumerate}
    \item For all $T\in \mathfrak{T}_{\mathcal{O}}$, and $s\in [-\infty,+\infty]$, $\Phi(T,-s) = \iota(\Phi(\iota(T),s))$;
    \item For all $T \in \mathfrak{T}_{\mathcal{O}}$, $\Phi(T,0) = T$;
    \item For all $T \in \mathfrak{T}_{\mathcal{O}}$, and $s \in [0,+\infty)$, $\Phi(T,s) \in \{T \in \mathfrak{T} \mid b_T \in \ell^+\}$;
    \item The restriction
    \[\Phi|_{\mathfrak{T}_{\mathcal{O}} \times (-\infty, +\infty)}\co \mathfrak{T}_{\mathcal{O}} \times (-\infty, +\infty) \to \mathfrak{T}_\ell\] 
    is a homeomorphism;
    \item $\Phi(\mathfrak{T}_{\mathcal{O}}^{\uparrow}\times \{+\infty\}) = \{(+ i, + i)\} \in \mathfrak{T}_i$;
    \item The restriction $\Phi_{+\infty}:=\Phi|_{\mathfrak{T}_{\mathcal{O}}^{\downarrow} \times \{+\infty\}} \co \mathfrak{T}_{\mathcal{O}}^{\downarrow} \times \{+\infty\} \to \mathfrak{T}_i \setminus \{(+ i, + i)\}$ is surjective; 
    \item Consider the function $f$ defined by
    \[f\co(-\infty, \eta(B_{\mathcal{O}}))\to (-\infty, +\infty)\] 
    \[f(v) = v + \frac{1}{\eta(B_{\mathcal{O}}) - v} = \frac{v^2 - \eta(B_{\mathcal{O}}) v -1}{v - \eta(B_{\mathcal{O}})}   \,.\]
    Then $f$ is a strictly increasing homeomorphism. For every $z \in \C\mathbb{P}^1$ with $h_z < \eta(B_{\mathcal{O}})$, the $f$-\emph{uplift} of $z$ is the point $z^f := L^+_{\lambda}(z)$, where $\lambda = f(h_z)-h_z$. When $z = -i$, $z^f := -i$. In this way, $h_{z^f} = f(h_z)$.   
    \item When $z \in \C\mathbb{P}^1 \setminus \{i\}$, the fiber of $\Phi$ at the point $(i,z) \in \mathfrak{T}_i$ is the circle
       \[\Phi^{-1}(i, z) = \{\ (T,+\infty) \mid T \in \mathfrak{T}_{\mathcal{O}}^{\downarrow}, \ \  (v_T)^f = z \ \}\,.\]
       consisting of all the tetrahedra with a fixed vertex $v_T \in D_{c}$.
\end{enumerate}
\end{Theorem}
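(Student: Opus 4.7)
The plan is to take as definition $\Phi(T,s) := L_s(T)$ for finite $s$, where $L_s := L_s^+$ for $s\geq 0$ and $L_s := L_{-s}^-$ for $s\leq 0$, and to set $\Phi(T,\pm\infty)$ equal to the limit of $\Phi(T,s)$ as $s \to \pm\infty$, taken in the compact space $\mathrm{Lag}(\C^4) \cong \mathfrak{T}_{\overline{\HH^3}}$ via the homeomorphism $g$ of Theorem~\ref{thm:lagrange}. With this definition, properties~(1), (2), (3) and (4) fall out immediately: $(T,s) \mapsto L_s(T)$ is a continuous bijection from $\mathfrak{T}_{\mathcal{O}} \times \R$ onto $\mathfrak{T}_\ell$ with continuous inverse (read off the barycenter's signed distance to $\mathcal{O}$ and apply $L_{-s}$), $L_0 = \mathrm{Id}$, $L_s(\mathcal{O}) \in \ell^+$ for $s>0$, and the relation $\iota L_s^+ \iota = L_s^-$ gives property~(1) because $\iota$ fixes $\mathcal{O}$ pointwise while reversing the orientation of $\ell$.

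The heart of the proof is then to identify the limits $\lim_{s\to+\infty} L_s^+(T)$ in $\mathrm{Lag}(\C^4)$ for varying $T \in \mathfrak{T}_{\mathcal{O}}$, which reduces to an asymptotic Pl\"ucker-coordinate computation in the spirit of the second case of the continuity proof of Theorem~\ref{thm:lagrange}. I would write $g(T)$ as the $2$-plane in $\C^4$ spanned by cubic polynomials built from the vertex/dual-vertex pairs of $T$, apply the principal representation of $L_s^+ \in \PSL(2,\R)$, and track which Pl\"ucker coordinates dominate as $s \to +\infty$. For $T \in \mathfrak{T}_{\mathcal{O}}^{\uparrow}$, every vertex of $T$ lies in $\CP^1 \setminus \overline{D_{\mathcal{O}}}$ and is therefore bounded away from the repelling fixed point $-i$ of $L_s^+$; the four vertices are driven to $+i$ at comparable rates, and the analysis of the diagonal degeneration from the proof of Theorem~\ref{thm:lagrange} applies almost verbatim, yielding $\Phi(T,+\infty)=(i,i)$ and establishing property~(5).

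The critical case is $T \in \mathfrak{T}_{\mathcal{O}}^{\downarrow}$, where the bottom vertex $v_T \in D_{\mathcal{O}}$ has height $h := h_{v_T} < \eta(B_{\mathcal{O}})$, while the three upper vertices sit at heights that are, by the rigid geometry of a regular ideal tetrahedron centered on $\ell$, related to $\eta(B_{\mathcal{O}})$ by explicit constants controlled via the identity $d(c, A_c) = \ln\bigl(\tfrac{1}{2}(\sqrt{6}+\sqrt{2})\bigr)$ established in the proof of Proposition~\ref{unique}. Applying $L_s^+$ shifts all heights by $s$ along $\ell$, but the relative defect between the bottom vertex and the upper vertices is preserved. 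After rescaling the Pl\"ucker coordinates to absorb the fact that the three upper vertices approach $+i$ at a faster rate, the surviving datum detects the point $L_{\lambda}^+(v_T)$, where $\lambda$ is the asymptotic compensation needed to match the two rates. The delicate step is to show that $\lambda$ is exactly $1/(\eta(B_{\mathcal{O}})-h)$; once this is done, $\lambda$ coincides with $f(h)-h$, which produces the formula for $f$ in property~(7) and gives $\Phi(T,+\infty) = (i, v_T^f)$ as required by property~(6). This asymptotic ratio computation, and the extraction of the precise formula for $f$ from the regular-tetrahedron geometry, is the single step I expect to be genuinely technical.

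Once this limit is in hand, the remaining properties follow by routine checks: $v \mapsto v^f$ is visibly a strictly monotone homeomorphism from $D_{\mathcal{O}}$ onto $\CP^1 \setminus \{i\}$ (sending $-i$ to $-i$ and the boundary $C_{\mathcal{O}}$ to $+i$), so fixing $z = v_T^f$ pins down $v_T$ uniquely and the remaining one-parameter family of $T \in \mathfrak{T}_{\mathcal{O}}^{\downarrow}$ with that bottom vertex is exactly the circle parametrized by the angle $\theta_T$; all of these converge under $L_s^+$ to the same $(i,z)$ since the limit records only $v_T$, giving both the surjectivity in property~(6) and the circle-fiber description in property~(8). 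Continuity of $\Phi$ is automatic on $\mathfrak{T}_{\mathcal{O}} \times \R$, and continuity at $s = \pm\infty$ is precisely a uniform-in-$T$ version of the asymptotic analysis above, which goes through because the defect between the bottom and upper vertices depends continuously on $T$. Finally, the extension to $s = -\infty$ follows by symmetry from property~(1).
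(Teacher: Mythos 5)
Your proposal defines $\Phi(T,s) = L_s^{\pm}(T)$ for finite $s$ and then sets $\Phi(T,\pm\infty)$ equal to the limit in $\mathrm{Lag}(\C^4)$. This is not the map the paper constructs, and, more importantly, it cannot be: the limit $\lim_{s\to+\infty} L_s^+(T)$ is $(+i,+i)$ for \emph{every} $T \in \mathfrak{T}_{\mathcal{O}}$ with no vertex at $-i$, not $(+i, v_T^f)$ as you claim for $T\in\mathfrak{T}_{\mathcal{O}}^{\downarrow}$. To see this concretely, work in the coordinate where $+i = [1:0]$ and $-i = [0:1]$; the principal representation of $L_s^+$ scales the basis $X^3,X^2Y,XY^2,Y^3$ of $\C^{(3)}[X,Y]$ by $\lambda^{-3},\lambda^{-1},\lambda,\lambda^3$ with $\lambda = e^{s/2}$, so the Pl\"ucker coordinate $W_{3,4}$ of $L_s^+\cdot W$ scales by $\lambda^4$ while all others scale by lower powers. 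Hence as long as the original $W_{3,4}(g(T)) = c_1 d_2 - c_2 d_1 \neq 0$ (equivalently, $-i$ is not a vertex of $T$), the normalized Pl\"ucker vector converges to the one supported only on the $(3,4)$ coordinate, which is precisely $g((+i,+i))$. No ``rescaling to absorb the faster rate of the upper vertices'' changes this: the rates are read off from the fixed coefficients of a fixed $T$, and the top-weight coordinate wins. Your proposed $\Phi$ therefore collapses the entire slice $\mathfrak{T}_{\mathcal{O}} \times \{+\infty\}$ onto at most the two points $(+i,+i)$ and $(+i,-i)$, failing surjectivity onto $\mathfrak{T}_i$, and it is moreover discontinuous there, since along sequences $(T_n,s_n) \to (T_0,+\infty)$ with $T_0 \in \mathfrak{T}_{\mathcal{O}}^{\downarrow,3}$ one can tune $s_n$ against the approach of $v_{T_n}$ to $-i$ to produce any limit $(+i,z)$.

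This is exactly why the paper's construction is more elaborate. For $T\in\mathfrak{T}_{\mathcal{O}}^{\downarrow}$, the paper applies $L_s^+$ only up to the threshold $s = f(h_T) - h_T$, and beyond that switches to the auxiliary map $M_\lambda$, which pushes the barycenter along $\ell$ while \emph{keeping the bottom vertex fixed}. Under $M_\lambda$, only three vertices converge to $+i$ while one vertex stays pinned at $v_T^f$, which by the non-diagonal degeneration case of Theorem~\ref{thm:lagrange} gives the desired limit $(+i,v_T^f)$. The function $f$ in statement~(7) is then chosen so that the threshold $f(h_T)-h_T$ diverges as $T$ approaches $\mathfrak{T}_{\mathcal{O}}^{\uparrow}$, making the two regimes glue continuously. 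The use of $M_\lambda$, and the careful interleaving with $L_s^+$, is the actual content of the proof; it is not a technical refinement that can be replaced by the asymptotics of $L_s^+$ alone. (A smaller point: the distance $d(c,A_c)$ is $\ln\bigl(\tfrac12(\sqrt{6}-\sqrt{2})\bigr)$ in the paper, not $\ln\bigl(\tfrac12(\sqrt{6}+\sqrt{2})\bigr)$.)
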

\begin{proof}
For the proof we first construct 
\[\Phi^+ = \Phi|_{\mathfrak{T}_{\mathcal{O}} \times [0,  \infty)} \co (\mathfrak{T}_{\mathcal{O}} \times [0,  \infty)) \to \{T \in \mathfrak{T} \mid b_T \in \overline{\ell^+}\}\] 
with the property that for all $T \in \mathfrak{T}_{\mathcal{O}}$, $\Phi^+(T,0) = T$. Then, we will define
\[\Phi^- = \Phi|_{\mathfrak{T}_{\mathcal{O}} \times (-\infty, 0]} \co (\mathfrak{T}_{\mathcal{O}} \times (-\infty, 0]) \to \{T \in \mathfrak{T} \mid b_T \in \overline{\ell^-}\}\]
by the formula
\[\Phi^-(T,-s) = \iota(\Phi^+(\iota(T),s))\,.\]
The map $\Phi$ will be obtained by glueing $\Phi^+$ and $\Phi^-$.

\begin{figure}[htb]
\centering
\includegraphics[height=7cm]{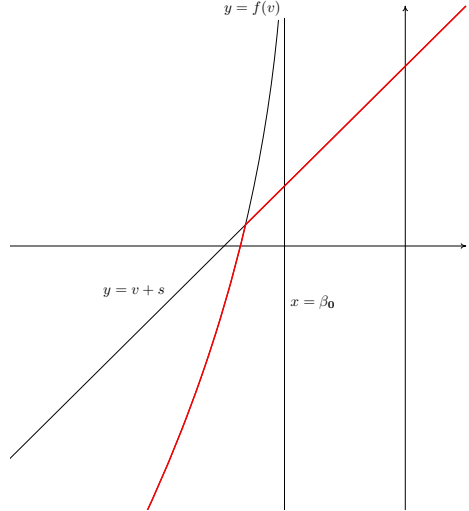}
\caption{The functions $y = v+s$, $y = f(v)$ and in red $y = \mathfrak{T}_s(v) = \mathrm{min}\{v+s, f(v)\}$ (in red).}
\label{f_s}
\end{figure}
  
We will now discuss the construction of $\Phi^+$. 
First of all we notice that property (7) is an easy computation. Moreover, $f$ satisfies the properties 
\begin{enumerate}
\item[(a)] $f(v) > v$.
\item[(b)] $\widehat{f}(v) := f(v)-v = \frac{1}{\eta(B_\mathcal{O})-v}$ is strictly increasing and tends to $+\infty$ when $v \to \eta(B_\mathcal{O})$.
\end{enumerate}   

We define $\Phi^+$ as follows:
\begin{itemize}
  \item If $(T, \infty)\in \mathfrak{T}_{\mathcal{O}}^{\uparrow} \times \{+\infty\}$, then $\Phi^+((T, \infty)) := (+i, +i)$.
  \item If $(T, \infty)\in \mathfrak{T}_{\mathcal{O}}^{\downarrow}\times \{+\infty\}$, then $\Phi^+((T, \infty)) := L^+_{f(h_T) - h_T}(v_T)$.
  \item If $(T, s)\in \mathfrak{T}_{\mathcal{O}}^{\uparrow}\times [0, +\infty)$, then $\Phi^+((T, s)) := L^+_{s}(T)$.
  \item If $(T, s)\in \mathfrak{T}_{\mathcal{O}}^{\downarrow} \times [0, +\infty)$, then $$\Phi^+((T, s)) := \left\{
      \begin{array}{ll}
        L^+_{s}(T)  & \mbox{if } s \leq f(h_T) - h_T \\
        M_{s - f(h_T) + h_T} \circ L^+_{f(h_T) - h_T}(T) & \mbox{if } s \geq f(h_T) - h_T,
      \end{array}
    \right.$$
\end{itemize} 
where the map $M_{\lambda}$ is the one defined before the theorem. In order to better understand the map $\Phi^+$, consider, for every $s \in [0, \infty]$, the map represented in Figure \ref{f_s}, defined by 
\[\rho_s\co(-\infty, \eta(B_\mathcal{O})\to (-\infty, \eta(B_\mathcal{O}) + s)\] 
  \[\rho_s(v) = \mathrm{min}\{v + s, f(v)\}= 
  \begin{cases} v + s &\text{ if } v+s \leq f(v) \\
    f(v) &\text{ if } v+s \geq f(v). 
   \end{cases}\,.\]
Note that for any $T \in \mathfrak{T}_{\mathcal{O}}^{\downarrow}$ and $s \in [0, +\infty)$, $\Phi^+((T, s))$ is a tetrahedron $T'$ with height $h_{T'} = \rho_s(h_T)$. 

We can now check that for any $t \in \mathfrak{T}_{\mathcal{O}}$ we have that $\Phi^+ (T, 0) = L^+_{0}(T) = T = \Phi^- (T, 0)$, so we can combine the maps $\Phi^+$ and $\Phi^-$ into the map $\Phi$ we wanted. This calculation also shows that $\Phi$ satisfies properties $(1)$ and $(2)$. From the definition, we can also see that $\Phi^+$ satisfies property $(5)$. 

\begin{Claim}  \label{claim:homeo}
The map $\Phi$ satisfies property (4), i.e. the restriction
    \[\Phi|_{\mathfrak{T}_{\mathcal{O}} \times (-\infty, +\infty)}\co \mathfrak{T}_{\mathcal{O}} \times (-\infty, +\infty) \to \mathfrak{T}_\ell\] 
    is a homeomorphism.
\end{Claim}
\begin{proof}
For the surjectivity, let $T \in \mathfrak{T}_{c_s}$, where $c_s \in \ell^+$. Let $s = \eta(c_s)\in [0, +\infty)$. 
\begin{itemize}
  \item If $T \in \mathfrak{T}_{c_s}^{\uparrow}$, let $\widehat{T}=L^{-}_s(T)\in \mathfrak{T}_{\mathcal{O}}^{\uparrow}$. Then $\Phi^+(\widehat{T}, s) = T$.
  \item If $T \in \mathfrak{T}_{c_s}^{\downarrow}\setminus \mathfrak{T}_{c_s}^{\downarrow,3}$, let $h = \rho_s^{-1}(h_T)$. We need to use the parametrization of $\mathfrak{T}_{c_s}^{\downarrow}$ described in the definition of the map $M_{\lambda}$. Let $\widehat{T}$ be the (unique) tetrahedra in $\mathfrak{T}_{\mathcal{O}}$ such that $h_{\widehat{T}}= h$ and such that $\theta_{\widehat{T}} = \theta_{T}$. Then again we can see that $\Phi^+(\widehat{T}, s) = T$.
  \item If $T \in \mathfrak{T}_{c_s}^{\downarrow,3}$, let $\widehat{T}=L^{-}_s(T)\in \mathfrak{T}_{\mathcal{O}}^{\uparrow}$. Then $\Phi^+(\widehat{T}, s) = T$.
\end{itemize} 
  
For the injectivity, let $T, T' \in \mathfrak{T}_{\mathcal{O}}$ and $s, s' \in [0, +\infty)$ such that $\Phi^+(T, s) = \Phi^+(T', s') = \widehat{T}$. Since the image is the same, the two image tetrahedra have the same barycenter, so $s = s'$. Also, since the map $\Phi^+$ does not change the ``type'' of the tetrahedra (that is $\mathfrak{T}_{c}^{\uparrow}$ or $\mathfrak{T}_{c_s}^{\downarrow}$), we have two cases: either $T, T' \in \mathfrak{T}_{\mathcal{O}}^{\uparrow}$ or $T, T' \in \mathfrak{T}_{\mathcal{O}}^{\downarrow}$. 
\begin{itemize}
  \item In the first case, the fact that $L_s^{+}$ is an isometry implies that $T = T'$.
  \item In the second case, we have that $h_T = h_{T'} = \rho_s^{-1}(h_{\widehat{T}})$. Now we have two possibilities: either $s \leq f(h_T) - h_T$ or $s \geq f(h_T) - h_T$. 
  \begin{enumerate}
    \item If $s \leq f(h_T) - h_T$, then we use again the fact that $L_s^{+}$ is injective to see that $T = T'$.
    \item If $s \geq f(h_T) - h_T$, then we use again the fact that the map $M_{s - f(h_T) + h_T} L_{f(h_T) - h_T}^{+}$ is injective to conclude that $T = T'$.
  \end{enumerate}
\end{itemize} 

Since all the maps we use are continuous, we only need to check the continuity at points in $\mathfrak{T}_{\mathcal{O}}^{\uparrow} \cap \overline{\mathfrak{T}_{\mathcal{O}}^{\downarrow}}$, that is at tetrahedra $T$ such that $h_T = \eta(B_\mathcal{O})$. Since $f(\eta(B_\mathcal{O})) = + \infty$, then we are always in the case $s \leq f(h_T) - h_T$, so $\Phi^+(T, s) = L^+_{s}(T)$.
\end{proof}

\begin{Claim} The map $\Phi$ satisfies property (6), i.e. 
the restriction $\Phi_{+\infty}:=\Phi|_{\mathfrak{T}_{\mathcal{O}}^{\downarrow} \times \{+\infty\}} \co \mathfrak{T}_{\mathcal{O}}^{\downarrow} \times \{+\infty\} \to \mathfrak{T}_i \setminus \{(+i,+i)\}$ is surjective.
\end{Claim}
This also shows, together with Claim \ref{claim:homeo}, that $\Phi$ is surjective.
\begin{proof}
Given a point $(i,z)\in \{+ i\} \times (\C\mathbb{P}^1\setminus \{+ i\})$, since the function $f$ is a homeomorphism, we can find a point $v \in \C\mathbb{P}^1$ such that $f(h_v) = h_z$ and such that $v^f = z$. Let $T \in \mathfrak{T}_{\mathcal{O}}$ be such that $v_T = v$. Then $T$ is necessarily in $\mathfrak{T}_{\mathcal{O}}^{\downarrow}$. By definition of the map $\Phi^+$, $\Phi^+(T) = (i,z)$.    
\end{proof}

Now, we only have to prove: 
\begin{Claim}
The map $\Phi^+$ is continuous.
\end{Claim}

\begin{proof}
The continuity on $\mathfrak{T}_{\mathcal{O}} \times [0,+\infty)$ was established in Claim \ref{claim:homeo}. The continuity on $\mathfrak{T}_{\mathcal{O}} \times \{+\infty\}$ is clear from the definition. So, in order to check the continuity of $\Phi^+$ it suffices to check the continuity for sequences $(T_n, s_n) \in \mathfrak{T}_{\mathcal{O}} \times [0, +\infty)$ such that $T_n \to T$ and $s_n \to +\infty$. We have two cases:
  \begin{enumerate}[(1)]
    \item If $T$ is in the interior part of $\mathfrak{T}_{\mathcal{O}}^{\uparrow}$, then we can assume that all the $T_n$ are also in $\mathfrak{T}_{\mathcal{O}}^{\uparrow}$. From the definition of the map, $$\Phi^+(T_n,s_n) = L^+_{s_n}(T_n)  \to (+i,+i) = \Phi^+(T,+\infty),$$ because all the $T_n$ are in $\mathfrak{T}_{\mathcal{O}}^{\uparrow}$. 
    \item If $T\in \mathfrak{T}_{\mathcal{O}}^{\downarrow}$, then we can assume that all the $T_n$ are also in $\mathfrak{T}_{\mathcal{O}}^{\downarrow}$.
     Since $T_n \to T$, when $n$ is big enough, we can assume that $h_{T_n}$ is close enough to $h_T$. 
     Hence, when $n$ is big enough, we can assume that $s_n \geq f(h_{T_n}) - h_{T_n}$.  From the definition, we have  $$\Phi^+(T_n, s_n) = M_{s_n - f(h_{T_n}) + h_{T_n}} \circ L^+_{f(h_{T_n}) - h_{T_n}}(T_n).$$ 
     This is a tetrahedron $T_n'$ with vertex $v_{T_n'}$ equal to $L^+_{f(h_{T_n}) - h_{T_n}}(v_{T_n})$. Hence, the sequence $\Phi^+(T_n, s_n)$ converges to $\Phi^+(T, +\infty) = L^+_{f(h_{T}) - h_{T}}(v_{T})$.   
    \item Finally, we assume that $T \in \partial \mathfrak{T}_{\mathcal{O}}^{\uparrow}$. If a subsequence of $T_n$ lies in $\mathfrak{T}_{\mathcal{O}}^{\uparrow}$, we can conclude that subsequence converges to $(i,i)$ as in part (1). Now let's assume that all the $T_n$s are in $\mathfrak{T}_{\mathcal{O}}^{\downarrow}$, and write  $T_n' = \Phi^+(T_n, s_n)$. Then, $h_{T_n} \to +\infty$ and also $f(h_{T_n}) - h_{T_n} \to +\infty$, hence, for big enough $n$, we can assume that $s_n$ is as big as we want, and $f(h_{T_n}) - h_{T_n}$ is as big as we want. From this we see that $h_{T_n'}$ becomes as big as we want, hence $T_n' \to (+i,+i)$.
  \end{enumerate}
\end{proof}
This concludes the proof of the theorem.
\end{proof}

On $\mathfrak{T}_{\mathcal{O}} \times [-\infty, +\infty]$, we consider the following equivalence relation: for $T,T' \in \mathfrak{T}_{\mathcal{O}}$ and $t,t' \in [-\infty, +\infty]$
\[ (T,t)  \sim (T',t')  \Leftrightarrow     
\begin{cases}
T,T' \in \mathfrak{T}_{\mathcal{O}}^{\uparrow}, t=t'=+\infty, \text{ or} \\
T,T' \in \iota(\mathfrak{T}_{\mathcal{O}}^{\uparrow}), t=t'=-\infty, \text{ or} \\
T,T' \in \mathfrak{T}_{\mathcal{O}}^{\downarrow}, v_T = v_{T'}, t=t'=+\infty, \text{ or} \\
T,T' \in \iota(\mathfrak{T}_{\mathcal{O}}^{\downarrow}), v_{\iota(T)} = v_{\iota(T')}, t=t'=-\infty, \text{ or} \\
T=T', t=t'
\end{cases}
\]

\begin{Corollary}
The map $\Phi$ from Theorem \ref{phi} descends to a homeomorphism
\[ \bar{\Phi}: \mathfrak{T}_{\mathcal{O}} \times [-\infty, +\infty] / \sim \ \longrightarrow \ \mathfrak{T}_{\overline{\ell}} \,.\]
\end{Corollary}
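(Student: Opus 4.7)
The plan is to apply the universal property of the quotient topology: show that $\Phi$ is constant on equivalence classes, so it descends to a continuous map $\bar\Phi$; check that $\bar\Phi$ is a bijection; and then conclude it is a homeomorphism by a standard compact-to-Hausdorff argument. All the heavy lifting has already been done in Theorem~\ref{phi}, so the corollary amounts to reading off fibers from properties~(1)--(8).

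First, I would verify that $\Phi$ is constant on each equivalence class of $\sim$. The case $T,T' \in \mathfrak{T}_{\mathcal{O}}^{\uparrow}$, $t=t'=+\infty$ is immediate from property~(5), which asserts $\Phi(\mathfrak{T}_{\mathcal{O}}^{\uparrow}\times\{+\infty\}) = \{(i,i)\}$. The analogous $-\infty$ case reduces to this one via the symmetry relation (1). The case $T,T' \in \mathfrak{T}_{\mathcal{O}}^{\downarrow}$ with $v_T = v_{T'}$ and $t=t'=+\infty$ follows from property~(8), which characterizes the fiber of $\Phi$ over $(i,z) \in \mathfrak{T}_i \setminus \{(i,i)\}$ as $\{(T,+\infty) : T \in \mathfrak{T}_{\mathcal{O}}^{\downarrow},\ (v_T)^f = z\}$; the $-\infty$ analogue again follows from (1), and the trivial case $T=T'$, $t=t'$ is clear.

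Next, I would check that $\bar\Phi$ is a bijection. Surjectivity is already contained in Theorem~\ref{phi}: the restriction $\Phi|_{\mathfrak{T}_{\mathcal{O}}\times(-\infty,+\infty)}$ is surjective onto $\mathfrak{T}_{\ell}$ by property~(4); $\Phi|_{\mathfrak{T}_{\mathcal{O}}^{\uparrow}\times\{+\infty\}}$ hits $(i,i)$ by (5) and $\Phi|_{\mathfrak{T}_{\mathcal{O}}^{\downarrow}\times\{+\infty\}}$ hits the rest of $\mathfrak{T}_i$ by (6); $\mathfrak{T}_{-i}$ is covered symmetrically using (1). For injectivity, the key preliminary observation is that the $t$-type is detected by the image: combining property (3) with (1), one sees that $\Phi(T,t) \in \mathfrak{T}_{\ell}$ iff $t \in (-\infty,+\infty)$, $\Phi(T,t) \in \mathfrak{T}_i$ iff $t=+\infty$, and $\Phi(T,t) \in \mathfrak{T}_{-i}$ iff $t=-\infty$. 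It therefore suffices to check injectivity within each $t$-type. For finite $t$, property~(4) gives a homeomorphism, so in particular injectivity. For $t=+\infty$, the preimage of $(i,i)$ is $\mathfrak{T}_{\mathcal{O}}^{\uparrow}\times\{+\infty\}$ (one equivalence class), and the preimage of $(i,z)$ with $z\neq i$ is the set in property~(8); since $f$ is a homeomorphism by property~(7), $(v_T)^f$ determines $v_T$, and the identifications recorded in $\sim$ exhaust this fiber. The $-\infty$ case is symmetric.

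Finally, for the homeomorphism conclusion, recall from Section~\ref{tetrahedra} that $\mathfrak{T}_{\mathcal{O}} \cong \mathrm{SO}(3)/A_4$ is a closed (hence compact) $3$-manifold, so $\mathfrak{T}_{\mathcal{O}} \times [-\infty,+\infty]$ is compact and so is its quotient $\mathfrak{T}_{\mathcal{O}} \times [-\infty,+\infty]/\sim$. The target $\mathfrak{T}_{\overline{\ell}} \subset \mathrm{Lag}(\C^4)$ inherits the Hausdorff topology of the Lagrangian Grassmannian. A continuous bijection from a compact space to a Hausdorff space is automatically closed, hence a homeomorphism, so $\bar\Phi$ is the desired homeomorphism. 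There is no essential obstacle here beyond the bookkeeping already carried out; the main technical content resides entirely in Theorem~\ref{phi}, and this corollary is the clean topological packaging of the fiber structure described there.
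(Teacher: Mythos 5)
Your proposal is correct and takes essentially the same approach as the paper's own proof: the paper likewise deduces well-definedness and bijectivity directly from the fiber description established in Theorem~\ref{phi}, then concludes with the compact-to-Hausdorff argument. Your version simply makes explicit which numbered properties of Theorem~\ref{phi} are invoked at each step, which the paper leaves implicit.
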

\begin{proof}
The description of the fibers of the map $\Phi$ given in Theorem \ref{phi} guarantees that the map descends to the quotient. It is continuous and onto because $\Phi$ is continuous and onto. It is 1-1, again from the description of the fibers. It is a homeomorphism because it is a bijective continuous map from a compact space to a Hausdorff space. This concludes the proof.
\end{proof}

\section{Topology of the fiber} \label{fiber}

In this section we continue the study of the fiber $F= q^{-1}(\mathcal{O})$ of the projection $q \co \Omega \to \HH^2$, where $\mathcal{O} = (0, 1)\in \C \times \R_{>0}$. In the end we will use the study of the topology of $F$ to describe the homeomorphism type of the smooth fiber $\mathfrak{F}$.
We start by analyzing the structure of $\mathfrak{T}_{\overline{\ell}}$ in a bit more detail.

\subsection{Singularities of the fiber $F$} \label{sub:fiber}
The space $F \cong \mathfrak{T}_{\overline{\ell}}$ is not a manifold. We will show in this subsection that it has four singular points, and all the other points have neighborhoods homeomorphic to $\R^4$. The four singular points are $(+i,+i), (+i,-i) \in \mathfrak{T}_i$ and $(-i,-i), (-i,+i) \in \mathfrak{T}_{-i}$. Two of them, $(+i,-i)$ and $(-i,+i)$, are `mild singularities' -- they are orbifold points with isotropy group $\mathbb{Z}_3$. The other two singular points, $(+i,+i)$ and $(-i,-i)$, are more complicated singularities, and a small neighborhood of these points looks like the cone over a closed $3$--manifold that is a Dehn filling of the trefoil knot. All such Dehn fillings are described in Moser \cite{Moser}. As a consequence, we will prove Corollary~\ref{cor:non-smooth fibration}, stating that the fibration $q$ is not a smooth map.  
%
%

We already know, by part (4) of Theorem~\ref{phi}, that $\mathfrak{T}_{\ell}$ is a manifold. We will now describe a neighborhood of the points of $\mathfrak{T}_i$ and $\mathfrak{T}_{-i}$. We only need to discuss $\mathfrak{T}_i$, because we have the orientation reversing homeomorphism $\iota$ that exchanges $\mathfrak{T}_{-i}$ with $\mathfrak{T}_i$.  

We first describe the `mild' singular points and the manifold points.

\begin{Proposition}
Every point $\mathfrak{T}_i$, except from the two points $(+i,+i)$ and $(+i,-i)$, has a neighborhood in $\mathfrak{T}_{\overline{\ell}}$ that is homeomorphic to $\R^4$. The point $(+i,-i)$ has a neighborhood in $\mathfrak{T}_{\overline{\ell}}$ that is homeomorphic to $\R^4$ modded out by a linear action of $\mathbb{Z}_3$.
\end{Proposition}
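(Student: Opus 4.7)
Plan. We analyze local models via the surjection $\Phi\colon \mathfrak{T}_{\mathcal{O}} \times [-\infty, +\infty] \to \mathfrak{T}_{\overline{\ell}}$ from Theorem~\ref{phi}, whose only nontrivial identifications occur at $s = \pm \infty$. Since the involution $\iota$ exchanges $\mathfrak{T}_i$ with $\mathfrak{T}_{-i}$ homeomorphically, it suffices to treat points in $\mathfrak{T}_i$. For any $p \in \mathfrak{T}_i$, a neighborhood of $p$ in $\mathfrak{T}_{\overline{\ell}}$ arises as the image under $\Phi$ of a neighborhood of the preimage $\Phi^{-1}(p) \subset \mathfrak{T}_{\mathcal{O}}^{\downarrow} \times \{+\infty\}$ inside the $4$-manifold-with-boundary $\mathfrak{T}_{\mathcal{O}}^{\downarrow} \times (a, +\infty]$.

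For a generic point $p = (+i, z)$ with $z \in \mathbb{CP}^1 \setminus \{+i, -i\}$, Theorem~\ref{phi} provides a unique $f$-preimage $v \in D_{\mathcal{O}} \setminus \{-i\}$, and $\Phi^{-1}(p)$ is the single $\theta_T$-circle of tetrahedra with $v_T = v$, contained in the smooth stratum $\mathfrak{T}_{\mathcal{O}}^{\downarrow} \setminus \mathfrak{T}_{\mathcal{O}}^{\downarrow, 3}$. Using the parametrization $(v_T, \theta_T)$ from the proof of Proposition~\ref{prop:upanddown}, a tubular neighborhood of this circle is homeomorphic to $D^2 \times S^1 \times (a, +\infty]$, where $D^2$ parametrizes $v_T$ near $v$, $S^1$ parametrizes $\theta_T$, and $(a, +\infty]$ is the time direction. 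The collapse at $s = +\infty$ identifies $\{v'\} \times S^1 \times \{+\infty\}$ with the single point $(+i, (v')^f)$ for each $v' \in D^2$, so the quotient is $D^2 \times \mathrm{Cone}(S^1) = D^2 \times D^2 \cong \RR^4$.

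For the special point $p = (+i, -i)$, Theorem~\ref{phi}(8) yields $\Phi^{-1}(p) = \mathfrak{T}_{\mathcal{O}}^{\downarrow, 3} \times \{+\infty\}$, the order-$3$ Seifert singular fiber of the solid torus $\mathfrak{T}_{\mathcal{O}}^{\downarrow}$. From the Seifert fibered structure of $\mathfrak{T}_{\mathcal{O}}$ recalled in Section~\ref{tetrahedra}, a neighborhood of this singular fiber admits the standard local model $(D^2 \times S^1)/\ZZ_3$, in which $\ZZ_3$ acts linearly by a rotation on the $D^2$ factor combined with a $1/3$-shift on the $S^1$ factor. The collapse formula $\Phi(T, +\infty) = (+i, v_T^f)$ depends only on $v_T$, a $\ZZ_3$-invariant datum (since the $\ZZ_3$-rotation of the top triangle fixes $v_T$), so it descends to the $\ZZ_3$-quotient. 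Applying the contraction argument of the generic case on the $\ZZ_3$-cover gives $D^2 \times D^2 \cong \RR^4$, and hence the neighborhood in $\mathfrak{T}_{\overline{\ell}}$ is $\RR^4/\ZZ_3$ with the induced linear $\ZZ_3$-action.

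The main obstacle is verifying that the Seifert-fibered neighborhood of $\mathfrak{T}_{\mathcal{O}}^{\downarrow, 3}$ truly exhibits this linear $\ZZ_3$-model and that the collapse at $s = +\infty$ commutes with the $\ZZ_3$-action. Concretely, lifting $\theta \in \RR/\tfrac{2\pi}{3}\ZZ$ to the $3$-fold cover $\RR/2\pi\ZZ$ and using the $(r, \phi, \theta)$-coordinates from the proof of Proposition~\ref{prop:upanddown}, one recognizes both the identification $\phi_1 + \theta_1 \equiv \phi_2 + \theta_2 \pmod{2\pi/3}$ at $r = 0$ and the collapse at $s = +\infty$ as arising from a single linear $\ZZ_3$-action on the lifted $4$-dimensional chart, whose weights on the tangent space at the orbifold point produce the asserted model.
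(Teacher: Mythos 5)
Your approach is genuinely different from the paper's: rather than lifting globally to the space $\mathfrak{T}_{\mathcal{O}}^{even}\cong SO(3)\cong T^1(\mathbb{S}^2)$ of even labelled tetrahedra and recognizing the collapsed space $\Psi(U)$ as an open subset of the double unit-disk bundle over $\mathbb{S}^2$ (a closed manifold), you do a local analysis with tubular neighborhoods and a cone construction on $\{s=+\infty\}$. For the generic point $p=(+i,z)$ this works: the preimage circle lives in the trivial part of the solid torus, and (after checking that restriction of $\Phi$ to a saturated open set is a quotient map, which follows from compactness of $\mathfrak{T}_{\mathcal{O}}\times[-\infty,+\infty]$) the quotient is $D^2\times \mathrm{Cone}(S^1)\cong\R^4$. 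The paper's global packaging is cleaner but the underlying picture is the same here.

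For the point $(+i,-i)$, however, your argument conflates two different circle fibrations, and this is a genuine gap, not merely a detail to verify. The circles that $\Phi$ collapses at $s=+\infty$ are the $\theta_T$-circles $\{v_T\}\times S^1$ of the parametrization from Proposition~\ref{prop:upanddown}, i.e.\ the fibers of $T^1(D_{\mathcal{O}})\to D_{\mathcal{O}}$ (labelled tetrahedra with fixed $v_1$), and \emph{not} the $\SO(2)$-orbits which give $\mathfrak{T}_{\mathcal{O}}$ its Seifert structure. The two families coincide only on the central fiber $\mathfrak{T}_{\mathcal{O}}^{\downarrow,3}$; away from it, an $\SO(2)$-rotation moves $v_T$, so its orbit is a ``diagonal'' curve transverse to the $\theta_T$-circles. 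When you pass to the $\mathbb{Z}_3$-cover, the Seifert product structure $(D^2\times S^1)$ and the $\theta_T$-product structure (equivalently, the paper's $T^1(D_{\mathcal{O}})$-trivialization) differ by a Dehn twist, and the $\mathbb{Z}_3$-action has different weights with respect to the two: weights $(1,1)$ if you collapse the Seifert $S^1$-fibers, weights $(0,1)$ if you collapse the $\theta_T$-circles. Since it is the latter that collapse, the correct induced linear action on $D^2\times D^2$ is the weight-$(0,1)$ one (trivial on the base disk, rotation on the cone), which is what the paper's labelled-tetrahedra lift makes manifest: on $T^1(D_{\mathcal{O}})$ the deck group $\mathbb{Z}_3\subset A_4$ rotates each $T^1$-fiber by $2\pi/3$ and fixes the base. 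Your sentence ``$\mathbb{Z}_3$ acts linearly by a rotation on the $D^2$ factor combined with a $1/3$-shift on the $S^1$ factor'' followed by ``applying the contraction argument of the generic case on the $\mathbb{Z}_3$-cover'' would produce the weight-$(1,1)$ quotient, which is a different space. Your final paragraph does flag that the $(r,\phi,\theta)$-coordinates and the identification at $r=0$ must be reconciled with the collapse, but the preceding argument already committed to the wrong product structure; the fix is to lift to $T^1(D_{\mathcal{O}})$ (or, in coordinates, to pass to $\psi=\phi+\theta$, $\chi=\phi$, in which the $\theta_T$-circles become honest $S^1$-factors and the central fiber becomes the core), and only then perform the cone construction and take the $\mathbb{Z}_3$-quotient.
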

\begin{proof}
A \emph{labelled tetrahedron} is a tuple $(T,v_1,v_2,v_3,v_4)$, where $T$ is a tetrahedron and $\{v_1,v_2,v_3,v_4\}$ is the set of vertices of $T$. We say that the labelling is \emph{even} if, when watching from the vertex $v_1$, the vertices $v_2,v_3,v_4$ appear in counter-clockwise cyclic order. The labelling is \emph{odd} otherwise. An even labelled tetrahedron is determined by its baricenter $b_T$ and the first two vertices $v_1, v_2$. The vertices $v_3$ and $v_4$ are determined by these data. 

We denote by $\mathfrak{T}_\mathcal{O}^{even}$ the set of all even labelled tetrahedra with barycenter in $\mathcal{O}$.  The group $A_4$ acts on $\mathfrak{T}_\mathcal{O}^{even}$ in the following way: if $\sigma \in A_4$, define
\[\sigma \cdot (T,v_1,v_2,v_3,v_4) = (T,v_{\sigma(1)},v_{\sigma(2)},v_{\sigma(3)},v_{\sigma(4)})\,.\]
We have a natural forgetful map
\[r : \mathfrak{T}_\mathcal{O}^{even} \ni (T,v_1,v_2,v_3,v_4) \ \longrightarrow \ T \in \mathfrak{T}_\mathcal{O} \]
that is $12 : 1$. This map identifies $\mathfrak{T}_\mathcal{O}$ with a quotient:
\[ \mathfrak{T}_\mathcal{O} = \mathfrak{T}_\mathcal{O}^{even}/A_4\,.\]

The space $\mathfrak{T}_\mathcal{O}^{even}$ is homeomorphic to $SO(3) \simeq \R\mathbb{P}^3 \simeq T^1(\mathbb{S}^2)$. To explicitly see the homeomorphism between   $\mathfrak{T}_\mathcal{O}^{even}$ and $T^1(\mathbb{S}^2)$, notice that we can see $v_1$ as a point of $\mathbb{S}^2$, identify the circle where the other three vertices lie with the tangent circle at $v_1$, and then see $v_2$ as a unit tangent vector to the point $v_1$. An interesting consequence is that $\mathfrak{T}_\mathcal{O} \simeq SO(3)/A_4$. 

We define the open subset $\mathfrak{T}_\mathcal{O}^{even, \downarrow}$ as
\[ \mathfrak{T}_\mathcal{O}^{even, \downarrow} = \{\ (T,v_1,v_2,v_3,v_4) \in \mathfrak{T}_\mathcal{O}^{even}  \ \mid \ v_1 \in D_{\mathcal{O}}\,\}. \]
The subset $\mathfrak{T}_\mathcal{O}^{even, \downarrow}$ is not preserved by the action of $A_4$. Only the subgroup of $A_4$ that fixes $1$ acts there. This subgroup is isomorphic to $\mathbb{Z}_3$. 

The restriction of $r$ to $\mathfrak{T}_\mathcal{O}^{even, \downarrow}$ gives a $3:1$ map
\[ r| : \mathfrak{T}_\mathcal{O}^{even, \downarrow} \ \longrightarrow \ \mathfrak{T}_\mathcal{O}^{\downarrow}\]
that identifies $\mathfrak{T}_\mathcal{O}^{\downarrow}$ with a quotient
\[\mathfrak{T}_\mathcal{O}^{\downarrow} = \mathfrak{T}_\mathcal{O}^{even, \downarrow}/\mathbb{Z}_3\,. \]

We now apply a construction called mapping cylinder to $\mathfrak{T}_\mathcal{O}^{even}$. The {\em mapping cylinder} of $p\co \mathbb{T}^1(\mathbb{S}^2) \to \mathbb{S}^2$ is the space 
\[M_p = T^{\leq 1}(\mathbb{S}^2) = \left((T^1(\mathbb{S}^2) \times [0,1]) \sqcup \mathbb{S}^2\right)/ \sim\,,\] 
where $\sim$ is defined by $(y, 0) \sim p(y)$. Note that $M_p$ corresponds to the unit disk bundle of $\bS^2$, and its boundary is the unit tangent bundle $\partial M_p \cong T^1(\mathbb{S}^2) \cong \R\mathbb{P}^3 \cong \mathrm{SO}(3)$. In particular, $M_p$ is a manifold with boundary.  

We now take its double, i.e. we glue two copies of $M_p$ along their boundary via the identity map: 
\[ M := M_p\sqcup_{id_\partial} M_p \]
The space $M$ is clearly a manifold, and it is not hard to see that it is indeed homeomorphic to the manifold $\bS^2 \times \bS^2$, even if we will not need this fact here. 


Now, it is also clear from the definition that we have a map
\[ \Psi:\mathfrak{T}_\mathcal{O}^{even} \times [-\infty,\infty]\ \longrightarrow\ M\,. \]
This map identifies $M$ with the quotient  
\[ M \simeq \mathfrak{T}_\mathcal{O}^{even} \times [-\infty,\infty] / \sim\,,\]
where the equivalence relation $\sim$ identifies all the labelled tetrahedra in $\mathfrak{T}_\mathcal{O}^{even} \times \{\infty\}$ that have the same vertex $v_1$, and, similarly, identifies all the labelled tetrahedra in $\mathfrak{T}_\mathcal{O}^{even} \times \{-\infty\}$ that have the same vertex $v_1$.  

Now, let's consider the following open subset of $\mathfrak{T}_\mathcal{O}^{even} \times [-\infty,\infty]$:
\[U = {\mathfrak{T}_\mathcal{O}^{even, \downarrow}} \times [-\infty,\infty] \subset \mathfrak{T}_\mathcal{O}^{even} \times [-\infty,\infty] \]
The image $\Psi(U)$ is an open subset of $M$, hence it is a manifold. 

Now let's consider again the $3:1$ map $r_|: {\mathfrak{T}_\mathcal{O}^{even, \downarrow}} \rightarrow \mathfrak{T}^{\downarrow}_{\mathcal{O}}$. This induces the map 
\[\Phi \circ (r_| \times \mathrm{Id}) : {\mathfrak{T}_\mathcal{O}^{even, \downarrow}} \times [-\infty, \infty] \rightarrow F\,,\]
where $\Phi$ is the map from Theorem \ref{phi}. The image of this map is an open subset $V$ of $F$ that contains $\mathfrak{T}^{deg^+} \setminus \{(+i,+i)\}$. Moreover, $V$ is homeomorphic with the quotient $\Psi(U)$ by the action of the group $\mathbb{Z}_3$. This shows that all the points of $\mathfrak{T}^{deg^+} \setminus \{(+i,+i),(+i,-i)\}$ are manifold points in $F$, and that the point $(+i,-i)$ is an orbifold point with group $\Z_3$.  
%
%
\end{proof}
       
We now describe a neigborhood of the singular point $(+i,+i)$.
\begin{Proposition}       
The point $(+i,+i)$ has a neighborhood in $\mathfrak{T}$ that is homeomorphic to the cone $C(M)$ over a closed $3$-manifold $M$, where $M$ is a Dehn filling of the complement of the trefoil knot.     
\end{Proposition}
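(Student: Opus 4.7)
The plan is to combine the parametrization of $F \cong \mathfrak{T}_{\overline\ell}$ provided by Theorem~\ref{phi} with the description of $\mathfrak{T}_\mathcal{O}$ from Proposition~\ref{prop:upanddown}. First I choose a compact submanifold $A \subset \mathfrak{T}_\mathcal{O}$ obtained by thickening the trefoil-knot exterior $\mathfrak{T}_\mathcal{O}^{\uparrow}$ by a small annular collar inside the solid torus $\mathfrak{T}_\mathcal{O}^{\downarrow}$; concretely, in the polar coordinates on $\overline{D_\mathcal{O}}$ centred at $-i$ used in the proof of Proposition~\ref{prop:upanddown}, I take $A = \{T \in \mathfrak{T}_\mathcal{O} : r_T \geq R - \varepsilon\}$. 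Then $A$ is homeomorphic to a compact trefoil-knot exterior, with torus boundary $\partial A \cong S^1_\phi \times S^1_\theta$, where $\phi$ denotes the polar angle of $v_T$ and $\theta = \theta_T$ is the Seifert parameter. The candidate neighbourhood is $V := \Phi(A \times [s_0, +\infty]) \subset F$, which is compact; using items~(5)--(8) of Theorem~\ref{phi} one checks that a sequence in $F$ can converge to $(+i,+i)$ only along points of the form $\Phi(T_n, s_n)$ with $s_n \to +\infty$ and either $T_n \in \mathfrak{T}_\mathcal{O}^{\uparrow}$ or $T_n \in \mathfrak{T}_\mathcal{O}^{\downarrow}$ with $v_{T_n} \to C_\mathcal{O}$; in both cases $T_n \in A$ eventually, so $V$ is a neighbourhood of $(+i,+i)$.

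The link $3$-manifold $M$ is identified as follows. By Theorem~\ref{phi}, the equivalence relation at $s = +\infty$ collapses $\mathfrak{T}_\mathcal{O}^{\uparrow} \times \{+\infty\}$ to $(+i,+i)$ and, for each $v_0 \in \overline{D_\mathcal{O}} \setminus \{-i\}$, collapses the circle $\{v_T = v_0\} \times S^1_\theta \subset \mathfrak{T}_\mathcal{O}^{\downarrow} \times \{+\infty\}$ to the point $(+i, v_0^f) \in \mathfrak{T}_i$. Restricting to $\partial A \times \{+\infty\}$, each $S^1_\theta$-fibre gets collapsed, so $\Phi(\partial A \times [s_0, +\infty])/{\sim}$ is the mapping cylinder of the projection $S^1_\phi \times S^1_\theta \to S^1_\phi$, hence a solid torus whose meridian disks are the cones $\Phi(\{v_0\} \times S^1_\theta \times [s_0,+\infty])/{\sim}$. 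I then take $M := \Phi(A \times \{s_0\}) \cup \Phi(\partial A \times [s_0, +\infty])/{\sim}$, glued along the common torus $\partial A$; this is the union of a trefoil-knot exterior and a solid torus glued along their torus boundaries, i.e.\ a Dehn filling of the trefoil-knot exterior along the slope given by the $S^1_\theta$-direction. This is the class of $3$-manifolds classified by Moser~\cite{Moser}.

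The final step is to realise $V$ itself as the cone $C(M)$. I plan to do so by introducing a continuous ``radial'' function $\varrho \colon V \to [0,1]$ with $\varrho^{-1}(0) = \{(+i,+i)\}$ and whose level sets $\varrho^{-1}(\delta)$ for $\delta \in (0,1]$ are all homeomorphic to $M$. On the $\mathfrak{T}_\mathcal{O}^{\uparrow}$-part, $\varrho$ is essentially $1/s$; on the collar it is interpolated with the radial coordinate $r_T$, so that the level $\{\varrho = \delta\}$ consists of a slightly smaller trefoil exterior $A_\delta$ at height $s = 1/\delta$, capped by the solid torus $\Phi(\partial A_\delta \times [1/\delta, +\infty])/{\sim}$ above it---itself a copy of $M$ by the above analysis. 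The main obstacle will be to arrange the interpolation so that the level sets sweep out $V \setminus \{(+i,+i)\}$ without overlap and fit together into a genuine cone structure $V \cong M \times [0,1]/(M \times \{0\})$; once this is done, combining the identification $V \cong C(M)$ with the description of $M$ as a Dehn filling of the trefoil-knot exterior completes the proof.
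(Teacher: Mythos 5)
Your proposal follows essentially the same strategy as the paper's proof: you thicken the trefoil exterior $\mathfrak{T}_\mathcal{O}^{\uparrow}$ by a small collar inside the solid torus $\mathfrak{T}_\mathcal{O}^{\downarrow}$ (the paper's set $\mathfrak{T}_\mathcal{O}^*$, defined via the smaller disc $D_{\beta_\mathcal{O}'}$, plays the role of your $A$), push this set up to $+\infty$ via $\Phi$ to get the candidate neighbourhood, and then identify its boundary as the union of a trefoil exterior and a solid torus glued along the common boundary torus, i.e.\ a Dehn filling of the trefoil exterior as in Moser. The paper asserts the cone structure on this neighbourhood as ``easy to see,'' whereas you flag the construction of a radial function realising $V \cong C(M)$ as the remaining obstacle; your plan for it (interpolating $1/s$ with the radial coordinate $r_T$ on the collar so that each level set is a copy of $A$ capped by a solid torus) is the natural way to make the paper's assertion precise, so you are not missing an idea, only the explicit bookkeeping.
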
   

Note that all the possible Dehn fillings of the trefoil knot are described in \cite{Moser}.

\begin{proof}
Using the notation of Section \ref{tetrahedra}, Let $\beta_\mathcal{O}' = \beta_\mathcal{O}-1$, and consider the disc $D_{\beta_\mathcal{O}'}$, an open disc in $\C\mathbb{P}^1$ contained in $D_{\beta_\mathcal{O}}$. 
We define the closed subset $\mathfrak{T}_\mathcal{O}^*$ of $\mathfrak{T}_\mathcal{O}$ as the set of tetrahedra in $\mathfrak{T}_\mathcal{O}$ such that all vertices are in $\C\mathbb{P}^1 \setminus D_{\beta_{c}'}$. This is a closed neighborhood of $\mathfrak{T}_\mathcal{O}^\uparrow$, and it is homeomorphic to $\mathfrak{T}_\mathcal{O}^\uparrow$. 

Now consider the set 
\[ U := \Phi(\mathfrak{T}_\mathcal{O}^* \times [0,+\infty]) \subset \mathfrak{T}\,,\]
where $\Phi$ is the map from Theorem \ref{phi}. The set $U$ is a closed neighborhood of $(+i,+i)$ in $F$, and it is easy to see that $U$ is a cone with center in $(+i,+i)$ over the boundary $\partial U$. We only need to prove that $\partial U$ is homeomorphic to a Dehn filling of the trefoil knot complement. 

The boundary $\partial U$ is the union of two pieces, $\Phi(\mathfrak{T}_\mathcal{O}^* \times \{0\})$ and $\Phi(\partial \mathfrak{T}_\mathcal{O}^* \times [0,+\infty])$. The first piece, $\Phi(\mathfrak{T}_\mathcal{O}^* \times \{0\})$ is homeomorphic to  $\mathfrak{T}_\mathcal{O}^*$ i.e. homeomorphic to $\mathfrak{T}_\mathcal{O}^\uparrow$, and by Proposition \ref{prop:upanddown} this is homeomorphic to the trefoil knot complement. The second piece is homeomorphic to a solid torus: indeed $\partial \mathfrak{T}_\mathcal{O}^*$ is a torus, 
$\Phi(\partial \mathfrak{T}_\mathcal{O} \times [0,+\infty))$ is homeomorphic to a torus times $[0,+\infty)$, and $\Phi(\partial \mathfrak{T}_\mathcal{O} \times \{+\infty\})$ is a circle that completes the solid torus. 

From this we can see that $\partial U$ is a Dehn filling of the trefoil knot complement. 
\end{proof}

\begin{Corollary}  \label{cor:non-smooth fibration}
The fiber bundle $q\co\Omega\to\HH^2$ is not smooth.
\end{Corollary}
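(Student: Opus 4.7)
The plan is to deduce non-smoothness of $q$ from the fact that its fiber $F = q^{-1}(\mathcal{O})$ fails to be a topological manifold at the four distinguished points described in the preceding propositions. Recall that if $q \co \Omega \to \HH^2$ were a smooth fiber bundle in the sense of smooth manifolds, then in particular $q$ would be a smooth submersion at every point, and the preimage $F = q^{-1}(\mathcal{O})$ of the regular value $\mathcal{O}$ would be a smooth embedded submanifold of the smooth manifold $\Omega \subset \Lag(\C^4)$. In particular $F$ would be a topological manifold.

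The two previous propositions show that $F \cong \mathfrak{T}_{\overline{\ell}}$ contains the point $T_0 := (+i,+i) \in \mathfrak{T}_i$ which admits a closed neighborhood $U$ homeomorphic to the cone $C(N)$ over a closed $3$-manifold $N$ obtained by Dehn filling the complement of a trefoil knot. Thus I would conclude the proof by showing that $T_0$ is not a manifold point of $F$. The key step is to observe that a cone $C(N)$ on a closed $3$-manifold $N$ is a topological $4$-manifold near its cone point if and only if $N$ is homeomorphic to the $3$-sphere $S^3$ (in which case the cone point has a Euclidean neighborhood). This is a standard fact that can be seen, for instance, from the local homology computation $H_k(C(N), C(N) \setminus \{T_0\}) \cong \widetilde H_{k-1}(N)$: if $T_0$ were a manifold point then these groups would agree with those of $\R^4$ at the origin, forcing $N$ to be a homology $3$-sphere, and then the link of $T_0$ would have to be simply connected to coincide with a topological $3$-sphere link.

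The main obstacle will therefore be to rule out that the particular Dehn filling $N$ arising here is homeomorphic to $S^3$. Moser's classification \cite{Moser} lists all Dehn fillings of the trefoil knot complement and identifies which slopes yield $S^3$: this occurs only for the trivial filling, giving $S^3$ back. In our setting the filling slope is determined by the glueing along $\partial \mathfrak{T}_{\mathcal{O}}^{\ast}$ produced by the map $\Phi$ of Theorem~\ref{phi}, which identifies each boundary circle $\Phi(\partial \mathfrak{T}_{\mathcal{O}}^{\ast} \times \{+\infty\})$ with an orbit of the vertex $v_T$ in $D_{\mathcal{O}}$. This slope is not the meridian of the trefoil (geometrically it is transverse to the Seifert fibration coming from the $\SO(2)$-rotation around $\ell$), so $N \not\cong S^3$ and $T_0$ cannot be a manifold point.

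Consequently $F$ is not a topological manifold, contradicting what a smooth submersion would produce. Hence $q$ is not smooth, proving the corollary. Alternatively, and even more cheaply, one may invoke the orbifold points $(+i,-i)$ and $(-i,+i)$, whose local model $\R^4/\mathbb{Z}_3$ with the linear $\mathbb{Z}_3$-action fixing only the origin is also not a topological manifold (its local homology at the fixed point differs from that of $\R^4$), giving a second family of singularities that equally obstruct smoothness of $q$.
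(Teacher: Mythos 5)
Your proof is correct in outline and arrives at the same contradiction as the paper, but the route differs in one interesting way and has one gap in the primary argument, which your fallback then repairs.

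The paper does not start from the hypothesis that $q$ is a smooth \emph{fiber bundle} (hence automatically a submersion); it starts from the weaker hypothesis that the map $q$ is merely a smooth map. It then invokes Sard's theorem to produce at least one regular value and uses the $\mathrm{SL}(2,\R)$-equivariance of $q$ together with the transitivity of the $\mathrm{SL}(2,\R)$-action on $\HH^2$ to promote that single regular value to \emph{all} of $\HH^2$, concluding that $q$ would be a submersion and $F$ a smooth manifold. This extra step buys a genuinely stronger conclusion — $q$ is not even a smooth map, not merely not a smooth bundle projection — and is short and slick. Your version skips Sard/equivariance by invoking the definition of a smooth fiber bundle directly; this is logically sound for the statement as worded, but proves less.

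Where your primary argument has a real gap: you assert that the Dehn filling $N$ arising at $(+i,+i)$ is not $S^3$ because ``the filling slope is not the meridian'' and is ``transverse to the Seifert fibration.'' Neither the paper nor your argument actually computes the filling slope from the construction in Theorem~\ref{phi}, so this assertion is unsupported; in fact, invoking Moser's classification here is only useful once you already know the slope. Moreover, the statement is somewhat circular: $N\cong S^3$ is equivalent to the cone point being a manifold point, which is what you are trying to disprove. Fortunately, your alternative route via the points $(+i,-i)$ and $(-i,+i)$ closes the gap cleanly: a quotient of $\R^4\cong\C^2$ by a free linear $\mathbb{Z}_3$-action away from the origin has link a lens space $L(3,q)$, whose nontrivial $H_1$ already shows (via the local homology $H_*(C(N),C(N)\setminus\{pt\})\cong\widetilde H_{*-1}(N)$) that the cone point is not a Euclidean point. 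That part of your argument is correct, self-contained, and in fact makes explicit something the paper's proof leaves implicit, namely \emph{why} the singular points obstruct $F$ being a manifold. I would drop the Dehn-filling discussion (or relegate it to a remark) and lead with the $\mathbb{Z}_3$-orbifold argument, and optionally add the paper's Sard-plus-equivariance step if you want the stronger conclusion that $q$ is not even a smooth map.
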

\begin{proof}
The map $q$ is $SL(2,\R)$-equivariant. If it were smooth, it would have some regular values, and by $SL(2,\R)$-equivariance all the values would be regular. Hence, it would be a submersion, and this would imply that the fiber would be a smooth manifold, which is impossible because it has four singular points.  
\end{proof}

\subsection{Cohomology of the fiber $F$} \label{sub:homology}

In this section we will study the cohomology of $\mathfrak{T}_{\overline{\ell}} \cong F$, and this will determine the cohomology for $\mathfrak{F}$. In particular, we will prove:
\begin{Proposition}\label{hom}
	$F$ is a a Poincar\'e duality space, it is simply connected and its homology is given by:
     \begin{itemize}
      \item $H^0(F;\Z) \cong \Z$;
      \item $H^1(F;\Z) = H^3(F;\Z) = 0$;
      \item $H^2(F;\Z) \cong \Z \oplus \Z$;
      \item $H^4(F; \Z) \cong \Z$;
      \item $H^i(F;\Z) =0$ for all $i > 4$,
    \end{itemize}
	Moreover, for each $i= 0, \ldots, 4$ there is a natural isomorphism $H^i(F; \Z) \cong H_{4-i}(F; \Z)$.
\end{Proposition}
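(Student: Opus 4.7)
The plan is to combine simple-connectedness of $F$ with Poincar\'e duality of $\Omega$ (established by Dumas--Sanders~\cite{Dumas_Sanders_1}) and a rational Mayer--Vietoris computation, in order to pin down all integral cohomology groups.

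Using the homeomorphism $\bar{\Phi}\co \mathfrak{T}_{\mathcal{O}} \times [-\infty, +\infty]/\sim \, \to F$ constructed just after Theorem~\ref{phi}, I cover $F$ by the open sets $U_\pm := \bar{\Phi}(\mathfrak{T}_{\mathcal{O}} \times I_\pm)$, where $I_+ = (-1, +\infty]$ and $I_- = [-\infty, 1)$. Each $U_\pm$ is an open mapping cylinder of the quotient map $\mathfrak{T}_{\mathcal{O}} \to \mathfrak{T}_{\pm i} \cong \C\mathbb{P}^1$, so it deformation retracts onto $S^2$ and is in particular simply connected; the intersection $U_+ \cap U_-$ deformation retracts to $\mathfrak{T}_{\mathcal{O}} \cong \SO(3)/A_4$, which is path connected. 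Van Kampen then gives $\pi_1(F) = 1$. Moreover, by Corollary~\ref{productF}, $F \simeq \Omega$, and the result of Dumas--Sanders~\cite{Dumas_Sanders_1} recalled in Section~\ref{sec:related} states that $\Omega$ is a Poincar\'e duality space of formal dimension $\dim_\R(G/Q) - 2 = 4$, a property which transfers to $F$.

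Next, I apply rational Mayer--Vietoris to the cover $F = U_+ \cup U_-$. Since $H^*(U_\pm;\mathbb{Q}) = H^*(S^2;\mathbb{Q})$, and $\SO(3)/A_4$ is a rational homology $3$-sphere (its fundamental group is the binary tetrahedral group, with abelianization $\Z/3$), one has $H^*(U_+\cap U_-;\mathbb{Q}) = H^*(S^3;\mathbb{Q})$, and the long exact sequence collapses to $H^*(F;\mathbb{Q}) = (\mathbb{Q},\,0,\,\mathbb{Q}^2,\,0,\,\mathbb{Q})$. To promote this to $\Z$-coefficients, note that $\pi_1(F) = 1$ forces $H_1(F;\Z) = 0$, hence $H^1(F;\Z) = 0$ and, by Poincar\'e duality, $H^3(F;\Z) \cong H_1(F;\Z) = 0$. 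The universal coefficient theorem then makes $H^2(F;\Z)$ torsion-free, so the rational computation yields $H^2(F;\Z) \cong \Z^2$. Finally $H^0(F;\Z) \cong H^4(F;\Z) \cong \Z$ by connectedness and PD, and the asserted isomorphisms $H^i(F;\Z) \cong H_{4-i}(F;\Z)$ are the Poincar\'e duality pairings themselves.

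The main obstacle I see is that one cannot read off the Betti numbers directly from \emph{integral} Mayer--Vietoris: since $H^2(\SO(3)/A_4;\Z) = \Z/3$, the connecting map $H^2(U_+)\oplus H^2(U_-) = \Z^2 \to \Z/3$ could a priori be either zero (leaving $3$-torsion in $H^3(F;\Z)$) or surjective, and deciding between these would require understanding the pullback of a Schubert generator along $q_\pm \co \SO(3)/A_4 \to \C\mathbb{P}^1$. The indirect route through simple-connectedness and Dumas--Sanders' Poincar\'e duality bypasses this computation.
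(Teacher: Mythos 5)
Your argument is correct, and your decomposition $F = U_+ \cup U_-$ coincides with the paper's cover $F = A \cup B$ (the parameter $s$ in the collar coordinate $\mathfrak{T}_{\mathcal{O}}\times[-\infty,+\infty]/\sim$ tracks the height $\eta(b_x)$ of the barycenter, so $U_\pm$ literally are the paper's $A,B$). The Van Kampen step and the identification $H^*(Y;\mathbb{Q}) \cong H^*(S^3;\mathbb{Q})$ via $H_1(\SO(3)/A_4)\cong\Z_3$ are the same as in the paper. Where you diverge is in two places. First, for the Poincar\'e duality claim the paper does not invoke Dumas--Sanders at all: it uses the fact that $F$ is homotopy equivalent to the smooth closed $4$-manifold $\mathfrak{F}$ (the fiber of the smooth fibration from Theorem~\ref{thm:TopologyQuotient}, via Corollary~\ref{productF}), so $F$ inherits Poincar\'e duality of formal dimension $4$ from a genuine manifold. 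This is more self-contained than the Dumas--Sanders route and avoids the mild issue that Dumas--Sanders' theorem is stated for $\PSL(n,\C)$ while we are in $\Sp(4,\C)$, so one would need to point out that their argument applies verbatim. Second, to pin the rank of $H^2$ you pass to rational coefficients and then promote via the universal coefficient theorem and torsion-freeness; the paper instead runs the \emph{integral} Mayer--Vietoris sequence and uses $H^3(F;\Z)=0$ (from PD and simple connectivity, exactly as you do) to conclude the surjectivity onto $H^2(Y;\Z)\cong\Z_3$, producing the short exact sequence $0\to H^2(F;\Z)\to\Z\oplus\Z\to\Z_3\to0$. Both give $H^2(F;\Z)\cong\Z^2$, but the integral version pays off later: the explicit identification of $H^2(F;\Z)$ with the index-$3$ sublattice $\{(n,m): n+m\equiv 0 \bmod 3\}\subset\Z^2$ is exactly what the paper uses in Section~\ref{sub:smooth} to compute the intersection form and prove $\mathfrak{F}\cong \CP^2\#\overline{\CP}^2$, so the ``obstacle'' you identify (and sidestep) is one the paper actually resolves, and needs to resolve, for the sequel. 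For Proposition~\ref{hom} on its own, your proof is complete and correct.
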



%

To prove Proposition~\ref{hom}, we  work again with $\mathfrak{T}_{\overline{\ell}}$. We will write $\mathfrak{T}_{\overline{\ell}}$ as a union of two open sets
\[ A := \{ x \in \mathfrak{T}_{\overline{\ell}} \ \mid\ \eta(b_x) \in (-1,+\infty] \}, \hspace{1cm} B := \{ x \in \mathfrak{T}_{\overline{\ell}} \ \mid\ \eta(b_x) \in [-\infty,+1) \}\,.\]
Let \[ Y := A \cap B \cong \mathfrak{T}_{\mathcal{O}} \times (-1, 1). \]
The first thing we have to prove is the following:

\begin{Proposition}\label{retract}
The open set $A$ deformation retracts to 
\[  \{ x \in A \ \mid\ \eta(b_x)= +\infty\} = \mathfrak{T}_i \cong \C\mathbb{P}^1 \,. \]
Similarly, $B$ deformation retracts to 
\[ \{ x \in B \ \mid\ \eta(b_x)= -\infty\}   = \mathfrak{T}_i \cong \C\mathbb{P}^1  \,. \]
\end{Proposition}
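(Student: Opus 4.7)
The plan is to lift the obvious strong deformation retraction of $(-1,+\infty]$ onto $\{+\infty\}$ through the quotient description $\mathfrak{T}_{\overline{\ell}}\cong (\mathfrak{T}_{\mathcal{O}}\times[-\infty,+\infty])/{\sim}$ supplied by the corollary following Theorem \ref{phi}. By the obvious symmetry between $\ell^+$ and $\ell^-$ (which is realized concretely by the involution $\iota$), it suffices to treat the case of $A$.

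First I would identify the preimage $\Phi^{-1}(A)$ explicitly. Chasing the construction of $\Phi$ in the proof of Theorem \ref{phi}, the barycenter of $\Phi(T,s)$ lies at the point of $\ell$ whose $\eta$-coordinate is $s$ (for finite $s$), and at $\pm i$ when $s=\pm\infty$. Hence $\eta(b_{\Phi(T,s)})=s$, and $A=\{x:\eta(b_x)\in(-1,+\infty]\}$ pulls back to $\mathfrak{T}_{\mathcal{O}}\times(-1,+\infty]$. Since $\Phi$ is a continuous surjection from a compact space to a Hausdorff space, it is a quotient map, and its restriction to the saturated open set $\mathfrak{T}_{\mathcal{O}}\times(-1,+\infty]\to A$ is again a quotient map.

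Next I would write down a strong deformation retraction upstairs. Choose any homeomorphism $\psi\co(-1,+\infty]\to[0,1]$ with $\psi(+\infty)=1$, and set
\[ \tilde H\co\bigl(\mathfrak{T}_{\mathcal{O}}\times(-1,+\infty]\bigr)\times[0,1]\to \mathfrak{T}_{\mathcal{O}}\times(-1,+\infty],\quad \tilde H\bigl((T,s),\tau\bigr)=\bigl(T,\psi^{-1}((1-\tau)\psi(s)+\tau)\bigr). \]
This map is manifestly continuous; $\tilde H_0=\mathrm{id}$; $\tilde H_\tau$ fixes the slice $\mathfrak{T}_{\mathcal{O}}\times\{+\infty\}$ pointwise for every $\tau$; and $\tilde H_1$ lands in $\mathfrak{T}_{\mathcal{O}}\times\{+\infty\}$. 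The final step is to descend $\tilde H$ to $A$: the restriction of $\sim$ to $\mathfrak{T}_{\mathcal{O}}\times(-1,+\infty]$ only identifies points with $s=+\infty$ (the $s=-\infty$ cases fall outside this range), while $\tilde H$ preserves the $T$-coordinate throughout and fixes the entire slice $\{s=+\infty\}$ pointwise. Hence $\tilde H$ is constant on $\sim$-classes and descends through the quotient map to a continuous homotopy $H\co A\times[0,1]\to A$. Using parts (5) and (6) of Theorem \ref{phi} to identify $\Phi(\mathfrak{T}_{\mathcal{O}}\times\{+\infty\})=\mathfrak{T}_i\cong\C\mathbb{P}^1$, the map $H$ is the desired strong deformation retraction of $A$ onto $\mathfrak{T}_i$. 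The claim for $B$ follows either verbatim, by replacing $+\infty$ with $-\infty$ and $(-1,+\infty]$ with $[-\infty,+1)$, or by transporting through the homeomorphism $\iota$.

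The only delicate point is the descent step, i.e.\ making sure that the candidate homotopy does not mix $\sim$-classes at $s=+\infty$ with classes at finite $s$. The explicit form of $\tilde H$ above is tailored precisely to avoid this: by leaving $T$ untouched and only reparameterizing the $s$-coordinate, compatibility with $\sim$ becomes immediate. No genuine topological obstruction arises, and the whole argument reduces to this bookkeeping check.
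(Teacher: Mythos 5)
Your argument is correct and in essence the one the paper gives: both retract $A$ onto $\mathfrak{T}_i$ by keeping the $\mathfrak{T}_{\mathcal{O}}$-coordinate supplied by Theorem~\ref{phi} fixed and pushing the transverse parameter monotonically to $+\infty$, the only difference being that you build the homotopy upstairs on $\mathfrak{T}_{\mathcal{O}}\times(-1,+\infty]$ and descend through the quotient map (which disposes of the continuity check cleanly), while the paper's proof defines it directly on $A$ via the inverse $\kappa$ of $\Phi|_{\mathfrak{T}_{\mathcal{O}}\times(-\infty,+\infty)}$ and asserts continuity at the locus $\mathfrak{T}_i$ without elaboration. One small slip worth correcting: $(-1,+\infty]$ is homeomorphic to the half-open interval $(0,1]$, not to $[0,1]$, so $\psi$ should be chosen as a homeomorphism onto $(0,1]$ with $\psi(+\infty)=1$; your formula for $\tilde H$ still keeps $(1-\tau)\psi(s)+\tau$ in $(0,1]$, so nothing else in the argument needs to change.
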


\begin{proof}
Denote by $\kappa = (\kappa_1,\kappa_2)$ the inverse of the map
\[\Phi|_{\mathfrak{T}_{\mathcal{O}} \times (-\infty, +\infty)}\co \mathfrak{T}_{\mathcal{O}} \times (-\infty, +\infty) \to \mathfrak{T}_\ell \]
which is an homeomorphism by Theorem \ref{phi}. So for $x \in A \setminus \mathfrak{T}_i$, $\kappa_1(x) \in \mathfrak{T}_{\mathcal{O}}$, $\kappa_2(x) \in (-1,+\infty)$, and $\Phi(\kappa_1(x),\kappa_2(x)) = x$. Here, $\kappa_2(x) = \eta(b_x)$.
We write the retraction as
\[H : A \times [0,+\infty] \rightarrow A \]
\[ H(x,t) := 
\begin{cases}
x & \text{ if } x\in \mathfrak{T}_i\\
\Phi(\kappa_1(x),\kappa_2(x)+t)  & \text{ if } x\in A \setminus \mathfrak{T}_i
\end{cases}
\]
It is easy to check that $H$ is a retraction by deformation, i.e. $H$ is continuous, $H(\cdot,0)$ is the identity on $A$, $H(x,+\infty) \in \mathfrak{T}_i$ and for all $x \in \mathfrak{T}_i, H(x,t) = x$. 
\end{proof}

We will also use the following version of Poincar\'e Duality to calculate the homology of the intersection $Y$. Note that we use the convention that homology and cohomology groups of negative dimension are zero, so the duality statement includes the fact that all the non-trivial homology and cohomology of $M$ lies in the dimension range from $0$ to $n$.

\begin{Theorem}[Poincar\'e Duality, see Hatcher {\cite[page 231]{hat_alg}}]\label{poincare}
  Let $M$ be a closed orientable $n$--manifold. Then:
  \begin{enumerate}
    \item $H_k(M;\Z)$ and $H^{n-k}(M;\Z)$ are isomorphic.
	\item Modulo their torsion subgroups, $H_k(M;\Z)$ and $H_{n-k}(M;\Z)$ are isomorphic.
    \item The torsion subgroups of $H_{k}(M;\Z)$ and $H_{n-k-1}(M;\Z)$ are isomorphic for $k = 0, \ldots, 4$.
  \end{enumerate}
\end{Theorem}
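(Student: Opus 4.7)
The plan is to prove Theorem~\ref{poincare} by establishing part (1) as the cap-product version of Poincar\'e duality and then deducing parts (2) and (3) from the Universal Coefficient Theorem, following the approach in Hatcher's \emph{Algebraic Topology}.

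First I would construct the fundamental class $[M] \in H_n(M;\Z)$. Since $M$ is a closed orientable $n$-manifold, a global orientation provides compatible local generators in $H_n(M, M \setminus \{x\};\Z) \cong \Z$ for every $x \in M$, and a standard compactness argument assembles these into a well-defined class $[M]$ that generates $H_n(M;\Z) \cong \Z$. This allows one to define the duality map
\[D_M \co H^k(M;\Z) \longrightarrow H_{n-k}(M;\Z), \qquad \alpha \longmapsto [M] \frown \alpha.\]
To prove that $D_M$ is an isomorphism, which is exactly part (1), I would follow the standard strategy: reformulate the statement for arbitrary open subsets $U \subset M$ using compactly supported cohomology $H^k_c(U;\Z)$ in place of $H^k(U;\Z)$; verify it directly for $U$ diffeomorphic to $\R^n$, where both sides are trivial except in complementary dimensions; and then extend by a Mayer--Vietoris induction and the Five Lemma to a finite good cover of $M$. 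Compactness of $M$ ensures that this induction terminates.

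Parts (2) and (3) then follow formally. The Universal Coefficient Theorem provides a (non-naturally) split short exact sequence
\[0 \to \mathrm{Ext}(H_{k-1}(M;\Z),\Z) \to H^k(M;\Z) \to \mathrm{Hom}(H_k(M;\Z),\Z) \to 0.\]
Since $M$ is a closed manifold it admits a finite CW structure, so all its homology groups are finitely generated; for such abelian groups $A$, the summand $\mathrm{Hom}(A,\Z)$ recovers the free part of $A$ while $\mathrm{Ext}(A,\Z)$ recovers the torsion part of $A$. Comparing free and torsion summands on the two sides of the isomorphism $H_k(M;\Z) \cong H^{n-k}(M;\Z)$ from part (1) immediately yields parts (2) and (3).

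The main technical obstacle is the Mayer--Vietoris induction in the proof of part (1): one must consistently work with compactly supported cohomology on open subsets, verify the relevant Mayer--Vietoris long exact sequence for $H^*_c$, and check compatibility with the ordinary Mayer--Vietoris for homology via naturality of the cap product. Once this bookkeeping is set up, the inductive gluing across a finite good cover is routine, and parts (2) and (3) are purely algebraic consequences requiring no further geometric input.
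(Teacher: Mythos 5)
The paper offers no proof of this statement: it is quoted as a classical theorem with a citation to Hatcher, and your outline reproduces essentially the argument given in that source --- cap product with the fundamental class $[M]$, duality for open sets via compactly supported cohomology and a Mayer--Vietoris induction, then the universal coefficient theorem to compare free and torsion parts --- so your route is correct and is in effect the proof the paper implicitly relies on. Two minor technical caveats are worth flagging. First, for a merely topological closed manifold a finite \emph{good} cover is not available off the shelf; Hatcher circumvents this by proving the duality statement for arbitrary open subsets of $\R^n$ via a direct-limit argument over convex covers before running the induction (for smooth manifolds, which is all the paper needs since $Y \simeq \SO(3)/A_4$ is smooth, your good-cover induction is fine). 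Second, your claim that a closed manifold admits a finite CW structure is stronger than what is known in dimension $4$; what the universal-coefficient step actually requires is only that the homology groups be finitely generated, which holds for any closed manifold (a compact ANR is homotopy equivalent to a finite complex), or which one can simply note in the smooth case. With these adjustments, parts (2) and (3) follow from part (1) exactly as you describe, since $H^{n-k}(M;\Z)$ splits (non-naturally) as the free part of $H_{n-k}(M;\Z)$ plus the torsion of $H_{n-k-1}(M;\Z)$.
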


\begin{proof}[Proof of Proposition \ref{hom}]
	The fact that $F$ is a Poincar\'e duality space follows from the fact that $F$ is homotopically equivalent to the smooth manifold $\mathfrak{F}$, since both  are homotopically equivalent to $\Omega$, as proven in Corollary \ref{productF} and Theorem \ref{thm:TopologyQuotient}. Since $F$ and $\mathfrak{T}_{\overline{\ell}}$ are homeomorphic, this also tells us that $\mathfrak{T}_{\overline{\ell}}$ is a Poincar\'e duality space. 
	
	For the second part of the result, we will study the homology and cohomology of $\mathfrak{T}_{\overline{\ell}}$, which will suffice to conclude. Through all the proof, we will use the decomposition above for $\mathfrak{T}_{\overline{\ell}} = A \cup B$ and $Y = A\cap B$. Using Proposition \ref{retract} and the definitions we can see the following homotopy equivalences:
	    \begin{itemize}
	      \item $A, B \simeq \bS^2$, and
	      \item $Y \simeq \mathrm{SO}(3)/A_4$.
	    \end{itemize}
	
	The simple connectivity of $\mathfrak{T}_{\overline{\ell}}$ follows from Seifert-Van Kampen theorem and the decomposition $\mathfrak{T}_{\overline{\ell}} = A \cup B$ described above.  Proposition \ref{retract} shows that the groups $\pi_1(A)$ and $\pi_1(B)$ are trivial, and hence that $\pi_1(\mathfrak{T}_{\overline{\ell}})$ is trivial as well. Since $\mathfrak{T}_{\overline{\ell}}$ is connected, this proves that $\mathfrak{T}_{\overline{\ell}}$ is simply connected. 
	

  
In order to compute the cohomology of $\mathfrak{T}_{\overline{\ell}}$, we will use Mayer-Vietoris sequence. We know the cohomology of $A$ and $B$:
  \begin{itemize}
    \item $H^0(A;\Z) \cong H^0(B;\Z) \cong \Z$;
	\item $H^2(A;\Z) \cong H^2(B;\Z) \cong \Z$;
    \item $H^i(A;\Z) = H^i(B;\Z) =0$ for $i = 1$ and for all $i > 2$.
  \end{itemize}  
In order to compute the cohomology of $Y$, we remember that it deformation retracts to $\mathrm{SO}(3)/A_4$, which is a Seifert Fiber manifold described in the second line of Table 10.6 in Martelli \cite{mar_ani} with $q = -2$. Hence the cohomology for $Y$ is:
    \begin{itemize}
    \item $H^0(Y;\Z)\cong \Z$;
    \item $H^1(Y;\Z) = 0$;
    \item $H^2(Y;\Z) \cong \Z_3$;
	\item $H^3(Y;\Z) \cong \Z$;
    \item $H^i(Y;\Z) =0$ for all $i > 3$.
  \end{itemize}
 Since $Y$ is connected we have $H_0(Y;\Z) \cong H^3(Y;\Z)\cong \Z$, and since it is a manifold, and hence a Poincar\'e duality space, we have that $H_3(Y;\Z) \cong H^0(Y;\Z) \cong \Z$. From Martelli \cite{mar_ani} we can see that $H_1(Y;\Z) \cong H^2(Y;\Z) \cong \Z_3$. Finally, again using Poincar\'e duality (see Theorem \ref{poincare} with $n = 3$), we can see that $H_2(Y;\Z) \cong H^1(Y;\Z)$ is free (because its torsion subgroup is isomorphic to the one of $H_0(Y;\Z)$), and that $H_2(Y;\Z) = H^1(Y;\Z) = 0$ (because $H_1(Y;\Z)\cong \Z_3$ and, modulo their torsion subgroups, $H_1(M;\Z)$ and $H_{2}(M;\Z)$ are isomorphic). 
 
 Now we are ready to compute the cohomology of $\mathfrak{T}_{\overline{\ell}}$. We have:
  \begin{itemize}
   \item $H^0(\mathfrak{T}_{\overline{\ell}};\Z)\cong \Z$;%
   \item $H^1(\mathfrak{T}_{\overline{\ell}};\Z) = H^3(\mathfrak{T}_{\overline{\ell}};\Z ) = 0$;
   \item $H^2(\mathfrak{T}_{\overline{\ell}};\Z) \cong \Z \oplus \Z$;
   \item $H^4(\mathfrak{T}_{\overline{\ell}};\Z) \cong \Z$;%
   \item $H^i(\mathfrak{T}_{\overline{\ell}};\Z) =0$ for all $i > 4$.
 \end{itemize}
Since $\mathfrak{T}_{\overline{\ell}}$ is connected, we have $H_0(\mathfrak{T}_{\overline{\ell}};\Z)\cong H^4(\mathfrak{T}_{\overline{\ell}};\Z) \cong \Z$. Using Theorem \ref{poincare}, we can see that $H_4(\mathfrak{T}_{\overline{\ell}};\Z)\cong H_0(\mathfrak{T}_{\overline{\ell}};\Z)\cong \Z$, that $H_1(\mathfrak{T}_{\overline{\ell}};\Z) = H_3(\mathfrak{T}_{\overline{\ell}};\Z) = H^1(\mathfrak{T}_{\overline{\ell}};\Z) = H^3(\mathfrak{T}_{\overline{\ell}};\Z) = 0$, and that $H_2(\mathfrak{T}_{\overline{\ell}};\Z) \cong H^2(\mathfrak{T}_{\overline{\ell}};\Z)$ is free abelian. We now use the following exact sequence coming from Mayer--Vietoris sequence to see that $H^2(\mathfrak{T}_{\overline{\ell}};\Z)\cong \Z \oplus \Z$. 
 \begin{equation}\label{eq2}
   0 = H^1(Y;\Z) \to H^2(\mathfrak{T}_{\overline{\ell}};\Z) \to H^2(A;\Z) \oplus H^2(B;\Z) \to H^2(Y;\Z) \to H^3(\mathfrak{T}_{\overline{\ell}};\Z) = 0.
 \end{equation}
 As we said, using the fact that $F$ is homeomorphic to $\mathfrak{T}_{\overline{\ell}}$ via the map $g^{-1}$, the result follows.
\end{proof}

\subsection{The homeomorphism type of $\mathfrak{F}$} \label{sub:smooth}

%
%
In this section we will prove the following result: 
\begin{Proposition}\label{fib}
  $\mathfrak{F}$ is homeomorphic to $\C\mathbb{P}^2 \# \overline{\C\mathbb{P}}^2$, and $F$ and $\mathfrak{T}_{\overline{\ell}}$ are homotopically equivalent to $\C\mathbb{P}^2 \# \overline{\C\mathbb{P}}^2$.
\end{Proposition}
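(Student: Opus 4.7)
The plan is to combine the homotopy equivalence $F \simeq \mathfrak{F}$ with Freedman's classification of simply connected closed topological $4$-manifolds, reducing the problem to a computation of the intersection form.

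First I would establish that $F$, $\mathfrak{T}_{\overline{\ell}}$, and $\mathfrak{F}$ are pairwise homotopy equivalent. Corollary~\ref{productF} gives $\Omega \simeq F$, Theorem~\ref{thm:TopologyQuotient} gives $\Omega \simeq \mathfrak{F}$, and Theorem~\ref{thm:lagrange} identifies $F$ with $\mathfrak{T}_{\overline{\ell}}$. Combined with Proposition~\ref{hom}, this exhibits $\mathfrak{F}$ as a simply connected closed smooth $4$-manifold with $b_2 = 2$. Freedman's theorem then reduces the homeomorphism classification of $\mathfrak{F}$ to its intersection form, since the Kirby--Siebenmann invariant vanishes for any smooth $4$-manifold. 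The list of rank-$2$ unimodular symmetric integral forms is very short, so it suffices to place the form among the four candidates $\mathrm{diag}(\pm 1,\pm 1)$ and the hyperbolic form $H$, corresponding respectively to $\C\mathbb{P}^2 \# \C\mathbb{P}^2$, $\overline{\C\mathbb{P}}^2 \# \overline{\C\mathbb{P}}^2$, $\C\mathbb{P}^2 \# \overline{\C\mathbb{P}}^2$ (twice), and $\bS^2 \times \bS^2$.

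Next I would compute the form on $F$, using that the cup-product pairing $H^2(F;\Z) \otimes H^2(F;\Z) \to H^4(F;\Z) \cong \Z$ is a homotopy invariant and so agrees with the intersection form on $\mathfrak{F}$. The two natural classes are the Poincar\'e duals $\alpha,\beta$ of the embedded $2$-spheres $\mathfrak{T}_i \subset A$ and $\mathfrak{T}_{-i} \subset B$, which by the Mayer--Vietoris computation in the proof of Proposition~\ref{hom} generate a finite-index sublattice of $H^2(F;\Z) \cong \Z^2$. Since $\mathfrak{T}_i$ and $\mathfrak{T}_{-i}$ are disjoint, the duals can be represented by cochains with disjoint supports, so $\alpha \cup \beta = 0$ and on this sublattice the form is diagonal $\mathrm{diag}(a,b)$. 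To pin down the signs of $a$ and $b$, I would exploit the involution $\iota$ coming from the reflection of $\HH^3$ in the plane $\mathcal{P}$: it preserves the geodesic $\overline{\ell}$, acts on $F$, and exchanges $\mathfrak{T}_i$ with $\mathfrak{T}_{-i}$. A local linearization at a smooth fixed point (e.g.\ a tetrahedron with barycenter $\mathcal{O}$ invariant under $\iota$) shows that $\iota$ reverses the orientation of $F$, so $\iota^* = -1$ on $H^4(F;\Z)$, and the identity $\iota^*(\alpha \cup \alpha) = \iota^*\alpha \cup \iota^*\alpha = \beta \cup \beta$ forces $b = -a$. In particular the intersection form has signature~$0$, ruling out the two definite forms, and $\mathfrak{F}$ must be either $\bS^2 \times \bS^2$ or $\C\mathbb{P}^2 \# \overline{\C\mathbb{P}}^2$.

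Finally, I would rule out $\bS^2 \times \bS^2$ by showing the form is odd, equivalently that $\mathfrak{F}$ is not spin. The natural approach is to exhibit an integral $2$-cycle with odd self-intersection, by smoothing the sphere $\mathfrak{T}_i \subset F$ near the two singular points $(+i,+i)$ and $(+i,-i)$ using the explicit local models of Section~\ref{sub:fiber} (a cone on a Dehn filling of the trefoil knot complement at $(+i,+i)$, and a $\Z_3$-orbifold singularity at $(+i,-i)$) and computing the Euler number of the resulting normal bundle in $\mathfrak{F}$. Unimodularity of the form on $\mathfrak{F}$ (from Poincar\'e duality on the smooth closed manifold $\mathfrak{F}$) combined with signature $0$ and oddness identifies the form as $\mathrm{diag}(1,-1)$, so Freedman's theorem gives $\mathfrak{F} \cong \C\mathbb{P}^2 \# \overline{\C\mathbb{P}}^2$; the homotopy equivalences established at the start then carry this conclusion to $F$ and $\mathfrak{T}_{\overline{\ell}}$. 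The main obstacle is precisely this last parity step: carrying out the smoothing near the cone singularity and rigorously extracting the odd self-intersection contribution, since $F$ itself is not a manifold there and the $\Z_3$ torsion in $H^2(Y;\Z)$ obstructs a naive use of the sphere classes $\alpha,\beta$ as an integral basis of $H^2(F;\Z)$.
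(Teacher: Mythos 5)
Your overall framework is the same as the paper's: reduce to the intersection form via the chain of homotopy equivalences $F \simeq \mathfrak{T}_{\overline{\ell}} \simeq \Omega \simeq \mathfrak{F}$, show signature $0$ using the orientation-reversing involution $\iota$, and invoke Freedman. The rank and signature steps are fine, and you correctly identify the subtlety that the two sphere classes $\alpha,\beta$ only span a finite-index sublattice of $H^2(F;\Z)$ because of the $\Z_3$ in $H^2(Y;\Z)$.

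The genuine gap is the parity step, which you acknowledge yourself: you propose to detect oddness by smoothing $\mathfrak{T}_i$ near the cone-on-a-Dehn-filling singularity and the $\Z_3$-orbifold point and then computing the normal Euler number in $\mathfrak{F}$, but you do not carry this out, and it would be delicate -- $F$ is not a manifold near those points, $\mathfrak{F}$ is only identified up to abstract homotopy equivalence, and there is no canonical embedded surface in $\mathfrak{F}$ to take a normal bundle of. The paper avoids this geometric computation entirely with an algebraic trick that you are one step away from. The Mayer--Vietoris map $\mu\colon H^2(\mathfrak{T}_{\overline{\ell}};\Z)\to H^2(A;\Z)\oplus H^2(B;\Z)\cong\Z\oplus\Z$ identifies $H^2(\mathfrak{T}_{\overline{\ell}};\Z)$ with the index-$3$ sublattice $\{(n,m): n+m\equiv 0 \pmod 3\}$, with $\Z$-basis $v=(2,1)$, $w=(1,2)$. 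The elements $2v-w=(3,0)$ and $2w-v=(0,3)$ are $Q$-orthogonal (they come from cycles supported in $A$ and $B$ respectively), and the involution sends one to $\pm$ the other while negating $Q$, so if $q$ denotes the self-pairing of $(3,0)$ then the self-pairing of $(0,3)$ is $-q$. Writing $Q$ as a symmetric matrix in the basis $v,w$ and imposing these three conditions gives a nonsingular linear system with unique solution $Q=\operatorname{diag}(q/3,-q/3)$; unimodularity of $Q$ on $H^2(\mathfrak{T}_{\overline{\ell}};\Z)$ then forces $q=\pm 3$ and hence $Q=\operatorname{diag}(1,-1)$. Parity (oddness) falls out automatically, with no smoothing argument needed. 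You should replace your proposed Euler-number computation with this sublattice-plus-unimodularity argument.
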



For the proof we need deep classification theorems of simply connected smooth $4$--manifolds due to Whitehead, Milnor, Milnor--Hausemoller, Freedman, Serre and Donaldson, which use their intersection form.  The \textit{intersection form} for a closed oriented $4$--manifold $N$ is the map
\[Q_N \co H^2(N; \Z) \times H^2(N; \Z) \to H^4(N; \Z) \to \Z\]
defined by $Q_N(\alpha, \beta) : =  (\alpha \smile \beta) [N],$ where $\alpha, \beta \in H^2(N, \Z)$ and  $\smile$ denotes the cohomological cup product of $\alpha$ and $\beta$ and $[N]\in H_4(N;\Z)$ is the fundamental class. 
See Scorpan \cite[Chap. 3]{sco_the} for a more detailed discussion. 

This definition of the intersection form only uses the cup product, and this is well defined for all topological spaces, including our singular space $\mathfrak{T}_{\overline{\ell}}$. In our proof, we want to compute the intersection form of the smooth $4$-manifold $\mathfrak{F}$, and we will do this by computing the cup product of the homotopically equivalent space $\mathfrak{T}_{\overline{\ell}}$. 

Recall the following definitions:
\begin{itemize}
	\item The intersection form $Q_N$ is called \textit{unimodular} if the matrix representing $Q_N$ is invertible over $\Z$.
	\item The \textit{rank} of $Q_N$ is defined as $\mathrm{rank}(Q_N):= \mathrm{dim}_\Z H^2(N; \Z)$.
	\item The \textit{signature} of $Q_N$ as $$\mathrm{sign}(Q_N): = \mathrm{dim}_\Z H^2_+(N; \Z) - \mathrm{dim}_\Z H^2_-(N; \Z),$$ where $H^2_+(N; \Z)$ (resp. $H^2_-(N; \Z)$) is defined as the maximal positive-definite (resp. negative-definite) subspace for $Q_N$.
	\item The \textit{definiteness} of $Q_N$ can be positive definite, negative definite or indefinite. We say that $Q_N$ is \textit{positive-definite} if for all non-zero $\alpha$, we have $Q_N(\alpha, \alpha)> 0$, and \textit{negative-definite} if for all non-zero $\alpha$, we have $Q_N(\alpha, \alpha) < 0$. If there exists classes $\alpha$, and $\beta$ such that $Q_N(\alpha, \alpha)> 0$ and $Q_N(\beta, \beta)< 0$, then $Q_N$ is called \textit{indefinite}.
	\item The \textit{parity} of $Q_N$ can be even or odd. We say that $Q_N$ is {\em even} if for all classes $\alpha$ we have $Q_N(\alpha, \alpha)$ is even. Otherwise we say that $Q_N$ is {\em odd}. 
\end{itemize}

As proven in Scorpan \cite[Sec. 3.2]{sco_the}, the intersection form $Q_N$ of a $4$--manifold is always unimodular. In addition, the intersection form satisfies the following properties: given two $4$--manifolds $N_1$ and $N_2$, we have
\begin{itemize}
	\item $Q_{\overline{N}} = -Q_N,$ where $\overline{N}$ is $N$ with opposite orientation.
	\item $Q_{N_1 \# N_2} = Q_{N_1} \oplus Q_{N_2}$, where $N_1 \# N_2$ is the connected sum of $N_1$ and $N_2$.
\end{itemize} 

\begin{Example}
	In Scorpan \cite[Sec. 3.2]{sco_the} one can see the details of the calculations of the intersection forms for  $\C\mathbb{P}^2 \# \overline{\C\mathbb{P}}^2$ and $\mathbb{S}^2 \times \mathbb{S}^2$ which are simply-connected $4$--manifolds with their intersection forms of rank $2$ and indefinite signature. The parity is odd for $\C\mathbb{P}^2 \# \overline{\C\mathbb{P}}^2$, and even for $\mathbb{S}^2 \times \mathbb{S}^2$. In fact, their intersection forms are give by:
	\begin{itemize}
\item $Q_{\C\mathbb{P}^2 \# \overline{\C\mathbb{P}}^2} = \begin{bmatrix}
1 & 0\\
0 & -1
\end{bmatrix}$,
		\item $Q_{\mathbb{S}^2 \times \mathbb{S}^2} = \begin{bmatrix}
0 & 1\\
1 & 0
\end{bmatrix}$.
	\end{itemize}
\end{Example}


We will use the following classification theorems of smooth $4$--manifolds, due to Serre, Freedman and Donaldson:
\begin{Theorem}[Serre, Freedman, Donaldson]
Two smooth simply-connected $4$--manifolds are homeomorphic if and only if their intersection forms have the same rank, signature, and parity.
\end{Theorem}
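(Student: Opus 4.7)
The plan is to split the statement into two implications and then assemble several deep classification results to handle the harder direction. The ``only if'' direction is immediate: a homeomorphism induces an isomorphism of cohomology rings compatible with the fundamental class, so it preserves the intersection form (up to a sign that can be absorbed if we fix orientations), hence it preserves rank, signature, and parity. All the work is in the converse.

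For the ``if'' direction, I would first dispose of the algebraic classification. Given two smooth simply-connected $4$-manifolds $N_1, N_2$ whose intersection forms $Q_{N_1}, Q_{N_2}$ have the same rank, signature, and parity, I want to promote this to an isometry $Q_{N_1} \cong Q_{N_2}$ of unimodular symmetric bilinear forms over $\Z$. In the indefinite case this is exactly the Hasse--Minkowski--Milnor--Hausemoller classification: an indefinite unimodular symmetric form over $\Z$ is determined up to isomorphism by its rank, signature, and parity (the odd forms are diagonalizable as $n\langle 1\rangle \oplus m\langle -1\rangle$, the even ones split as $a E_8 \oplus b H$ with $H$ the hyperbolic plane). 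In the definite case the algebra admits many non-isomorphic forms with the same invariants (e.g.\ $E_8$ versus $\langle 1\rangle^{\oplus 8}$), so here I would invoke Donaldson's diagonalizability theorem: the intersection form of a smooth closed simply-connected $4$-manifold, if definite, is isomorphic to $\pm\langle 1\rangle^{\oplus n}$. This again pins down the form by rank and sign. So in every case smoothness forces $Q_{N_1}\cong Q_{N_2}$.

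Next I would upgrade the form isomorphism to a homotopy equivalence and then to a homeomorphism. By Whitehead's theorem for simply-connected $4$-complexes, a simply-connected closed oriented $4$-manifold is determined up to (orientation-preserving) homotopy equivalence by its intersection form; in particular $N_1 \simeq N_2$. Finally I apply Freedman's topological classification of simply-connected $4$-manifolds: the homeomorphism type of a closed simply-connected topological $4$-manifold is determined by the pair $(Q, \mathrm{ks})$, where $Q$ is the intersection form and $\mathrm{ks}\in \Z/2$ is the Kirby--Siebenmann invariant, subject to the constraint $\mathrm{ks}\equiv \sigma(Q)/8 \pmod 2$ when $Q$ is even. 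Since $N_1$ and $N_2$ admit smooth structures, both Kirby--Siebenmann invariants vanish; since their forms are isomorphic, Freedman's theorem gives an orientation-preserving homeomorphism $N_1 \cong N_2$.

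The main obstacle is, of course, that the two key inputs, Freedman's theorem (building the topological $h$-cobordism and collaring wild Casson handles) and Donaldson's diagonalization theorem (using moduli spaces of anti-self-dual Yang--Mills connections), are extremely deep and would in no sense be reproved here; the plan is to invoke them as black boxes. The only genuinely original work is the bookkeeping to check that rank, signature, and parity indeed suffice in the definite case once one has Donaldson's constraint, and to confirm that the Kirby--Siebenmann obstruction vanishes for smooth manifolds so that Freedman's theorem gives a homeomorphism rather than merely an equivalence of intersection forms.
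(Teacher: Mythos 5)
Your outline is correct: the paper does not prove this statement at all but quotes it as a classical black-box result, and your assembly of the classification of indefinite unimodular forms (Serre, Milnor--Husemoller), Donaldson's diagonalization theorem in the definite case, and Freedman's classification together with the vanishing of the Kirby--Siebenmann invariant for smoothable manifolds is exactly the standard argument behind the attribution ``Serre, Freedman, Donaldson.'' The only caveat---which you already handle by fixing orientations---is that the signature is only well defined up to sign until orientations are chosen, the usual looseness in how this theorem is stated and used in the paper.
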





\begin{Theorem}[Freedman's Classification Theorem \cite{Freedman}] \label{thm - Freedman Classification}
   For any integral symmetric unimodular form $Q$, there is a closed simply-connected topological $4$--manifold that has $Q$ as its intersection form.
\begin{itemize}
  \item If $Q$ is even, there is exactly one such manifold.
  \item If $Q$ is odd, there are exactly two such manifolds, at least one of which does not admit any smooth structures.
\end{itemize}  
\end{Theorem}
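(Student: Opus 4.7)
The plan is to identify $\mathfrak{F}$ through the classification of smooth simply connected closed $4$-manifolds, by computing its intersection form on the homotopy equivalent but singular space $\mathfrak{T}_{\overline{\ell}}$. By Corollary \ref{productF} and Theorem \ref{thm:TopologyQuotient}, the four spaces $F$, $\mathfrak{T}_{\overline{\ell}}$, $\mathfrak{F}$ and $\Omega$ are pairwise homotopy equivalent; so once the homeomorphism type of $\mathfrak{F}$ is established, the homotopy statement for $F$ and $\mathfrak{T}_{\overline{\ell}}$ follows at once. Proposition \ref{hom} shows that $\mathfrak{F}$ is closed, smooth, oriented, simply connected, with $H^2(\mathfrak{F};\Z) \cong \Z^2$; hence, by the combined theorems of Serre, Freedman and Donaldson, the homeomorphism type of $\mathfrak{F}$ is determined by the rank, signature and parity of its intersection form $Q_{\mathfrak{F}}$. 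For rank $2$ the only candidates are $\mathrm{diag}(\pm 1, \pm 1)$ and the hyperbolic form $\bigl(\begin{smallmatrix} 0 & 1 \\ 1 & 0 \end{smallmatrix}\bigr)$, realized respectively by the $\C\mathbb{P}^2 \# (\pm)\C\mathbb{P}^2$ variants and by $\bS^2 \times \bS^2$; so it suffices to show that $Q_{\mathfrak{F}}$ is indefinite with signature $0$ and odd, which forces $Q_{\mathfrak{F}} = \mathrm{diag}(+1,-1)$ and hence $\mathfrak{F} \cong \C\mathbb{P}^2 \# \overline{\C\mathbb{P}}^2$.

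Since cup product is a homotopy invariant, I can compute $Q_{\mathfrak{F}}$ as the cup-product pairing on $H^2(\mathfrak{T}_{\overline{\ell}};\Z)$ paired with the top class of the Poincar\'e duality space $\mathfrak{T}_{\overline{\ell}}$. I would work with the two distinguished embedded spheres $\mathfrak{T}_i, \mathfrak{T}_{-i} \cong \C\mathbb{P}^1$, whose classes span a finite-index sublattice of $H_2(\mathfrak{T}_{\overline{\ell}};\Z) \cong \Z^2$ (the index coming from the $\Z_3$ occurring as $H^2(Y;\Z)$ in the Mayer--Vietoris sequence \eqref{eq2}). Their Poincar\'e duals $\alpha_+,\alpha_-$ form a rational basis of $H^2(\mathfrak{T}_{\overline{\ell}};\Q)$; the intersection matrix is diagonal because $\mathfrak{T}_i$ and $\mathfrak{T}_{-i}$ are disjoint subspaces of $\mathfrak{T}_{\overline{\ell}}$. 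To compute $\alpha_+^2$ I would use Theorem \ref{phi} to manufacture a push-off of $\mathfrak{T}_i$ into $\mathfrak{T}_\ell$ by replacing $t=+\infty$ with a large but finite value, and compute the self-intersection as an Euler-type contribution of the mapping-cylinder neighborhood of $\mathfrak{T}_i$ described in Section \ref{sub:fiber}. Finally, the orientation-reversing involution $\iota$ of Theorem \ref{phi} swaps $\mathfrak{T}_i$ with $\mathfrak{T}_{-i}$ and reverses the orientation of $\mathfrak{T}_{\overline{\ell}}$, yielding $\alpha_-^2 = -\alpha_+^2$; so up to a global choice of orientation the form is $\mathrm{diag}(+1,-1)$.

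The hard step is the self-intersection computation in the presence of the non-manifold singularity $(+i,+i) \in \mathfrak{T}_i$, described in Section \ref{sub:fiber} as a cone on a Dehn filling of the trefoil complement. Standard normal-bundle arguments do not apply at this point, so the Euler class of the ``normal disk bundle'' of $\mathfrak{T}_i$ must be computed by excising a small neighborhood of $(+i,+i)$, computing the Euler number of an honest disk bundle over $\bS^2 \setminus \{\mathrm{pt}\}$ built from the mapping-cylinder model $T^{\leq 1}(\bS^2)$ inside the double $M$ appearing in the proof that manifold points are generic, and then accounting for the framing discrepancy imposed by the specific Dehn filling that compactifies it. The second special point $(+i,-i) \in \mathfrak{T}_i$ is merely an orbifold $\Z_3$-point and contributes a multiplicative correction that explains the index-$3$ sublattice seen in Mayer--Vietoris, but does not obstruct obtaining an integral intersection form on $\mathfrak{F}$. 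Once this local computation establishes oddness and signature $0$, Freedman's theorem concludes $\mathfrak{F} \cong \C\mathbb{P}^2 \# \overline{\C\mathbb{P}}^2$, and the homotopy equivalences transfer this conclusion to $F$ and $\mathfrak{T}_{\overline{\ell}}$.
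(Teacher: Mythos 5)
There is a fundamental mismatch: the statement you were asked to prove is Freedman's Classification Theorem itself (Theorem \ref{thm - Freedman Classification}), which the paper does not prove but imports from \cite{Freedman}. It is a deep external result -- existence and uniqueness of closed simply connected topological $4$--manifolds realizing a given unimodular form, proved via topological surgery and Casson-handle techniques, with the two manifolds in the odd case distinguished by the Kirby--Siebenmann invariant (whence the non-smoothability of one of them). Your proposal does not prove, or even engage with, this statement: instead it sketches the derivation of Proposition \ref{fib} (that $\mathfrak{F}$ is homeomorphic to $\C\mathbb{P}^2 \# \overline{\C\mathbb{P}}^2$), and it does so \emph{by invoking} Freedman's theorem together with the Serre/Donaldson classification. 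As a proof of the stated theorem this is circular, and as written it establishes a different (downstream) result; no amount of computation of $Q_{\mathfrak{T}_{\overline{\ell}}}$ can substitute for the actual content of Freedman's theorem.

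Even judged as a proof of Proposition \ref{fib}, your route diverges from the paper's and has a genuine gap. The paper never attempts a geometric self-intersection count for the spheres $\mathfrak{T}_{\pm i}$; it computes $Q_{\mathfrak{T}_{\overline{\ell}}}$ purely algebraically, identifying $H^2(\mathfrak{T}_{\overline{\ell}};\Z)$ via Mayer--Vietoris with the kernel $\{(n,m) : n+m \equiv 0 \ (\mathrm{mod}\ 3)\} \subset \Z\oplus\Z$, using the $Q$--orthogonality of $(3,0)$ and $(0,3)$, the orientation-reversing involution, and unimodularity to pin down the matrix as $\mathrm{diag}(1,-1)$. Your proposed alternative -- pushing $\mathfrak{T}_i$ off to finite $t$ and extracting an Euler number from the mapping-cylinder model, with a ``framing discrepancy'' correction at the cone point $(+i,+i)$ -- is exactly where the argument would break down: at that point there is no normal disk bundle (the link is a Dehn filling of the trefoil complement, not $\bS^3$), so the self-intersection is not an Euler number plus an easily quantified correction, and you give no mechanism for computing the contribution of the singular point or for showing the resulting rational numbers are the integers $\pm 1$ rather than, say, multiples of $1/3$ coming from the orbifold point. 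The paper's unimodularity argument is precisely what circumvents this difficulty.
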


%

%

\begin{proof}[Proof of Theorem \ref{fib}]
	First, since $F$, $\mathfrak{T}_{\overline{\ell}}$ and $\mathfrak{F}$ are homotopically equivalent, their intersection forms are isomorphic. Since our description of $\mathfrak{T}_{\overline{\ell}}$ is more concrete, we will discuss $Q_{\mathfrak{T}_{\overline{\ell}}}$, and use that discussion to find the homeomorphism type of $\mathfrak{F}$.
	
	First, we know that the rank of $Q_{\mathfrak{T}_{\overline{\ell}}}$ is $2$, because $H^2(\mathfrak{T}_{\overline{\ell}}; \Z)\cong \Z \oplus \Z$.
	
	Second, since there is a self-homeomorphism $r$ of $\mathfrak{T}_{\overline{\ell}}$ reversing the orientation, and since the sign satisfies the following property: $\mathrm{sign}(Q_N) = -\mathrm{sign}(Q_{\overline{N}}),$ where $\overline{N}$ is $N$ with opposite orientation, we can see that the signature of the intersection form is $\mathrm{sign}(Q_{\mathfrak{T}_{\overline{\ell}}}) = 0$, hence $Q_{\mathfrak{T}_{\overline{\ell}}}$ is indefinite. 
	
	Let us show that $Q_{\mathfrak{T}_{\overline{\ell}}} = \begin{bmatrix}
1 & 0\\
0 & -1
\end{bmatrix}$ in an appropriate basis.
	

From the Mayer-Vietoris sequence in cohomology, we have
\begin{center}
\begin{tabular}{ccccccccc}
$H^1(Y;\Z)$\!\!\! & $\to$\!\! & $H^2(\mathfrak{T}_{\overline{\ell}};\Z)$\!\!\!                     & $\to$\!\! & $H^2(A;\Z) \oplus H^2(B;\Z)$\!\!\!       & $\to$\!\! & $H^2(Y;\Z)$\!\!\!       & $\to$\!\! & $H^3(\mathfrak{T}_{\overline{\ell}};\Z)$\\ 
$0$      & $\to$\!\! & $\mathbb{Z}\oplus\mathbb{Z}$ & $\to$\!\! & $\mathbb{Z}\oplus\mathbb{Z}$ & $\to$\!\! & $\mathbb{Z}_3$ & $\to$\!\! & $0$
\end{tabular}
\end{center}

Let $r$ be the orientation reversing involution of $\mathfrak{T}_{\overline{\ell}}$. We have that $r(A) = B$. The two maps $H^2(A;\Z) \to H^2(Y;\Z)$ and $H^2(B;\Z) \to H^2(Y;\Z)$ are either both zero or both non-zero. By the Mayer-Vietoris sequence, the map $H^2(A;\Z) \oplus H^2(B;\Z)\to H^2(Y;\Z)$ is onto, hence the two maps are both non-zero. We choose the generators of $H^2(A;\Z)$ and $H^2(B;\Z)$ such that both generators map to $1$ in $H^2(Y;\Z) = Z_3$. The map $\zeta\co H^2(A;\Z) \oplus H^2(B;\Z) \to H^2(Y;\Z)$ can then be written as
\[H^2(A;\Z) \oplus H^2(B;\Z) \ni (n,m) \longrightarrow \zeta(n,m) = n+m\ \ (\mathrm{mod}\ 3) \in H^2(Y;\Z) \,.\]
Hence the Mayer-Vietoris sequence and the injective map $\mu\co H^2(\mathfrak{T}_{\overline{\ell}}) \to H^2(A;\Z) \oplus H^2(B;\Z)$ identifies $H^2(\mathfrak{T}_{\overline{\ell}};\Z)$ with the subgroup
\[H^2(\mathfrak{T}_{\overline{\ell}};\Z) \cong \mathrm{Image}(\mu) = \mathrm{Ker}(\zeta) = \{\ (n,m)\in \mathbb{Z} \oplus \mathbb{Z} \ \mid\ n+m \equiv 0\ \ (\mathrm{mod}\ 3) \ \}\,. \]
A basis of $H^2(\mathfrak{T}_{\overline{\ell}};\Z)$ is given by the elements $v=(2,1)$ and $w=(1,2)$. We now express $Q_{\mathfrak{T}_{\overline{\ell}}}$ as a matrix in this basis:
\[ Q_{\mathfrak{T}_{\overline{\ell}}} = 
\begin{pmatrix}
x & z\\
z & y
\end{pmatrix}
\]
In order to compute $Q_{\mathfrak{T}_{\overline{\ell}}}$, we consider the elements $(3,0) = 2v-w$ and $(0,3) = 2w-v$. These two elements are $Q_{\mathfrak{T}_{\overline{\ell}}}$--orthogonal, because $A$ and $B$ retract to disjoint $2$--cycles in $H_2(\mathfrak{T}_{\overline{\ell}};\Z)$, and we have that $(3,0)$ maps to $0$ in $H^2(B;\Z)$ and $(0,3)$ maps to $0$ in $H^2(A;\Z)$. We have
\[Q_{\mathfrak{T}_{\overline{\ell}}}(2v-w,2w-v) = -2x - 2y + 5z = 0\]
\[Q_{\mathfrak{T}_{\overline{\ell}}}(2v-w,2v-w) =  4x +  y - 4z = q\]
\[Q_{\mathfrak{T}_{\overline{\ell}}}(2w-v,2w-v) =   x + 4y - 4z = -q,\]
where $q \in \mathbb{Z}$ is the norm of $(3,0)$. The norm of $(0,3)$ is then $-q$, because $r(3,0) = (0,\pm 3)$, and $r$ reverses the orientation. 

The determinant of the $3 \times 3$ matrix of the coefficients is $27 \neq 0$, hence the system of equations has at most one solution. An explicit solution is given by $x = \frac{q}{3}, y = -\frac{q}{3}, z=0$. Since $Q$ is unimodular, this implies $q = \pm 3$ and we conclude that
\[ Q_{\mathfrak{T}_{\overline{\ell}}} = 
\begin{pmatrix}
1 & 0\\
0 & -1
\end{pmatrix}~.
\]

Since $\mathfrak{T}_{\overline{\ell}}$ and $\mathfrak F$ are homotopy equivalent, we have proven that $\mathfrak F$ has the same intersection matrix as $\C\mathbb{P}^2 \# \overline{\C\mathbb{P}}^2$. Since $\mathfrak F$ is smooth, Freedman's classification theorem Theorem tells us that $\mathfrak F$ is homeomorphic to $\C\mathbb{P}^2 \# \overline{\C\mathbb{P}}^2$ (hence that $\mathfrak{T}_{\overline{\ell}}$ and $F$ are homotopy equivalent to $\C\mathbb{P}^2 \# \overline{\C\mathbb{P}}^2$).
\end{proof}

\bibliographystyle{amsalpha}
\bibliography{Biblio} 

\end{document}